\theoremstyle{definition}
\newtheorem{definition}{Definition}[section]
\theoremstyle{plain}
\newtheorem{Theorem}[definition]{Theorem}
\newtheorem{Proposition}[definition]{Proposition}
\newtheorem{Lemma}[definition]{Lemma}
\newtheorem{Corollary}[definition]{Corollary}
\theoremstyle{remark}
\newtheorem{remark}[definition]{Remark}
\newcommand{\R}{\mathbb R}  
\newcommand{\N}{\mathbb N}
\newcommand{\grad}{\mathrm{grad}}
\newcommand{\eps}{\varepsilon}
\newcommand{\vphi}{\varphi}
\title{A regularisation approach to causality theory for $C^{1,1}$-Lorentzian metrics}
\author{Michael Kunzinger\footnote{University of Vienna, Faculty of Mathematics, 
michael.kunzinger@univie.ac.at, roland.steinbauer@univie.ac.at, milena.stojkovic@live.com}, 
Roland Steinbauer\footnotemark[\value{footnote}], Milena Stojkovi\'c\footnotemark[\value{footnote}],
James A.\ Vickers\footnote{University of Southampton, School of Mathematics, J.A.Vickers@maths.soton.ac.uk}}
\begin{document}

\date{}

%\date{Received: date /Accepted: date}

\maketitle

\begin{abstract}

We show that many standard results of Lorentzian causality theory remain valid if the
regularity of the metric is reduced to $C^{1,1}$. Our approach is based on regularisations of 
the metric adapted to the causal structure.

\vskip 1em

\noindent
{\em Keywords:} Causality theory, low regularity.

\end{abstract}

\section{Introduction}

Traditionally, general relativity as a geometric theory has been formulated
for smooth space-time metrics. However, over the decades the PDE point of
view has become more and more prevailing. After all, general relativity as a
physical theory is governed by field equations and
questions of regularity are essential in the context of solving the initial
value problem. Already the classical local existence theorem for the vacuum
Einstein equations (\cite{CG69}) deals with space-time metrics in
$H^s_{\mbox{\small loc}}$ with $s>5/2$ (which merely guarantees the metric on
the spatial slices to be $C^1$) and more recent studies have significantly
lowered the regularity (\cite{KR05,M06,KRS12}).

Also from the physical point of view non-smooth solutions are of vital interest.
For example, one would like to study systems where different regions of
space-time have different matter contents, e.g. inside and outside a star, or in
the case of shock waves. On matching these regions the matter variables become
discontinuous, which via the field equations forces the differentiability of
the metric to be below $C^2$. E.g.\  a metric of regularity $C^{1,1}$ 
(continuously differentiable with locally Lipschitz first order
derivatives, often also denoted by $C^{2-}$) corresponds to finite jumps of the matter variables.
In the standard approach (\cite{L55}) one deals with metrics
which are piecewise $C^3$ but globally are only $C^1$. Even more extreme situations 
are exemplified by impulsive waves (e.g.\
\cite[Ch.\ 20]{GP09}) where the metric is still $C^3$ off the impulse but
globally is merely $C^0$.

On the other hand, in the bulk of the literature in general relativity it seems
to be assumed (sometimes implicitly) that the differentiability of the
space-time metric is at least $C^2$, especially so in the standard references on
causality theory. More precisely, the presentations
in \cite{P72,ON83,BEE96,Kriele,MS} generally (seem to) assume smoothness,
while \cite{HE,Seno1,Seno2,Chrusciel_causality} assume $C^2$-differentiability.
This mismatch in regularity---the quest for low regularity from physics
and analysis versus the need for higher regularity to maintain standard
results from geometry---has of course been widely noted, see
e.g.\ \cite{HE,MS,Clarke,Seno1,Seno2,Chrusciel_causality,SW} for a review of
various approaches to causal structures and discussions of
regularity assumptions. The background of this ``annoying problem''
(\cite[\S2]{Seno2}) is that for $C^2$-metrics the existence of totally normal (convex)
neighbourhoods is guaranteed. Furthermore, as emphasised by Senovilla,
$C^2$-differentiability of the metric is one of the fundamental assumptions of
the singularity theorems (see \cite[\S6.1]{Seno1} for a discussion of regularity 
issues in this context). Finally, in
\cite{Chrusciel_causality} it has recently been explicitly demonstrated that
assuming the metric to be $C^2$ allows one to retain many of the 
standard causality properties of smooth metrics. 

However, if one attempts to lower the differentiability of the metric below
$C^2$ one encounters serious problems. It \emph{is} possible to develop some of
the elements of causality theory in low regularity: E.g.,
smooth time functions exist on domains of dependence even for continuous metrics (\cite{FS11,CG}) and the
space of causal curves is still compact in this case (\cite{SW}). 
On the other hand it is well-known that some essential building
blocks of the theory break down for general $C^1$-metrics. Explicit
counterexamples by Hartman and Wintner, \cite{Hartman,HW} (in
the Riemannian case) show that for
connections of H\"older regularity $C^{0,\alpha}$ with $0<\alpha<1$ convexity
properties in small neighbourhoods may fail to hold. For example, radial
geodesics may fail to be minimising between any two points they contain. 
Also recently a study of the causality of continuous metrics in \cite{CG} has
revealed a dramatic failure of fundamental results of smooth causality:
e.g., light cones no longer need to be topological hypersurfaces of codimension
one. In fact, for any $0<\alpha<1$ there are metrics of regularity $C^{0,\alpha}$, called
`bubbling metrics', whose light-cones have nonempty
interior, and for whom the push-up principle ceases to hold (there exist
causal curves that are not everywhere null but for which there
is no fixed-endpoint deformation into a timelike curve).

For these reasons there has for some time been considerable interest in
determining the minimal degree of regularity of the metric for which standard
results of Lorentzian causality remain valid. A reasonable candidate is the
regularity class $C^{1,1}$ since it marks the
threshold where one still has unique solvability of the geodesic
equation, and the above remarks show that lower regularity will
in general prevent reasonable convexity properties. 
However, the main ingredient for studying local causality, the
exponential map, is now only locally Lipschitz and while  
it was well-known (\cite{Whitehead}) that it is a local homeomorphism, only
recently in \cite{KSS} it was shown to be in fact bi-Lipschitz. More precisely,
using approximation techniques and employing new methods of Lorentzian
comparison geometry (\cite{CleF}) it was shown in \cite[Th.\ 2.1]{KSS} that the
exponential map retains maximal regularity in the following sense:

%%%%%%%%%%%%%%%%%%%%%%%%%%%%%%%%%%%%%%%%%%%%%%%%%%%%%%%%%%%%%%%%%%%%%%%%%%%%%%%%%%%%%%%%%%%%%%%%%%%%%%%%%
%%%%%%%%%%%%%%%%%%%%%%%%%%%%%%%%%%%%%%%%%%%%%%%%%%%%%%%%%%%%%%%%%%%%%%%%%%%%%%%%%%%%%%%%%%%%%%%%%%%%%%%%%
\begin{Theorem}\label{mainpseudo}
  Let $M$ be a smooth manifold with a $C^{1,1}$-pseudo-Riemannian
  metric $g$ and let $p\in M$.  Then there exist open neighbourhoods
  $\tilde U$ of $0\in T_{p}M$ and $U$ of $p$ in M such that
\begin{equation*}
 \exp_{p}:\tilde U\rightarrow\ U
\end{equation*}
is a bi-Lipschitz homeomorphism.
\end{Theorem}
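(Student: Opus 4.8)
The plan is to prove this by a regularisation argument, exploiting the fact that a $C^{1,1}$-metric has $L^\infty$-bounded curvature: approximate $g$ by smooth metrics, establish \emph{uniform} bi-Lipschitz bounds for the associated smooth exponential maps on a common ball, and then pass to the limit. First I would record that the mere existence of a locally Lipschitz $\exp_p$ is immediate: fixing a chart around $p$ and writing the geodesic equation as a first-order system on $TM$, the right-hand side is locally Lipschitz because $\Gamma^k_{ij}\in C^{0,1}_{\mathrm{loc}}$, so Picard--Lindel\"of yields a unique geodesic flow depending locally Lipschitz-continuously on initial data. (A Lipschitz inverse-function theorem of Clarke type would in principle apply to $\exp_p$ directly, but checking nonsingularity of the generalised Jacobian at $0$ runs into exactly the a.e.-defined-curvature issues that the regularisation circumvents.) Next, mollify the metric components in the chart to obtain smooth pseudo-Riemannian metrics $g_\eps\to g$ with $g_\eps\to g$ in $C^1_{\mathrm{loc}}$ and, crucially, $\|\partial^2 g_\eps\|_{L^\infty(K')}\le\|\partial^2 g\|_{L^\infty(K)}$ on a slightly smaller coordinate neighbourhood $K'$ of $p$; this is where $C^{1,1}$-regularity (and not merely $C^1$) is used, and lower regularity would lose precisely this bound. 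Consequently the Christoffel symbols $\Gamma^\eps$ and curvatures $R^\eps$ are bounded uniformly in $\eps$ on $K'$, $\Gamma^\eps\to\Gamma$ uniformly, and $\exp^\eps_p\to\exp_p$ uniformly on a fixed ball around $0$.

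For each smooth $g_\eps$ one has $D\exp^\eps_p(0)=\mathrm{id}$ and $D\exp^\eps_p(v)w=J(1)$, where $J$ is the Jacobi field along the $g_\eps$-geodesic $\gamma^\eps_v$ with $J(0)=0$, $J'(0)=w$. Using an auxiliary Riemannian metric $h$ to measure everything: a Gronwall estimate on the geodesic equation gives $|\dot\gamma^\eps_v|_h\le 2|v|_h$ on $[0,1]$ for $|v|_h$ small, uniformly in $\eps$ (and likewise for $g$ itself), so the geodesics stay inside $K'$; a second Gronwall estimate on the Jacobi equation, whose coefficients are controlled by the uniform bounds on $\Gamma^\eps$ and $R^\eps$, then yields $\|D\exp^\eps_p(v)-\mathrm{id}\|\le C|v|_h$ with $C$ independent of $\eps$. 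Fixing $r_0:=1/(2C)$ and integrating along the segment from $w$ to $v$ inside $B_{r_0}(0)$ we obtain, for all small $\eps$,
\[
  \tfrac12|v-w|_h\;\le\;|\exp^\eps_p(v)-\exp^\eps_p(w)|_h\;\le\;\tfrac32|v-w|_h ;
\]
in particular each $\exp^\eps_p$ is a bi-Lipschitz diffeomorphism of $B_{r_0}(0)$ onto an open set, and the quantitative open-mapping estimate accompanying such a bound shows this set contains a fixed ball $B_{\rho_0}(p)$ with $\rho_0>0$ independent of $\eps$.

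Now let $\eps\to0$. The two-sided inequality passes to the limit, so $\tfrac12|v-w|_h\le|\exp_p(v)-\exp_p(w)|_h\le\tfrac32|v-w|_h$ on $B_{r_0}(0)$, whence $\exp_p$ is injective there and its inverse is Lipschitz. For surjectivity onto a neighbourhood of $p$, take $\delta>0$ small enough that $B_\delta(p)\subseteq B_{\rho_0}(p)$ and $2\delta<r_0$; given $q\in B_\delta(p)$, choose for each $\eps$ a point $v_\eps$ with $\exp^\eps_p(v_\eps)=q$, note $|v_\eps|_h\le 2|q-p|_h<2\delta<r_0$ by the lower Lipschitz bound, extract a convergent subsequence $v_\eps\to v\in B_{r_0}(0)$, and conclude $\exp_p(v)=q$ from the uniform convergence. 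Thus $\exp_p(B_{r_0}(0))\supseteq B_\delta(p)$. Setting $U:=B_\delta(p)$ and $\tilde U:=\exp_p^{-1}(U)\cap B_{r_0}(0)$ — both open — the restriction $\exp_p\colon\tilde U\to U$ is a continuous bijection for which the bi-Lipschitz estimate holds on all of $B_{r_0}(0)\supseteq\tilde U$, which is the claim.

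I expect the heart of the matter to be the uniform estimate $\|D\exp^\eps_p(v)-\mathrm{id}\|\le C|v|_h$ with $\eps$-independent $C$ — equivalently, the uniform lower bound $r_0>0$ on the size of the normal neighbourhood (in particular a uniform lower bound on the first conjugate distance) for the regularised metrics. This is a Jacobi-field, Rauch-type comparison estimate that hinges precisely on the uniform $L^\infty$-bound on $R^\eps$ furnished by $C^{1,1}$-regularity; the only extra care needed beyond the Riemannian case is that, $g$ being indefinite, the comparison must be run with respect to the auxiliary metric $h$ rather than $g$ itself, together with the bookkeeping that keeps the regularised geodesics inside the region $K'$ on which the uniform bounds are available.
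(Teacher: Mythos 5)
Your argument is correct and follows essentially the same strategy as the proof the paper relies on (this theorem is quoted from \cite[Th.\ 2.1]{KSS}): regularise $g$ so that $g_\eps\to g$ in $C^1$ with second derivatives bounded uniformly in $\eps$, deduce $\eps$-uniform bi-Lipschitz estimates for $\exp_p^{g_\eps}$ on a common ball from the resulting uniform bounds on $\Gamma^\eps$ and $R^\eps$, and pass to the limit. The only real difference is cosmetic: you obtain the uniform control of $T\exp_p^{g_\eps}$ by a direct Gronwall/Taylor estimate on the geodesic and Jacobi systems, whereas \cite{KSS} packages the same curvature-bound input via the Lorentzian comparison-geometry (injectivity radius) results of \cite{CleF}.
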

%%%%%%%%%%%%%%%%%%%%%%%%%%%%%%%%%%%%%%%%%%%%%%%%%%%%%%%%%%%%%%%%%%%%%%%%%%%%%%%%%%%%%%%%%%%%%%%%%%%%%%%%%
%%%%%%%%%%%%%%%%%%%%%%%%%%%%%%%%%%%%%%%%%%%%%%%%%%%%%%%%%%%%%%%%%%%%%%%%%%%%%%%%%%%%%%%%%%%%%%%%%%%%%%%%%
\medskip\noindent
It then follows from Rademacher's theorem that both $\exp_p$ and
$\exp_p^{-1}$ are differentiable almost everywhere. If $\exp_p: \tilde
U \to U$ is a bi-Lipschitz homeomorphism and $\tilde U$ is star-shaped
around $0$ we call $U$ a normal neighbourhood of $p$. If $U$ is a
normal neighbourhood of each of its elements then it is called totally
normal. In the literature (e.g., \cite{ON83}), totally normal sets are
also called convex sets. Any totally normal set $U$ is geodesically
convex in the sense that for any two points in $U$ there is a unique
geodesic contained in $U$ that connects them. Totally normal sets play
an important role in local causality theory, see Section
\ref{mainsec} below. The following result, proved in \cite[Th.\ 4.1]{KSS}
ensures that locally there always exist such neighbourhoods:
%%%%%%%%%%%%%%%%%%%%%%%%%%%%%%%%%%%%%%%%%%%%%%%%%%%%%%%%%%%%%%%%%%%%%%%%%%%%%%%%%%%%%%%%%%%%%%%%%%%%%%%%%
%%%%%%%%%%%%%%%%%%%%%%%%%%%%%%%%%%%%%%%%%%%%%%%%%%%%%%%%%%%%%%%%%%%%%%%%%%%%%%%%%%%%%%%%%%%%%%%%%%%%%%%%%
\begin{Theorem} \label{totally} Let $M$ be a smooth manifold with a
  $C^{1,1}$-pseudo-Riemannian metric $g$. Then each point $p\in M$
  possesses a basis of totally normal neighbourhoods.
\end{Theorem}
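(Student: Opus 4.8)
The plan is to carry out a low‑regularity version of Whitehead's classical argument, replacing the inverse function theorem step (unavailable, since $\exp$ is only locally Lipschitz) by Theorem~\ref{mainpseudo} together with a convexity argument for an auxiliary Riemannian distance function. First I would record the facts that make the $C^{1,1}$ setting manageable: since $g\in C^{1,1}$ the Christoffel symbols are locally Lipschitz, in particular continuous; hence by Picard--Lindelöf the geodesic equation has unique solutions, the geodesic flow depends locally Lipschitz‑continuously on time, base point and initial velocity \emph{jointly}, and every geodesic is actually $C^{2}$ (indeed $C^{2,1}$). In particular the ``total exponential map'' $E(q,v):=(q,\exp_q v)$ is locally Lipschitz near $(p,0)\in TM$.

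The key preparatory step is to upgrade Theorem~\ref{mainpseudo} to a statement that is \emph{uniform in the base point}: there are a neighbourhood $V$ of $p$, a fixed auxiliary Riemannian metric $h$, a radius $\delta>0$ and constants such that for every $q\in V$ the map $\exp_q$ is a bi‑Lipschitz homeomorphism from the $h$‑ball of radius $\delta$ about $0\in T_qM$ onto an open set containing the $h$‑ball $B^h_{c\delta}(q)$, with the Lipschitz constants of $\exp_q$ and $\exp_q^{-1}$ independent of $q\in V$. Equivalently, $E$ is a bi‑Lipschitz homeomorphism from a neighbourhood of $(p,0)$ in $TM$ onto a neighbourhood of $(p,p)$ in $M\times M$; here the only delicate point beyond Theorem~\ref{mainpseudo} is that $\exp_q^{-1}(y)$ is also Lipschitz in $q$, which follows by comparing $\exp_{q'}(v')=\exp_q(v)$ with $\exp_q(v')$ and using the joint Lipschitz dependence of the flow on the base point. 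I expect this uniformity to be the main obstacle: Theorem~\ref{mainpseudo} is stated for a single base point, whereas ``totally normal'' genuinely needs a whole neighbourhood of $(p,0)$ in $TM$. One resolves it either by inspecting the proof of Theorem~\ref{mainpseudo} --- the comparison‑geometry estimates of \cite{KSS,CleF} depend only on local $C^{1,1}$‑bounds of $g$, which are uniform on a compact neighbourhood of $p$ --- or by smooth approximation: approximate $g$ by smooth metrics $g_\eps\to g$ in $C^1_{\mathrm{loc}}$ with uniform local $C^{1,1}$‑bounds, apply the classical Whitehead theorem to each $g_\eps$ with a radius controlled only by those bounds, and pass to the limit using uniform convergence of the geodesic flows.

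With uniform control in hand I would fix the smooth Riemannian metric $h$ and the function $\varphi:=d_h(p,\cdot)^2$, which near $p$ is smooth with $\varphi(p)=0$, $d\varphi(p)=0$ and positive‑definite $h$‑Hessian at $p$. The coordinate identity $\tfrac{d^2}{dt^2}(\varphi\circ\gamma)=\bigl(\partial_i\partial_j\varphi-\partial_k\varphi\,\Gamma^k_{ij}\bigr)(\gamma)\,\dot\gamma^i\dot\gamma^j$, combined with the facts that $\partial_k\varphi$ is continuous and vanishes at $p$ while the $\Gamma^k_{ij}$ are continuous (hence locally bounded), yields $\rho>0$ such that $\varphi\circ\gamma$ is strictly convex along every $g$‑geodesic $\gamma$ staying in $B^h_\rho(p)$; this holds in the classical pointwise sense precisely because geodesics of a $C^{1,1}$‑metric are $C^2$. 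I would then pick $a>0$ so small that $U_a:=\{\varphi<a\}\subset V$ has $h$‑diameter small enough that (i) $U_a\subset B^h_{c\delta}(q)\subset\exp_q(\text{$\delta$-ball})$ for every $q\in U_a$, and (ii) every $g$‑geodesic joining two points of $U_a$ --- which, by the uniform exp‑map bounds, is of the form $t\mapsto\exp_q(tv)$ with $|v|_h$ small --- remains in $B^h_\rho(p)$. For such $a$ the strict convexity of $\varphi\circ\gamma$ forces these geodesics to stay inside $U_a$ itself and to be unique, and shows that $\tilde U_q:=\exp_q^{-1}(U_a)\cap(\text{$\delta$-ball})$ is star‑shaped about $0$, with $\exp_q$ restricting to a bi‑Lipschitz homeomorphism of $\tilde U_q$ onto $U_a$. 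Thus $U_a$ is a normal neighbourhood of each of its points, i.e.\ totally normal (hence geodesically convex, cf.\ \cite{ON83}); and since $U_a\subset B^h_{\sqrt a}(p)$, the family $\{U_a\}_{a>0}$ is a neighbourhood basis of $p$.
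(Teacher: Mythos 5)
The paper gives no proof of this theorem: it is imported verbatim from \cite[Th.\ 4.1]{KSS}, so there is nothing internal to compare against. Your reconstruction is nonetheless essentially the argument carried out there: a Whitehead-type convexity argument (an auxiliary function whose restriction to $g$-geodesics is strictly convex near $p$, which works in $C^{1,1}$ precisely because the $\Gamma^k_{ij}$ are continuous and geodesics are $C^2$), resting on a base-point-uniform version of Theorem \ref{mainpseudo}, i.e.\ on the total exponential map $E:v\mapsto(\pi(v),\exp_{\pi(v)}(v))$ being a homeomorphism from a neighbourhood of the zero section onto a neighbourhood of the diagonal --- a fact the present paper later quotes explicitly from the proof of \cite[Th.\ 4.1]{KSS}. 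You correctly identify this uniformity as the crux; your first proposed resolution (the comparison-geometry estimates of \cite{KSS,CleF} depend only on local $C^{1,1}$-bounds, hence are uniform over a compact set of base points, for the regularised metrics $g_\eps$ uniformly in $\eps$) is the one actually used. The only slight understatement is the remark that the ``only delicate point'' is Lipschitz dependence of $\exp_q^{-1}(y)$ on $q$: one also needs the domain radius $\delta$ and the bi-Lipschitz constants themselves to be uniform in $q$, which does not follow formally from the pointwise statement of Theorem \ref{mainpseudo}; but you acknowledge and address exactly this in the following sentence, so the gap is flagged rather than overlooked.
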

%%%%%%%%%%%%%%%%%%%%%%%%%%%%%%%%%%%%%%%%%%%%%%%%%%%%%%%%%%%%%%%%%%%%%%%%%%%%%%%%%%%%%%%%%%%%%%%%%%%%%%%%%
%%%%%%%%%%%%%%%%%%%%%%%%%%%%%%%%%%%%%%%%%%%%%%%%%%%%%%%%%%%%%%%%%%%%%%%%%%%%%%%%%%%%%%%%%%%%%%%%%%%%%%%%%
\noindent
The aim of this paper is to develop the key elements of causality theory for
$C^{1,1}$-Lorentzian metrics based on the above results as well as on refined
regularisation techniques, extending the approach of \cite{CG}, 
thereby demonstrating that indeed $C^{1,1}$ is the
minimal degree of regularity where a substantial part of 
smooth causality theory remains valid.

\medskip\noindent While we were in the final stages of preparing the
present paper we learned that an alternative approach to causality
theory for $C^{1,1}$-Lorentzian metrics by E.\ Minguzzi had recently
appeared in \cite{M}. This paper also establishes the fact
$\exp_p$ is a bi-Lipschitz homeomorphism, and in addition shows that
$\exp$ is a bi-Lipschitz homeomorphism on a neighbourhood of the zero-section
in $TM$ and is strongly differentiable over this zero section
\cite[Th.\ 1.11]{M}. In this work, the required properties of the
exponential map are derived from a careful analysis of the
corresponding ODE problem based on Picard-Lindel\"of approximations,
as well as from an inverse function theorem for Lipschitz maps. In
\cite{M} the author also goes on to establish the Gauss Lemma and to develop
the essential elements of $C^{1,1}$-causality, thereby obtaining many
of the results that are also contained in the present work, some even in
greater generality. 

\medskip\noindent
Nevertheless, we believe that our approach is of interest, 
and that in fact the approach in \cite{M} and ours nicely complement
each other, for the following reasons: Our methods are a direct
continuation
of the regularisation approach of P.\ Chrusciel and J.\ Grant
(\cite{CG}) and are completely independent from those employed in \cite{M}.
The basic idea is to approximate a given metric of low regularity
(which may be as low as $C^0$) by two
nets of smooth metrics ${\check g}_\epsilon$ and ${\hat g}_\epsilon$
whose light cones sandwich those of $g$. We then continue the line of
argument of \cite{CG,KSS} to establish the key results of causality theory
for a $C^{1,1}$-metric (thereby answering a corresponding question
in \cite{CG} which mainly motivated this work, namely whether the results 
of \cite{Chrusciel_causality} remain true for $C^{1,1}$-metrics). 
The advantage of these methods is that they quite easily adapt to
regularity below $C^{1,1}$, which as far as we can see is the natural
lower bound for the applicability of those employed in \cite{M}.
As an example, we note that the push-up lemmas from \cite{CG},
cf.\ Prop.\ \ref{push-up1} and \ref{nonnull} below, in fact even hold for 
$C^{0,1}$-metrics (or, more generally, for causally plain $C^0$-metrics),
whereas the corresponding results in \cite[Sec.\ 1.4]{M} require the
metric to be $C^{1,1}$.

\medskip\noindent
Furthermore, although considerable work still
needs to be done, we believe that the regularisation approach adopted
here, together with methods from Lorentzian comparison geometry as used
in \cite{CleF} and \cite{KSS}, will allow us to address some of the
other results required (such as curvature estimates, variational properties
of curves, and existence of focal points) in order to establish singularity
theorems for $C^{1,1}$-metrics, where so far only limited results are
available (\cite{Seno1}). Indeed, we note that the relevance of the  
kind of approximation techniques advocated in \cite{CG,KSS} for such 
questions was already pointed out in \cite[Sec.\ 8.4]{HE}.

\medskip\noindent The plan of the paper is as follows. In section 2 we
introduce the regularisation techniques
and show how they may be applied to establish the Gauss Lemma
(Theorem \ref{Gauss}) for a $C^{1,1}$-pseudo-Riemannian metric. Section 3 deals
with the key elements of $C^{1,1}$-causality theory and in Theorem \ref{lcb}
we again use regularisation methods to show that the local causal
structure is given by the image of the null cone under the exponential
map. This is then used to show
that if a causal curve from $p$ ends at a point in $\partial J^+(p)$
then it is a null geodesic. We then go on to deduce the basic
elements of causality theory using standard methods. Finally in
section 4 we refer to the results of \cite{CG} to show that all the
major building blocks are in place to follow the $C^2$-proofs as given
in \cite{Chrusciel_causality} to establish those elements of causality
theory that do not rely on continuity of the curvature.

%%%%%%%%%%%%%%%%%%%%%%%%%%%%%%%%%%%%%%%%%%%%%%%%%%%%%%%%%%%%%%%%%%%%%%%%%%%%%%%%%%%%%%%%%%%%%%%%%%%%%%%%%
%%%%%%%%%%%%%%%%%%%%%%%%%%%%%%%%%%%%%%%%%%%%%%%%%%%%%%%%%%%%%%%%%%%%%%%%%%%%%%%%%%%%%%%%%%%%%%%%%%%%%%%%%
%%%%%%%%%%%%%%%%%%%%%%%%%%%%%%%%%%%%%%%%%%%%%%%%%%%%%%%%%%%%%%%%%%%%%%%%%%%%%%%%%%%%%%%%%%%%%%%%%%%%%%%%%
\section{Regularisation techniques}\label{mainsec}
%%%%%%%%%%%%%%%%%%%%%%%%%%%%%%%%%%%%%%%%%%%%%%%%%%%%%%%%%%%%%%%%%%%%%%%%%%%%%%%%%%%%%%%%%%%%%%%%%%%%%%%%%
%%%%%%%%%%%%%%%%%%%%%%%%%%%%%%%%%%%%%%%%%%%%%%%%%%%%%%%%%%%%%%%%%%%%%%%%%%%%%%%%%%%%%%%%%%%%%%%%%%%%%%%%%
%%%%%%%%%%%%%%%%%%%%%%%%%%%%%%%%%%%%%%%%%%%%%%%%%%%%%%%%%%%%%%%%%%%%%%%%%%%%%%%%%%%%%%%%%%%%%%%%%%%%%%%%%
Throughout this paper we assume $M$ to be a $C^\infty$-manifold and only lower
the regularity of the metric. This is no loss of generality since any
$C^k$-manifold with $k\ge 1$ possesses a unique $C^\infty$-structure that is
$C^k$-compatible with the given $C^k$-structure on $M$ (see \cite[Th.\
2.9]{Hirsch}).

As already mentioned in the introduction a fundamental tool in our approach
is approximating a given metric of regularity $C^{1,1}$ by a net
$g_\eps$ of $C^\infty$-metrics, in the following sense:
%%%%%%%%%%%%%%%%%%%%%%%%%%%%%%%%%%%%%%%%%%%%%%%%%%%%%%%%%%%%%%%%%%%%%%%%%%%%%%%%%%%%%%%%%%%%%%%%%%%%%%%%%
%%%%%%%%%%%%%%%%%%%%%%%%%%%%%%%%%%%%%%%%%%%%%%%%%%%%%%%%%%%%%%%%%%%%%%%%%%%%%%%%%%%%%%%%%%%%%%%%%%%%%%%%%
\begin{remark}\label{approxrem}
We cover $M$ by a countable and locally finite collection of
relatively compact chart neighbourhoods and
denote the corresponding charts by $(U_i,\psi_i)$ ($i\in \N$). Let $(\zeta_i)_i$ be a
subordinate partition of unity with $\mathrm{supp}(\zeta_i)\Subset U_i$ (i.e., 
$\mathrm{supp}(\zeta_i)$ is a compact subset of $U_i$) for all $i$
and choose a family of cut-off functions $(\chi_i)_i\in\mathscr{D}(U_i)$ with
$\chi_i\equiv 1$ on a
neighbourhood of $\mathrm{supp}(\zeta_i)$. Finally, let $\rho\in
\mathscr{D}(\R^{n})$ be a test function with unit integral and define the
standard mollifier $\rho_{\eps}(x):=\eps^{-n}\rho\left (\frac{x}{\eps}\right)$
($\eps>0$). Then denoting by $f_*$ (resp.\ $f^*$) push-forward
(resp.\ pullback) under a map $f$, the following formula defines a family 
$(g_\eps)_\eps$ of smooth sections of $T^0_2(M)$ 
\[
 g_\eps:=\sum\limits_i\chi_i\,
 g_\eps^i:=\sum\limits_i\chi_i\,\psi_i^*\Big(\big(\psi_{i\,*} (\zeta_i\,
g)\big)*\rho_\eps\Big)
\]
which satisfies
 \begin{itemize}
 \item[(i)] $g_\eps$ converges to $g$ in the $C^1$-topology as $\eps\to 0$, and
 \item[(ii)] the second derivatives of $g_\eps$ are bounded, uniformly in $\eps$, on compact sets.
 \end{itemize}
 On any compact subset of $M$, therefore, for $\eps$ sufficiently
 small the $g_\eps$ form a family of pseudo-Riemannian metrics of the
 same signature as $g$ whose Riemannian curvature tensors $R_\eps$ are
 bounded uniformly in $\eps$.  Indeed, properties (i) and
 (ii) were the only ones required to derive all results given in \cite{KSS}.

\medskip\noindent
Also observe that the above procedure can be applied even to distributional
sections of any vector bundle $E\to M$ (using the corresponding vector bundle
charts) and that the usual convergence properties of smoothings via convolution 
are preserved.
\end{remark}

\medskip\noindent
To distinguish exponential maps stemming from metrics $g_\eps$, etc.,
we will write $\exp_p^{g_\eps}$, etc.. For brevity we will drop this
superscript for the $C^{1,1}$-metric $g$ itself, though.
%%%%%%%%%%%%%%%%%%%%%%%%%%%%%%%%%%%%%%%%%%%%%%%%%%%%%%%%%%%%%%%%%%%%%%%%%%%%%%%%%%%%%%%%%%%%%%%%%%%%%%%%%
%%%%%%%%%%%%%%%%%%%%%%%%%%%%%%%%%%%%%%%%%%%%%%%%%%%%%%%%%%%%%%%%%%%%%%%%%%%%%%%%%%%%%%%%%%%%%%%%%%%%%%%%%
We shall need the following properties of the exponential maps
corresponding to an approximating net as above:
%%%%%%%%%%%%%%%%%%%%%%%%%%%%%%%%%%%%%%%%%%%%%%%%%%%%%%%%%%%%%%%%%%%%%%%%%%%%%%%%%%%%%%%%%%%%%%%%%%%%%%%%%
%%%%%%%%%%%%%%%%%%%%%%%%%%%%%%%%%%%%%%%%%%%%%%%%%%%%%%%%%%%%%%%%%%%%%%%%%%%%%%%%%%%%%%%%%%%%%%%%%%%%%%%%%
\begin{Lemma}\label{uniform} 
  Let $g$ be a $C^{1,1}$-pseudo-Riemannian metric on $M$ and let
  $g_\eps$ be a net of smooth pseudo-Riemannian metrics that satisfy
  conditions (i) and (ii) of Remark \ref{approxrem}. Then any $p\in M$ has a
  basis of normal neighbourhoods $U$ such that, with $\exp_p: \tilde U
  \to U$, all $\exp_p^{g_\eps}$ are diffeomorphisms with domain
  $\tilde U$ for $\eps$ sufficiently small. Moreover, the inverse maps
  $(\exp_p^{g_\eps})^{-1}$ also are defined on a common neighbourhood
  of $p$ for $\eps$ small, and converge locally uniformly to
  $\exp_p^{-1}$.
\end{Lemma}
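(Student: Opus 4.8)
The plan is to first establish the statement for a \emph{fixed} normal neighbourhood of $p$ with respect to $g$, and then observe that the whole argument goes through for a basis of such neighbourhoods. Fix a relatively compact chart around $p$ and work in the associated coordinates, so that geodesics of $g$ and of each $g_\eps$ are solutions of the respective geodesic ODEs with Christoffel symbols $\Gamma$ (only locally bounded and measurable, being $C^{0,1}$) and $\Gamma_\eps$ (smooth). The key input is that by property (i) of Remark \ref{approxrem} the $\Gamma_\eps$ converge to $\Gamma$ in the $C^0$-topology on compact sets, and by property (ii) they are bounded uniformly in $\eps$ on compact sets. Hence, on a suitable compact coordinate ball, the geodesic flows of $g_\eps$ on the unit tangent bundle satisfy a uniform Lipschitz (or at least uniform linear-growth) bound, giving a uniform lower bound $\delta>0$ on the existence time and, via continuous dependence on the right-hand side of an ODE, local uniform convergence of the flows $\Fl^{g_\eps}$ to the flow $\Fl^{g}$ of the $C^{1,1}$-metric (whose flow is well-defined and Lipschitz by Theorems \ref{mainpseudo} and \ref{totally}, or directly by Picard--Lindel\"of for the locally Lipschitz geodesic vector field). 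Consequently $\exp^{g_\eps}_p \to \exp_p$ locally uniformly on a fixed star-shaped $\tilde U \ni 0$.

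Next I would promote convergence of the maps to a \emph{uniform} statement about their being diffeomorphisms. Since by Theorem \ref{mainpseudo} $\exp_p$ is a bi-Lipschitz homeomorphism on a star-shaped neighbourhood $\tilde U$, shrink $\tilde U$ to a closed ball $\bar B$ on which $\exp_p$ is a bi-Lipschitz embedding with image contained in our coordinate chart. The standard smooth theory already gives that each $\exp^{g_\eps}_p$ is a diffeomorphism on \emph{some} neighbourhood of $0$, but the radius a priori depends on $\eps$; the point is to get a common one. For this I would use that $d\exp^{g_\eps}_p(0) = \mathrm{id}$ together with the uniform second-derivative bound (ii), which controls $\|d\exp^{g_\eps}_p(v) - \mathrm{id}\|$ by $C|v|$ with $C$ independent of $\eps$ on the fixed ball. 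Hence there is $r>0$, independent of $\eps$, such that $d\exp^{g_\eps}_p$ stays close to the identity on $B_r(0)$; by the quantitative inverse function theorem (with $\eps$-uniform constants) each $\exp^{g_\eps}_p$ is then a diffeomorphism from $B_r(0)$ onto its image, and these images all contain a fixed ball $B_{r'}(p)$ for $\eps$ small. Taking $\tilde U := B_r(0)$ (or its intersection with the original star-shaped set) and $U := \exp_p(\tilde U)$, possibly shrinking once more so that $\overline{\exp_p(\tilde U)}$ lies in all the images $\exp^{g_\eps}_p(B_r(0))$ for $\eps$ small, yields the claimed common domain $\tilde U$.

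For the statement about the inverses: restrict attention to a compact neighbourhood $K$ of $p$ contained in $\exp_p(\tilde U) \cap \bigcap_{\eps \le \eps_0}\exp^{g_\eps}_p(B_r(0))$, so that $(\exp^{g_\eps}_p)^{-1}$ is defined on $K$ for all small $\eps$. Local uniform convergence $(\exp^{g_\eps}_p)^{-1} \to \exp_p^{-1}$ on $K$ then follows from the uniform convergence of the direct maps together with \emph{equicontinuity} of the family $\{(\exp^{g_\eps}_p)^{-1}\}$: the latter comes from an $\eps$-uniform lower bound $\| \exp^{g_\eps}_p(v) - \exp^{g_\eps}_p(w)\| \ge c|v-w|$ on $B_r(0)$, which in turn follows from the $\eps$-uniform bound on $\|d\exp^{g_\eps}_p - \mathrm{id}\|$ established above (shrinking $r$ if necessary so that $c \ge 1/2$, say). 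A standard Arzel\`a--Ascoli plus limit-identification argument then pins the limit down to $\exp_p^{-1}$, which is single-valued by Theorem \ref{mainpseudo}.

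I expect the main obstacle to be obtaining the $\eps$-\emph{uniformity} in the inverse function theorem, i.e. a common radius $r$ and common ellipticity constants for all $\exp^{g_\eps}_p$ simultaneously. This is precisely where property (ii) of Remark \ref{approxrem} --- the uniform local bound on the second derivatives of $g_\eps$, equivalently on $\partial\Gamma_\eps$, equivalently a uniform bound on the curvatures $R_\eps$ --- is essential: it is what converts the qualitative smooth inverse function theorem into a quantitative one with $\eps$-independent constants (this is the same mechanism exploited in \cite{KSS}). Everything else is continuous dependence of ODE solutions on parameters and routine Arzel\`a--Ascoli compactness.
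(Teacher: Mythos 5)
Your proposal is correct and follows essentially the same route as the paper: the paper simply cites \cite[Lemmas 2.3 and 2.8]{KSS} for the common domains and the locally uniform convergence $\exp_p^{g_\eps}\to\exp_p$ (which rest on exactly the uniform ODE/Jacobi-field/quantitative inverse function theorem estimates you reconstruct from properties (i) and (ii)), and then proves convergence of the inverses by the same mechanism you use --- uniform Lipschitz bounds on the $(\exp_p^{g_\eps})^{-1}$ giving relative compactness, followed by identification of the limit via the convergence of the forward maps (phrased in the paper as a subsequence--contradiction argument rather than Arzel\`a--Ascoli).
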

%%%%%%%%%%%%%%%%%%%%%%%%%%%%%%%%%%%%%%%%%%%%%%%%%%%%%%%%%%%%%%%%%%%%%%%%%%%%%%%%%%%%%%%%%%%%%%%%%%%%%%%%%
%%%%%%%%%%%%%%%%%%%%%%%%%%%%%%%%%%%%%%%%%%%%%%%%%%%%%%%%%%%%%%%%%%%%%%%%%%%%%%%%%%%%%%%%%%%%%%%%%%%%%%%%%
\begin{proof} 
  The claims about the common domains of $\exp_p^{g_\eps}$, resp.\ of
  $(\exp_p^{g_\eps})^{-1}$ follow from \cite[Lemma 2.3 and
  2.8]{KSS}. To obtain the convergence result, we first note that
  without loss, given a common domain $V$ of the
  $(\exp_p^{g_\eps})^{-1}$ for $\eps<\eps_0$, we may assume that
  $\bigcup_{\eps<\eps_0} (\exp_p^{g_\eps})^{-1}(V)$ is relatively
  compact in $\tilde U$: this follows from the fact that the maps
  $(\exp_p^{g_\eps})^{-1}$ are Lipschitz, uniformly in $\eps$ (see
  \cite{KSS}, the argument following Lemma 2.10).

\medskip\noindent
  Now if $(\exp_p^{g_\eps})^{-1}$ did not converge uniformly to
  $\exp_p^{-1}$ on some compact subset of $V$ then by our compactness
  assumptions we could find a sequence $q_k$ in $V$ converging to some
  $q\in V$ and a sequence $\eps_k\searrow 0$ such that
  $w_k:=(\exp_p^{g_{\eps_k}})^{-1}(q_k)\to w \not =
  \exp_p^{-1}(q)$. But since $(\exp_p^{g_\eps}) \to \exp_p$ locally
  uniformly (by \cite[Lemma 2.3]{KSS}), we arrive at $q_k = \exp_p^{g_{\eps_k}}(w_k) \to
  \exp_p(w)\not=q$, a contradiction.
\end{proof}

\medskip\noindent
In the particular case of $g$ being Lorentzian, a more sophisticated
approximation procedure, adapted to the causal structure of $g$, was
given in \cite[Prop.\ 1.2]{CG}. 

\medskip\noindent
To formulate this result, we first recall that a space-time is a time-oriented
Lorentzian manifold (of signature $(-+\dots+)$), with time-orientation determined by some continuous timelike
vector field.  In what follows, all Lorentzian manifolds will be supposed to be
time-oriented. Also we recall from \cite{CG} that for two Lorentzian metrics $g$,
$h$, we say that $h$ has strictly larger light cones than $g$, denoted by $g\prec h$, if for any tangent
vector $X\not=0$, $g(X,X)\le 0$ implies that $h(X,X)<0$.

\medskip\noindent
We will also need the following technical tools:
\begin{Lemma}\label{sbe} Let $(K_m)$ be an exhaustive sequence of compact subsets of a manifold $M$
($K_m\subseteq K_{m+1}^\circ$, $M=\bigcup_m K_m$), and let $\eps_1\ge \eps_2 \ge \dots >0$ be given.
Then there exists some $\psi\in C^\infty(M)$ such that $0<\psi(p)\le \eps_m$ for $p\in K_m\setminus K_{m-1}^\circ$
(where $K_{-1}:=\emptyset$).
\end{Lemma}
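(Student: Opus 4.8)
The plan is to obtain $\psi$ as a suitably weighted smooth partition of unity subordinate to a cover of $M$ by ``thickened shells'', the weights being read off from the sequence $(\eps_m)$. First I would fix notation by extending the data, putting $K_m:=\emptyset$ for $m\le 0$ and $\eps_m:=\eps_1$ for $m\le 0$ (which keeps $(\eps_m)_{m\in\mathbb{Z}}$ non-increasing), and I would record that, since $K_m\subseteq K_{m+1}^\circ$, the open sets $K_{m+1}^\circ$ already cover $M$. For $m\in\mathbb{Z}$ set
\[
 W_m:=K_{m+1}^\circ\setminus K_{m-2}.
\]
Two elementary observations do the structural work: (a) the $W_m$ cover $M$, because every $p\in M$ lies in a smallest $K_m$ (the index set $\{m:p\in K_m\}$ is non-empty and bounded below), whence $p\in K_m\setminus K_{m-1}\subseteq K_{m+1}^\circ\setminus K_{m-2}=W_m$; and (b) the family $(W_m)$ is locally finite, since $K_N$ meets $W_m$ only for $0\le m\le N+2$, and every point of $M$ has a neighbourhood contained in some $K_N$.

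Next I would choose a smooth partition of unity $(\chi_m)_m$ subordinate to $(W_m)_m$, i.e.\ $\chi_m\ge 0$, $\mathrm{supp}\,\chi_m\Subset W_m$, and $\sum_m\chi_m\equiv 1$ with the sum locally finite, and define
\[
 \psi:=\sum_m\eps_{m+1}\,\chi_m.
\]
Then $\psi\in C^\infty(M)$ as a locally finite sum of smooth functions, and $\psi>0$ everywhere, since at each point at least one $\chi_m$ is positive while all coefficients $\eps_{m+1}$ are positive.

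It then remains to verify the bound $\psi\le\eps_m$ on $K_m\setminus K_{m-1}^\circ$, which is non-empty only for $m\ge 1$. The crucial point is that for such a $p$ the only indices $k$ with $\chi_k(p)\ne 0$ are $k\in\{m-1,m,m+1\}$: indeed $\chi_k(p)\ne 0$ forces $p\in K_{k+1}^\circ$ and $p\notin K_{k-2}$; combining $p\in K_{k+1}^\circ$ with $p\notin K_{m-1}^\circ$ excludes $k\le m-2$, and combining $p\notin K_{k-2}$ with $p\in K_m$ excludes $k\ge m+2$. Since $(\eps_m)$ is non-increasing, $\eps_{m+1},\eps_{m+2}\le\eps_m$, so
\[
 \psi(p)=\eps_m\chi_{m-1}(p)+\eps_{m+1}\chi_m(p)+\eps_{m+2}\chi_{m+1}(p)\le\eps_m\bigl(\chi_{m-1}(p)+\chi_m(p)+\chi_{m+1}(p)\bigr)=\eps_m.
\]

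There is no serious obstacle here; the only thing requiring care is the index bookkeeping in the definition of the shells $W_m$. They must span three consecutive levels of the exhaustion so that the $W_m$ genuinely cover $M$, yet no more, so that at most three consecutive bump functions are non-zero on a single shell $K_m\setminus K_{m-1}^\circ$ — which is exactly what makes the monotonicity of $(\eps_m)$ suffice to deliver the estimate.
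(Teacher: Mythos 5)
Your proof is correct. The paper does not actually prove this lemma; it simply cites \cite[Lemma 2.7.3]{GKOS}, and your construction is the standard argument one would expect to find there: a locally finite cover by shells spanning three levels of the exhaustion, a subordinate partition of unity, and weights $\eps_{m+1}$ chosen so that monotonicity of $(\eps_m)$ turns $\sum_k\chi_k\equiv 1$ into the bound $\psi\le\eps_m$ on $K_m\setminus K_{m-1}^\circ$. The index bookkeeping --- the only delicate point, namely that $W_m=K_{m+1}^\circ\setminus K_{m-2}$ both covers $M$ and forces $\chi_k(p)=0$ for $k\notin\{m-1,m,m+1\}$ when $p\in K_m\setminus K_{m-1}^\circ$ --- checks out.
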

\begin{proof} See, e.g., \cite[Lemma 2.7.3]{GKOS}.
\end{proof}

For what follows, recall that $K\Subset M$ denotes that $K$ is a compact subset of $M$. 
\begin{Lemma}\label{globallem} Let $M$, $N$ be manifolds, and set $I:=(0,\infty)$. 
Let $u: I\times M \to N$ be a smooth map
and let (P) be a property attributable to values $u(\eps,p)$, satisfying:
\begin{itemize}
\item[(i)] For any $K\Subset M$ there exists some $\eps_K>0$ such that (P) holds for 
all $p\in K$ and $\eps<\eps_K$.
\item[(ii)] (P) is stable with respect to decreasing $K$ and $\eps$:
if $u(\eps,p)$ satisfies (P) for all  $p\in K\Subset M$ and
all $\eps$ less than some $\eps_K>0$ then for any compact set $K'\subseteq K$
and any $\eps_{K'} \le \eps_K$, $u$ satisfies (P) on $K'$ for all
$\eps\le \eps_{K'}$.
\end{itemize}
Then there exists a smooth map $\tilde u: I\times M \to N$
such that (P) holds for all $\tilde u(\eps,p)$ ($\eps\in I$, $p\in M$) and
for each $K\Subset M$ there exists some $\eps_K\in I$ such that
$\tilde u(\eps,p) = u(\eps,p)$ for all $(\eps,p)\in (0,\eps_K] \times K$.
\end{Lemma}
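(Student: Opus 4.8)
The plan is to patch the given map $u$ together with suitable "frozen" pieces along an exhaustion of $M$, using Lemma \ref{sbe} to produce a single smooth cutoff parameter that controls where the modification takes place. First I would fix an exhaustive sequence $(K_m)$ of compact subsets of $M$ with $K_m\subseteq K_{m+1}^\circ$ and $M=\bigcup_m K_m$. By hypothesis (i), for each $m$ there is an $\eps_m>0$ such that (P) holds for all $p\in K_{m+1}$ and all $\eps<\eps_m$; replacing $\eps_m$ by $\min(\eps_1,\dots,\eps_m)$ we may assume $\eps_1\ge\eps_2\ge\cdots>0$. Now apply Lemma \ref{sbe} to obtain $\psi\in C^\infty(M)$ with $0<\psi(p)\le\eps_m$ whenever $p\in K_m\setminus K_{m-1}^\circ$.

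The key idea is then to define $\tilde u(\eps,p):=u\big(\min(\eps,\psi(p)),p\big)$. This is smooth provided we smooth out the "$\min$": I would replace $\min(\eps,\psi(p))$ by $\mu(\eps,\psi(p))$ where $\mu\colon I\times I\to I$ is a fixed smooth function with $\mu(s,t)=s$ for $s\le t/2$, $\mu(s,t)=t$ for $s\ge 2t$, and $\mu$ nondecreasing in $s$ with $\mu(s,t)\le\min(s,t)$ throughout (such a $\mu$ is easily built from a smooth partition of the plane). Then $\tilde u(\eps,p):=u\big(\mu(\eps,\psi(p)),p\big)$ is a composition of smooth maps, hence smooth on $I\times M$. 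To check that (P) holds for every $\tilde u(\eps,p)$: given $p\in M$, pick $m$ with $p\in K_m\setminus K_{m-1}^\circ$ (with $K_{-1}=\emptyset$); then $\mu(\eps,\psi(p))\le\psi(p)\le\eps_m$, and since $p\in K_m\subseteq K_{m+1}$ lies in the set on which (P) holds for all parameters below $\eps_m$, property (i) together with the monotonicity afforded by $\mu(\eps,\psi(p))\le\min(\eps,\psi(p))$ — and if necessary stability (ii) applied to the compact set $\{p\}$ — gives (P) for $\tilde u(\eps,p)$.

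Finally, for the agreement statement: given $K\Subset M$, choose $m$ with $K\subseteq K_m^\circ$ and set $\eps_K:=\tfrac12\inf_{p\in K}\psi(p)>0$ (positive by compactness of $K$ and continuity and positivity of $\psi$). For $(\eps,p)\in(0,\eps_K]\times K$ we have $\eps\le\eps_K\le\tfrac12\psi(p)$, hence $\mu(\eps,\psi(p))=\eps$ by the defining property of $\mu$, so $\tilde u(\eps,p)=u(\eps,p)$, as required. The main obstacle — really the only subtle point — is ensuring that (P) genuinely transfers from $u$ to $\tilde u$ at parameter values $\mu(\eps,\psi(p))$ that may be strictly smaller than both $\eps$ and $\psi(p)$; this is exactly what stability hypothesis (ii) is designed to handle, applied with the singleton $K'=\{p\}\subseteq K_{m+1}$ and $\eps_{K'}=\eps_m$, which guarantees (P) at every parameter in $(0,\eps_m]$, in particular at $\mu(\eps,\psi(p))$.
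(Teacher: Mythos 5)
Your proof is correct. The paper itself gives no argument here, merely citing \cite[Lemma 4.3]{HKS}, and your construction --- extracting a decreasing sequence $\eps_m$ from hypothesis (i) along an exhaustion, feeding it into Lemma \ref{sbe} to get the smooth threshold $\psi$, and then reparametrising the $\eps$-slot of $u$ by a smoothed minimum $\mu(\eps,\psi(p))$ --- is exactly the standard gluing argument that reference employs, with all steps checking out: smoothness of $\tilde u$ is clear, (P) transfers since $\mu(\eps,\psi(p))\le\psi(p)\le\eps_m$ on $K_m\setminus K_{m-1}^\circ$ (hypothesis (ii) covering the boundary value $\eps_m$), and agreement on $(0,\eps_K]\times K$ follows from $\mu(s,t)=s$ for $s\le t/2$. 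One cosmetic remark: you demand more of $\mu$ than you use (only $\mu(s,t)\le t$ and $\mu(s,t)=s$ for $s\le t/2$ are needed, and such a $\mu$ is readily exhibited, e.g.\ $\mu(s,t)=t\,h(s/t)$ for a suitable smooth $h$).
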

\begin{proof} See \cite[Lemma 4.3]{HKS}.
\end{proof}
\medskip\noindent
Based on these auxiliary results, we can prove the following refined version
of \cite[Prop.\ 1.2]{CG}:
%%%%%%%%%%%%%%%%%%%%%%%%%%%%%%%%%%%%%%%%%%%%%%%%%%%%%%%%%%%%%%%%%%%%%%%%%%%%%%%%%%%%%%%%%%%%%%%%%%%%%%%%%
%%%%%%%%%%%%%%%%%%%%%%%%%%%%%%%%%%%%%%%%%%%%%%%%%%%%%%%%%%%%%%%%%%%%%%%%%%%%%%%%%%%%%%%%%%%%%%%%%%%%%%%%%
\begin{Proposition}\label{CGapprox} Let $(M,g)$ be a space-time with a
continuous Lorentzian metric, and $h$ some smooth
background Riemannian metric on $M$. Then for any $\eps>0$, there exist smooth
Lorentzian metrics $\check g_\eps$ and $\hat g_\eps$ on $M$ such that $\check g_\eps
\prec g \prec \hat g_\eps$ and $d_h(\check g_\eps,g) + d_h(\hat g_\eps,g)<\eps$,
where  
$$
d_h(g_1,g_2) := \sup_{0\not=X,Y\in TM} \frac{|g_1(X,Y)-g_2(X,Y)|}{\|X\|_h \|Y\|_h}.
$$
Moreover, $\hat g_\eps$ and $\check g_\eps$ depend smoothly on $\eps$, and if
$g\in C^{1,1}$ then $\check g_\eps$ and $\hat g_\eps$ additionally satisfy 
(i) and (ii) from Rem.\ \ref{approxrem}.
\end{Proposition}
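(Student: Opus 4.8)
The plan is to take $\check g_\eps$ and $\hat g_\eps$ of the form $g_\eps\pm\varphi_\eps h$, where $g_\eps$ is a smooth Lorentzian metric obtained by mollifying $g$ to small $d_h$-distance and $\varphi_\eps>0$ is a small smooth function chosen just large enough to genuinely close down, respectively open up, the light cones of $g_\eps$. First I would build $g_\eps$ for a fixed $\eps$: mollify $g$ chart by chart as in Remark~\ref{approxrem}, but letting the mollification parameter vary over $M$, becoming finer towards infinity. This is needed because the \emph{timelike margin} $\mu(\cdot)$ of $g$---the distance, in the $h$-operator norm $\|\cdot\|_h$ on symmetric $(0,2)$-tensors, from $g(x)$ to the non-Lorentzian forms---may shrink to $0$ there. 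Fixing an exhaustion $(K_m)$ of $M$ and a smooth positive lower bound for $\mu$ (again denoted $\mu$), Lemma~\ref{sbe} lets us choose the mollification parameter so that, at every point of $M$,
\[
\|g_\eps-g\|_{h}<\tfrac16\min\big(\eps,\ \mu(\cdot)\big).
\]
The bound $<\eps/6$ gives $d_h(g_\eps,g)<\eps/6$; the bound $<\mu/6$, lying below $\mu$, forces $g_\eps$ to be Lorentzian on \emph{all} of $M$, not just on compact sets.

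Next I choose $\varphi_\eps$. By the displayed inequality the interval $\big(\|g_\eps-g\|_{h,x},\ \tfrac16\min(\eps,\mu(x))\big)$ is nonempty at every $x$, so a partition-of-unity argument (or again Lemma~\ref{sbe}) produces a smooth $\varphi_\eps>0$ with values in it. Set $\hat g_\eps:=g_\eps-\varphi_\eps h$ and $\check g_\eps:=g_\eps+\varphi_\eps h$. For $0\neq X\in TM$ with $g(X,X)\le 0$, using $\varphi_\eps>\|g_\eps-g\|_{h}$,
\[
\hat g_\eps(X,X)=g(X,X)+(g_\eps-g)(X,X)-\varphi_\eps\|X\|_h^2<g(X,X)\le 0,
\]
so $g\prec\hat g_\eps$, and symmetrically $\check g_\eps\prec g$. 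Since $\|g_\eps-g\|_h+\varphi_\eps<\tfrac16\mu+\tfrac16\mu<\mu$ pointwise, $\hat g_\eps$ and $\check g_\eps$ differ from $g$ by less than its timelike margin and are hence again Lorentzian, and $d_h(\hat g_\eps,g)+d_h(\check g_\eps,g)<2\big(\tfrac\eps6+\tfrac\eps6\big)<\eps$.

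For the smooth dependence on $\eps$ and the $C^{1,1}$-refinement, I would run the above at the parameters $\eps=1/k$, $k\in\N$, and then, on each interval $[1/(k+1),1/k]$, let $\hat g_\eps$ and $\check g_\eps$ be smooth convex-combination interpolations between the corresponding metrics at $1/k$ and $1/(k+1)$, using interpolation functions that are plateaued near the endpoints (so the glued family is $C^\infty$ in $\eps$); choosing the $d_h$-distances at the nodes small enough keeps these interpolations within $d_h$-distance $\eps$ of $g$, and since a convex combination of metrics all $\succ g$ (resp.\ $\prec g$) and all within the timelike margin of $g$ has the same two properties, the cone orderings and the Lorentzian property persist. (One may instead use Lemma~\ref{globallem} to organise the fixed-$\eps$ constructions into a jointly smooth family.) Finally, if $g\in C^{1,1}$, then the position-dependent mollification of the $C^{1,1}$-metric $g$ still satisfies (i) and (ii) of Remark~\ref{approxrem}---the extra terms from the position-dependence of the parameter are controlled by the boundedness of $\partial g$, $\partial^2g$ and the smoothness of the parameter---and $\varphi_\eps h\to 0$ in $C^\infty$ with locally uniformly bounded second derivatives, so (i) and (ii) pass to $\hat g_\eps$ and $\check g_\eps$ (and the convex interpolations preserve them).

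The step I expect to be the main obstacle is the global bookkeeping underlying the construction of $g_\eps$ and $\varphi_\eps$ on the non-compact manifold $M$: the mollification error and the correction $\varphi_\eps$ must be kept simultaneously (a) below $\eps/6$, for the $d_h$-bound; (b) below the timelike margin of $g$, which may degenerate towards infinity, so that Lorentzian signature survives the perturbation; and (c) with $\varphi_\eps$ strictly above the mollification error, so that the cones move in the prescribed direction---after which everything must be made to depend smoothly on $\eps$. Reconciling (a)--(c) along an exhaustion of $M$ and organising the outcome into a smooth family is exactly what Lemmas~\ref{sbe} and~\ref{globallem} are for; the remaining verifications (the two displayed cone estimates and the persistence of (i)--(ii) for the position-dependent mollification of a $C^{1,1}$-metric) are routine.
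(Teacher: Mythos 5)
Your construction is correct, but it takes a genuinely different route from the paper's at the key step. Where you open and close the light cones by the full-rank Riemannian perturbation $\mp\varphi_\eps h$, the paper uses a rank-one perturbation $\lambda\,\omega\otimes\omega$ along a smooth timelike one-form $\omega$ built from the time-orientation, with $\lambda<0$ for $\hat g_\eps$ and $\lambda>0$ for $\check g_\eps$; the cone estimate there rests on a compactness bound $|\omega(X)|\ge c_i$ for $h$-normalised $g$-causal $X$ on each chart, playing the role of your inequality $\varphi_\eps>\|g_\eps-g\|_h$. Your version is arguably cleaner at this point (the estimate $-\varphi_\eps\|X\|_h^2$ needs no chart-dependent constant and treats $\hat g_\eps$ and $\check g_\eps$ symmetrically), and your ``timelike margin'' $\mu$ gives a transparent reason why the signature survives the perturbation globally; the paper instead gets Lorentzian signature only on compact sets for small $\eps$ and then invokes Lemma~\ref{globallem} to repair this into a globally defined family. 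For the smooth $\eps$-dependence the paper makes all parameters smooth functions of $\eps$ from the outset via Lemma~\ref{sbe} and then applies Lemma~\ref{globallem}, whereas you interpolate between the nodes $\eps=1/k$; your convexity observations (the $\mu$-ball around $g(x)$, the conditions $\succ g$, $\prec g$ and the $d_h$-bound are all preserved under convex combination) do make that work.

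Three small caveats. First, your parenthetical alternative ``use Lemma~\ref{globallem} to organise the fixed-$\eps$ constructions into a jointly smooth family'' does not work as stated: that lemma takes a map that is \emph{already} jointly smooth in $(\eps,p)$ as input and only modifies it, so you must either interpolate as in your main route or build the joint smoothness in from the start as the paper does. Second, letting the mollification parameter vary with the point \emph{within} a chart is an unnecessary complication that creates the extra second-derivative terms you then have to control; since the cover by relatively compact charts is locally finite, chart-wise constant parameters (as in Remark~\ref{approxrem}) already give you arbitrarily fine mollification towards infinity, and then (i) and (ii) hold verbatim. Third, for (ii) to pass to $\hat g_\eps=g_\eps-\varphi_\eps h$ you need the derivatives of $\varphi_\eps$ to be bounded (indeed to tend to $0$) locally uniformly in $\eps$, which the pointwise bound $0<\varphi_\eps<\eps/6$ alone does not give; take e.g.\ $\varphi_\eps=c(\eps)\varphi$ with a fixed smooth $\varphi>0$ and a scalar $c(\eps)\to0$, chosen so that $c(\eps)\varphi$ still dominates the mollification error.
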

%%%%%%%%%%%%%%%%%%%%%%%%%%%%%%%%%%%%%%%%%%%%%%%%%%%%%%%%%%%%%%%%%%%%%%%%%%%%%%%%%%%%%%%%%%%%%%%%%%%%%%%%%
%%%%%%%%%%%%%%%%%%%%%%%%%%%%%%%%%%%%%%%%%%%%%%%%%%%%%%%%%%%%%%%%%%%%%%%%%%%%%%%%%%%%%%%%%%%%%%%%%%%%%%%%%
%%%%%%%%%%%%%%%%%%%%%%%%%%%%%%%%%%%%%%%%%%%%%%%%%%%%%%%%%%%%%%%%%%%%%%%%%%%%%%%%%%%%%%%%%%%%%%%%%%%%%%%%%
%%%%%%%%%%%%%%%%%%%%%%%%%%%%%%%%%%%%%%%%%%%%%%%%%%%%%%%%%%%%%%%%%%%%%%%%%%%%%%%%%%%%%%%%%%%%%%%%%%%%%%%%%
\begin{proof} First we use time-orientation to obtain a $C^{1,1}$
timelike one-form $\tilde\omega$ (the $g$-metric equivalent of a smooth timelike vector field).
Using the smoothing procedure of Rem.\ \ref{approxrem}, on each $U_i$ we can pick $\eps_i>0$ so small
that $\tilde\omega_{\eps_i}$ is timelike on $U_i$. Then $\omega := \sum_i \zeta_i \tilde \omega_{\eps_i}$ is a
%which we smoothen according to the procedure detailed in Remark \ref{approxrem} to obtain a 
smooth timelike one-form on $M$. By compactness we obtain on every $U_i$ a constant $c_i>0$ such that 
\begin{equation}\label{ci}
   |\omega(X)|\geq c_i\quad \text{for all $g$-causal vector fields $X$ 
  with $\|X\|_h=1$}.
\end{equation}
\medskip\noindent
Next we set on each $U_i$ and for $\eta>0$ and $\lambda<0$ 
\begin{equation}\label{gel}
 {\hat g}^i_{\eta,\lambda}=g_\eta^i+\lambda\, \omega\otimes\omega,
\end{equation}
where $g^i_\eta$ is as in Remark \ref{approxrem} (set $\eps:=\eta$ there and $g^i_\eta:=g_\eta|_{U_i}$).
Let $\Lambda_k$ ($k\in\N$) be a compact exhaustion of $(-\infty,0)$. For each $k$, there exists
some $\eta_k>0$ such that $\eta_k< \min_{\lambda\in \Lambda_k} |\lambda|$, $\eta_k>\eta_{k+1}$ for all $k$, and

\begin{equation}\label{geps-g}
 |g^i_\eta(X,X)-g(X,X)|\leq |\lambda|\,\frac{c_i^2}{2}
\end{equation}
for all $g$-causal vector fields $X$ on $U_i$ with $\|X\|_h=1$, all $\lambda \in \Lambda_k$, and all $0< \eta\le \eta_k$. 
Thus by Lemma \ref{sbe} there exists a smooth function $\lambda\mapsto \eta(\lambda,i)$ on $(-\infty,0)$
with $0<\eta(\lambda,i)\le |\lambda|$ and such that \eqref{geps-g} holds
for all $g$-causal vector fields $X$ on $U_i$ with $\|X\|_h=1$, all $\lambda$, and all $0< \eta\le \eta(\lambda,i)$.

\medskip\noindent
Combining
\eqref{ci} with \eqref{geps-g} we obtain
\[
 {\hat g}^i_{\eta,\lambda}(X,X)
 =g(X,X)+(g^i_\eta-g)(X,X)+\lambda\,\omega(X)^2
 \leq0+\Big(|\lambda|\frac{c_i^2}{2}+\lambda c_i^2\Big)\|X\|_h^2<0,
\]  
for all $g$-causal $X$ and hence $g \prec {\hat g}^i_{\eta,\lambda}$  
for all $\lambda<0$ and $0<\eta\leq\eta(\lambda,i)$. 

\medskip\noindent
Given a compact exhaustion $E_k$ ($k\in \N$) of $(0,\infty)$, for each $k$ there exists
some $\lambda_k<0$ such that $|\lambda_k|<\min_{\eps\in E_k} \eps$,
$\lambda_k<\lambda_{k+1}$ for all $k$, and
\[
 d_{U_i}(\hat g^i_{\eta(\lambda,i),\lambda},g)
 :=\sup_{0\not=X,Y\in TU_i}\,\frac{|\hat g^i_{\eta(\lambda,i),\lambda}(X,Y)-g(X,Y)|}{
   \|X\|_h\|Y\|_h}
 <\,\frac{\eps}{2^{i+1}}.
\]
for all $\eps\in E_k$ and all $\lambda_k\le \lambda <0$. Again by Lemma \ref{sbe} we
obtain a smooth map $(0,\infty) \to (-\infty,0)$, $\eps\to \lambda_i(\eps)$ such that $|\lambda_i(\eps)|< \eps$
for all $\eps$, and
$d_{U_i}(\hat g^i_{\eta(\lambda_i(\eps),i),\lambda_i(\eps)},g)<\frac{\eps}{2^{i+1}}$
for all $\eps>0$.
We now consider the smooth symmetric $(0,2)$-tensor field on $M$,
\[
  g_\eps:=\sum_i\chi_i \hat g^i_{\eta(\lambda_i(\eps),i),\lambda_i(\eps)}.
\]
By construction, $(\eps,p)\mapsto g_\eps(p)$ is smooth, and $g_\eps$ converges to $g$
locally uniformly as $\eps\to 0$. Therefore, for any $K\Subset M$
there exists some $\eps_K$ such that for all $0<\eps<\eps_K$, $g_\eps$ is of the same
signature as $g$, hence a Lorentzian metric on $K$,
with strictly larger lightcones than $g$. We are thus in a position to apply Lemma \ref{globallem}
to obtain a smooth map $(\eps,p)\mapsto \hat g_\eps(p)$ such that for each fixed $\eps$, $\hat g_\eps$ is a globally
defined Lorentzian metric which on any given $K\Subset M$ coincides with $g_\eps$ for
sufficiently small $\eps$.

\medskip\noindent 
Then $d_h(\hat g_\eps,g)< \eps/2$, and $\eps\to 0$ implies $ \lambda_i(\eps) \to 0$ and a fortiori $\eta(\lambda_i(\eps),i)\to 0$
for each $i\in \N$. 

\medskip\noindent
From this, by virtue of \eqref{gel}, (i) and (ii) of Remark 
\ref{approxrem} hold for $\hat g_\eps$ if $g\in C^{1,1}$.

\medskip\noindent
The approximation $\check g_\eps$ is constructed analogously choosing $\lambda>0$.
\end{proof}

\begin{remark} 
\begin{itemize}
\item[(i)] From Rem.\ \ref{approxrem} and the above proof it follows that, given a
Lorentzian metric of some prescribed regularity (e.g., Sobolev, H\"older, etc.), 
the inner and outer regularisations $\check g_\eps$ and $\hat g_\eps$ converge to $g$ 
as good as regularisations by convolution do locally. 
\item[(ii)] If $g$ is a metric of general pseudo-Riemannian signature, then 
since $g_\eps$ in Rem.\ \ref{approxrem} depends smoothly on $\eps$, also in this case an application
of Lemma \ref{globallem} allows 
to produce regularisations $\tilde g_\eps$ that are pseudo-Riemannian metrics on all of $M$
of the same signature as $g$ and satisfy (i) and (ii) from that remark.
\end{itemize}
\end{remark}

To conclude this section we derive the Gauss lemma for $C^{1,1}$-metrics. 
This result has first appeared (in a more general form) in \cite{M}. We 
include an independent proof\footnote{The original proof which appeared in the 
  published 
  version as well as in arXiv:1310.4404v1--3[math.DG] is incorrect for it 
  wrongly 
  supposes that the map $s\mapsto \exp_p(t(v+sw))$ is a geodesic 
  (line 5 of the proof).
  %
  %\par
  Here we supply a new proof which is no longer based on   
  regulari\-sing the $\mathcal{C}^{1,1}$-metric but on a careful 
analysis of differentiability properties of the exponential map, which allows 
us to directly follow the classical proof.}.

\begin{Theorem}\label{Gauss} (The Gauss Lemma) 
  Let $g$ be a $C^{1,1}$-pseudo-Riemannian metric on $M$, and let
  $p\in M$.  Then $p$ possesses a basis of normal neighbourhoods $U$ with the following
properties: $\exp_p: \tilde U \to U$ is a bi-Lipschitz
  homeomorphism, where $\tilde U$ is an open star-shaped neighbourhood
  of $0$ in $T_pM$. Moreover,
  for almost all $x\in \tilde U$, if $v_x$, $w_x\in T_x(T_pM)$ and
  $v_x$ is radial, then
$$
\langle T_x\exp_p(v_x), T_x\exp_p(w_x)\rangle = \langle v_x, w_x \rangle.
$$
\end{Theorem}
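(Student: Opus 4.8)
The plan is to obtain the $C^{1,1}$-Gauss Lemma as a limit of the classical (smooth) Gauss Lemma applied to the approximating metrics $g_\eps$ from Remark \ref{approxrem}. First I would fix $p$ and invoke Theorem \ref{mainpseudo} to get a normal neighbourhood $U$ of $p$ with $\exp_p\colon\tilde U\to U$ a bi-Lipschitz homeomorphism, $\tilde U$ open and star-shaped around $0$. Shrinking if necessary, Lemma \ref{uniform} then provides, for $\eps$ small, diffeomorphisms $\exp_p^{g_\eps}\colon\tilde U\to U_\eps$ with $(\exp_p^{g_\eps})^{-1}\to\exp_p^{-1}$ locally uniformly on a common neighbourhood of $p$; by the local uniform convergence $g_\eps\to g$ in $C^1$ (Rem.\ \ref{approxrem}(i)) and the standard smooth-dependence results of \cite{KSS}, also $\exp_p^{g_\eps}\to\exp_p$ and $T_x\exp_p^{g_\eps}\to T_x\exp_p$ in a suitable sense. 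The classical Gauss Lemma for each smooth $g_\eps$ reads $\langle T_x\exp_p^{g_\eps}(v_x),T_x\exp_p^{g_\eps}(w_x)\rangle_{g_\eps}=\langle v_x,w_x\rangle_{g_\eps}$ for every $x\in\tilde U$, every radial $v_x$ and every $w_x$, where on the right the metric $g_\eps$ is evaluated at $p$ (i.e.\ the flat inner product on $T_pM$ induced by $g_\eps|_p$) and on the left at $\exp_p^{g_\eps}(x)$.

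The core of the argument is then a passage to the limit in this identity, which cannot be done pointwise in $x$ because $\exp_p$ is merely Lipschitz and $T_x\exp_p$ exists only almost everywhere (Rademacher). The clean way to handle this is to integrate: for a fixed smooth compactly supported vector field $W$ on $\tilde U$ and a test function, or more directly by testing the derivative against radial directions, one shows that the $1$-form $x\mapsto\langle T_x\exp_p(\partial_r|_x),T_x\exp_p(\cdot)\rangle - \langle \partial_r|_x,\cdot\rangle$ (which is an $L^\infty$ $1$-form on $\tilde U$, since $\exp_p$ is Lipschitz) is the almost-everywhere and $L^1_{\mathrm{loc}}$-limit of the corresponding smooth $1$-forms for $g_\eps$, which all vanish identically. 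The ingredients are: (a) $T_x\exp_p^{g_\eps}\to T_x\exp_p$ in $L^p_{\mathrm{loc}}(\tilde U)$ for every $p<\infty$ — this follows from the uniform Lipschitz bounds on $\exp_p^{g_\eps}$ (equibounded derivatives, hence weak-$*$ convergence in $L^\infty$, and convergence of the maps themselves forces the limit to be $T_x\exp_p$ wherever the latter exists), together with the a.e.\ differentiability; (b) $g_\eps\to g$ uniformly on the relevant compact sets, so the metric coefficients converge in $L^\infty$; (c) the evaluation point $\exp_p^{g_\eps}(x)\to\exp_p(x)$ uniformly, so composing the (continuous) metric coefficients with these maps converges uniformly. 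Multiplying out, the left-hand sides converge in $L^1_{\mathrm{loc}}$ to $\langle T_x\exp_p(v_x),T_x\exp_p(w_x)\rangle$, the right-hand sides to $\langle v_x,w_x\rangle$, and since each equation holds identically the limit holds for a.e.\ $x\in\tilde U$. Since $v_x$ radial, $w_x$ arbitrary, and the bilinearity in $(v_x,w_x)$ let one reduce to countably many directions, a single null set works for all $v_x,w_x$ simultaneously.

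The main obstacle I expect is precisely the lack of pointwise convergence of the differentials $T_x\exp_p^{g_\eps}$: one only has equi-Lipschitz bounds, so a priori just weak-$*$ $L^\infty$ convergence of a subsequence. The point to be careful about is identifying the weak-$*$ limit with the true (a.e.-defined) differential $T_x\exp_p$. This is standard — uniform convergence of the primitives $\exp_p^{g_\eps}$ pins down the distributional derivative of the limit, and that distributional derivative agrees a.e.\ with the classical (Rademacher) derivative of the Lipschitz map $\exp_p$ — but it must be invoked cleanly, and it forces the conclusion to be stated ``for almost all $x$'' rather than pointwise, exactly as in the theorem. A secondary technical point is that the basis of normal neighbourhoods must simultaneously be taken from Lemma \ref{uniform} and be star-shaped; since Theorem \ref{totally} and Lemma \ref{uniform} both deliver arbitrarily small such neighbourhoods, intersecting the two families (or simply noting that the neighbourhoods of Lemma \ref{uniform} are already of the required form) takes care of this. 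No curvature bounds are actually needed for this particular statement, so property (ii) of Remark \ref{approxrem} is not essential here, only (i).
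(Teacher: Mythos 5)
Your overall strategy---regularise, apply the smooth Gauss lemma to each $g_\eps$, and pass to the limit---is the same as the paper's, but the way you take the limit contains a genuine gap at the central step. You need to pass to the limit in the expression $\langle T_x\exp_p^{g_\eps}(v_x),T_x\exp_p^{g_\eps}(w_x)\rangle_{g_\eps}$, which is \emph{quadratic} in the differentials $T_x\exp_p^{g_\eps}$. For this, weak-$*$ convergence in $L^\infty$ is not enough, and that is all that your stated ingredients deliver: equi-Lipschitz bounds plus uniform convergence of the maps $\exp_p^{g_\eps}$ give weak-$*$ convergence of the derivatives and identify the weak-$*$ limit with the Rademacher derivative of $\exp_p$, but they do \emph{not} upgrade this to strong $L^p_{\mathrm{loc}}$ or a.e.\ convergence. (Standard counterexample: $f_\eps(x)=\eps\sin(x/\eps)$ is equi-Lipschitz and converges uniformly to $0$, yet $f_\eps'(x)=\cos(x/\eps)$ converges to $0$ only weakly-$*$, not in $L^1_{\mathrm{loc}}$, and $\int (f_\eps')^2$ does not tend to $0$.) So the obstacle is not, as you suggest, ``identifying the weak-$*$ limit'' --- that part is indeed standard --- but obtaining a mode of convergence strong enough for the quadratic form, and this cannot come from soft measure-theoretic arguments alone.

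What actually closes this gap, and what the paper uses, is the ODE structure of the differentials: the proof fixes a single $v$ at which $T_v\exp_p$ exists, writes the two vectors as $\partial_t|_1\exp_p^{g_\eps}(tv)$ and $\partial_s|_0\exp_p^{g_\eps}(v+sw)$ for the two-parameter family $f^\eps(t,s)=\exp_p^{g_\eps}(t(v+sw))$, and invokes the convergence results for the regularised geodesic flows from \cite{KSS} (Lemma 2.3 and the discussion following it) to get \emph{pointwise} convergence of both vectors at that $v$; the identity then passes to the limit at every point of differentiability, which is almost every point. Note also that this input (and already the equi-Lipschitz bounds and Lemma \ref{uniform} that you yourself rely on) rests on the uniform bounds on the second derivatives of $g_\eps$, i.e.\ on property (ii) of Remark \ref{approxrem}; your closing claim that only (i) is needed is therefore not correct. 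The rest of your setup (choice of neighbourhoods, bilinearity reduction to countably many directions, a single null set) is fine.
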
 
%%%%%%%%%%%%%%%%%%%%%%%%%%%%%%%%%%%%%%%%%%%%%%%%%%%%%%%%%%%%%%%%%%%%%%%%%%%%%%%%%%%%%%%%%%%%%%%%%%%%%%%%%
%%%%%%%%%%%%%%%%%%%%%%%%%%%%%%%%%%%%%%%%%%%%%%%%%%%%%%%%%%%%%%%%%%%%%%%%%%%%%%%%%%%%%%%%%%%%%%%%%%%%%%%%%
\begin{proof}
We may set $M=\R^n$ and assume that $x=v_x=v$ and that 
$T_v\exp_p$ exists. We then consider the mapping
\begin{equation}\label{1}
  f(t,s):=\exp_p\big(t(v+sw)\big),\qquad t\in[0,1]=:I,\ s\in[-\eps,\eps]=:J.
\end{equation}
We clearly have $ f_t(1,0)=T_v\exp_p(v)$ and $f_s(1,0)=T_v\exp_p(w)$
and it remains to show that $\langle f_t(1,0),f_s(1,0)\rangle=\langle 
v,w\rangle$.

To begin with note that $t\mapsto f(t,s)$ is a geodesic with initial speed 
$v+sw$. Hence $f_{tt}(t,s)=0$ and so we have for all $(t,s)\in I\times J$
\begin{equation}\label{2}
  \langle f_t(t,s),f_t(t,s)\rangle=\langle f_t(0,s),f_t(0,s)\rangle=\langle 
  v+sw,v+sw\rangle.
\end{equation}
On the other hand, by standard results from ODE theory, we have
\begin{equation}
  s\mapsto f(.,s)\ \in\ \mathrm{Lip}\big(J,\mathcal{C}^2(I,\R^n)\big).
\end{equation}
For any $1<p<\infty$, 
$\mathrm{Lip}\big(J,\mathcal{C}^2(I,\R^n)\big)\subseteq\mathrm{Lip}\big(J,W^{2,p
}((0,1),\R^n)\big)$.
Moreover, since the Sobolev space $W^{2,p}$ is reflexive for these values of 
$p$, it possesses 
the Radon-Nikodym property and so by \cite[p.\ 259]{LP} we have
\begin{equation}
  s\mapsto f_s(.,s)\in L^\infty\big(J,W^{2,p}((0,1),\R^n))\big)\subseteq
  L^\infty\big(J,\mathcal{C}^1_{\mathrm{b}}((0,1),\R^n)\big),
\end{equation}
where we have used the Sobolev embedding theorem. Thus 
for almost all $s$ and all $t$, $\partial_t\partial_s f(t,s)$ exists and is 
essentially bounded. Similarly $s\mapsto 
f_t(.,s)\in\mathrm{Lip}\big(J,\mathcal{C}^1(I,\R^n)\big)\subseteq 
\mathrm{Lip}\big(J,W^{1,p}((0,1),\R^n)\big)$ and again by the Radon-Nikodym 
property and the Sobolev embedding
\begin{equation}
  s\mapsto \partial_s\partial_t f(.,s) \in 
  L^\infty\big(J,\mathcal{C}^0_{{\mathrm b}}((0,1),\R^n)\big)\subseteq 
  L^\infty(I\times J).
\end{equation}
But now $\partial_t\partial_s f$  and  $\partial_s\partial_t f$ agree as 
distributions and hence also in ${L^\infty(I\times J)}$. Moreover the mapping 
$s\mapsto \langle f_s,f_t\rangle \in L^\infty(J,\mathcal{C}^1_{{\mathrm 
    b}}(I))$ 
and we 
may finally calculate
\begin{equation}\label{6}
  \partial_t \langle f_s,f_t\rangle = \langle f_{st},f_t\rangle
  = \langle f_{ts},f_t\rangle =\frac{1}{2}\,\partial_s\langle f_t,f_t\rangle 
  = \langle v,w\rangle +s \langle w,w\rangle,
\end{equation}
where we have used (\ref{2}). Observe that equation (\ref{6}) holds for all $t$ 
and hence also for all $s$ and so we have $\partial_t \langle 
f_s,f_t\rangle(t,0)=\langle v,w\rangle$ for all $t$. Furthermore since 
$f_s(0,0)$ and hence $\langle f_s,f_t\rangle(0,0)$ vanishes we find
$\langle f_s,f_t\rangle (t,0)=t\langle v,w\rangle$ and finally $\langle 
f_t(1,0),f_s(1,0)\rangle=\langle 
v,w\rangle$. 
\end{proof}
\section{Causality theory}

As in \cite{Chrusciel_causality} we will base our approach to
causality theory on locally Lipschitz curves. 
We note that this definition differs from that in \cite{M}, where 
the corresponding curves are required to be $C^1$ (see, however, Cor.\ \ref{lipisc1} below or \cite[Th.\ 1.27]{M}).
Any locally Lipschitz curve $c$ is
differentiable almost everywhere (Rademacher's theorem) and we call
$c$ timelike, causal, spacelike or null, if $c'(t)$ has the
corresponding property whenever it exists.  If the time-orientation of
$M$ is determined by a continuous timelike vector field $X$ then a causal curve
$c$ is called future- resp.\ past-directed if $\langle
X(c(t)),c'(t)\rangle < 0$ resp.\ $>0$ wherever $c'(t)$ exists. With
these notions we have:
\begin{definition}\label{causalitydef} 
Let $g$ be a $C^0$-Lorentzian metric on $M$. For $p\in A \subseteq M$ 
we define the relative chronological, respectively causal future of $p$ in $A$ by
(cf.\ \cite[2.4]{Chrusciel_causality}):
\begin{equation*}
\begin{split}
I^{+}(p,A) &:=\{q\in A |\ \text{there exists a future directed timelike curve in $A$ from $p$ to $q$  }\} \\
J^{+}(p,A) &:=\{q\in A |\ \text{there exists a future directed causal curve in $A$ from $p$ to $q$  }\}\cup A. 
\end{split}
\end{equation*}
For $B\subseteq A$ we set $I^+(B,A) := \bigcup_{p\in B} I^+(p,A)$ and analogously for $J^+(B,A)$.
We set $I^+(p):=I^+(p,M)$. Replacing `future directed' by `past-directed' we obtain the corresponding
definitions of the chronological respectively causal pasts $I^-$, $J^-$. 
\end{definition}
\medskip\noindent
Below we will formulate all results for $I^+$, $J^+$. By symmetry, the corresponding claims for
chronological or causal pasts follow in the same way.

\medskip\noindent
As usual, for $p$, $q\in M$ we write $p < q$, respectively $p\ll q$, if
there is a future directed causal, respectively timelike, curve from $p$ 
to $q$. By $p\le q$ we mean $p=q$ or $p<q$. 

\medskip\noindent
We now recall some definitions that were introduced in \cite{CG} and 
results there obtained which will be of use in this paper. 

\begin{definition}
A locally Lipschitz curve $\alpha: [0,1]\rightarrow M$ is said to be
locally uniformly timelike (l.u.-timelike) with respect to the $C^0$-metric $g$ 
if there exists a smooth 
Lorentzian metric $\check g\prec g$ such that $\check g(\alpha',\alpha')<0$ 
almost everywhere. Then for $p\in A\subseteq M$
\begin{equation*}
\check I^+_g(p,A):=\{q\in A|\ \text{there exists a future directed 
l.u.-timelike curve in $A$ from $p$ to $q$} \}. 
\end{equation*}
\end{definition}
Thus $\check I^+_g(A)=\bigcup_{\check g\prec g}I^+_{\check g}(A)$, hence it is open (\cite[Prop.\ 1.4]{CG}).
The following definition (\cite[Def.\ 1.8]{CG}) introduces a highly useful substitute for
normal coordinates in the context of metrics of low regularity
\begin{definition}\label{def:cylindrical chart}
Let $(M,g)$ be a smooth Lorentzian manifold with continuous metric $g$ 
and let $p\in M$. A relatively compact open subset $U$ of $M$ is called
a cylindrical neighbourhood of $p\in U$ if there exists a smooth chart
$(\vphi,U)$, $\vphi= (x^0,...,x^{n-1})$ with $\vphi(U)=I\times V$,
$I$ an interval around $0$ in $\R$ and $V$ open in $\R^{n-1}$, such that:
\begin{enumerate}
\item $\frac{\partial}{\partial x^0}$ is timelike and $\frac{\partial}{\partial x^i}$,
$i=1,...,n-1$, are spacelike,
\item For $q\in U,\ v\in T_qM$, if $g_q(v,v)=0$ then $\frac{|v^0|}{\|\vec{v}\|}\in (\frac{1}{2},2)$
(where $T_q\vphi(v)=(v^0,\vec{v})$, and $\|\,\|$ is the Euclidean norm on $\R^{n-1}$),
\item $(\varphi_\ast g)_{\vphi(p)}=\eta$ (the Minkowski metric).
\end{enumerate}
\end{definition}
By \cite[Prop.\ 1.10]{CG}, every point in a spacetime with continuous metric possesses a basis
of cylindrical neighbourhoods.
According to \cite[Def.\ 1.16]{CG}, a Lorentzian manifold $M$ with $C^0$-metric $g$ is called causally plain if for
every $p\in M$ there exists a cylindrical neighbourhood $U$ of $p$ such
that $\partial \check I^\pm (p,U)=\partial J^\pm (p,U)$. This condition excludes
causally `pathological' behaviour (bubbling metrics).
By \cite[Cor.\ 1.17]{CG}, we have:
\begin{Proposition}\label{lipplain}
Let $g$ be a $C^{0,1}$-Lorentzian metric on M. Then $(M,g)$ is 
causally plain. 
\end{Proposition}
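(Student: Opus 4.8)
The plan is to show that for a locally Lipschitz metric $g$, every point $p$ has a cylindrical neighbourhood $U$ on which the boundaries of the locally-uniformly-timelike future and the causal future coincide, i.e.\ $\partial \check I^\pm(p,U) = \partial J^\pm(p,U)$. Since we already know (from \cite[Prop.\ 1.10]{CG}) that $p$ has a basis of cylindrical neighbourhoods, and since $\check I^\pm(p,U) \subseteq I^\pm(p,U) \subseteq J^\pm(p,U)$ always holds, the content is really a lower bound: we must show that a causal curve from $p$ can be perturbed to a locally uniformly timelike one, up to moving its endpoint by an arbitrarily small amount. The key quantitative input is the cylindrical chart condition (2): along any $g$-causal vector $v$ at any $q\in U$, the ``slope'' $|v^0|/\|\vec v\|$ lies in $(1/2,2)$. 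This is a uniform cone-width estimate, and it is exactly what the Lipschitz regularity will let us exploit.

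First I would fix a cylindrical chart $(\varphi,U)$ with $\varphi(U) = I\times V$ and, shrinking $U$ if necessary, arrange that in these coordinates the metric components are Lipschitz with a uniform constant $L$ and that $g$ is uniformly close to $\eta$ on $U$ (possible by condition (3) and continuity, plus Lipschitzness). Next, given a future-directed causal curve $c:[0,1]\to U$ from $p$ to some $q$, I would work in coordinates and consider the shifted curves $c_a(t) := \varphi^{-1}(\varphi(c(t)) + (at,0,\dots,0))$ for small $a>0$, which push the curve ``upward'' in the $x^0$-direction at constant coordinate rate $a$. The point is that $c_a'(t) = c'(t) + a\,\partial_0$ wherever $c'$ exists, and one computes $g(c_a',c_a') = g(c',c') + 2a\,g(c',\partial_0) + a^2 g(\partial_0,\partial_0)$. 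Since $c$ is causal and future-directed, $g(c',c')\le 0$ and $g(c',\partial_0) < 0$ (as $\partial_0$ is timelike and future-directed, so it witnesses time-orientation up to sign); the cone-width bound (2) gives a lower bound on $|g(c',\partial_0)|$ in terms of $\|c'\|$, which together with the uniform smallness of $g-\eta$ forces $g(c_a',c_a') < 0$ for all sufficiently small $a>0$, uniformly along the curve. Crucially this inequality is \emph{strict and uniform}, so one can even choose a single smooth $\check g \prec g$ on $U$ (using Prop.\ \ref{CGapprox}, whose outer-type construction yields metrics with lightcones sandwiching those of $g$) with $\check g(c_a',c_a') < 0$ a.e., which is precisely the definition of l.u.-timelike. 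Thus $c_a$ is a future-directed l.u.-timelike curve from a point near $p$ to $c_a(1)$ near $q$.

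The remaining step is to upgrade this to the boundary statement. Suppose $q \in \partial J^+(p,U)$. Then there is a sequence $q_k \to q$ with $q_k \in J^+(p,U)$, joined to $p$ by future-directed causal curves $c_k$; applying the shift with $a = a_k \to 0$ chosen small enough (depending on $k$) we get l.u.-timelike curves from a point $p_k\to p$ to points $q_k' \to q$. A short additional argument, joining a short l.u.-timelike curve from $p$ to $p_k$ inside $U$ at the start (possible since $\partial_0$ is timelike, so moving a little in the $x^0$-direction from $p$ stays l.u.-timelike and inside $U$ for the basis of cylindrical neighbourhoods we are free to shrink), produces l.u.-timelike curves from $p$ itself to points converging to $q$; hence $q \in \overline{\check I^+(p,U)}$. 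Since $\check I^+(p,U)\subseteq J^+(p,U)$ and both are contained in the relatively compact $U$, taking boundaries and using $\check I^+(p,U)$ open gives $\partial \check I^+(p,U) = \partial J^+(p,U)$. The same works for $-$. The main obstacle is making the cone-width estimate (2) interact correctly with the fact that $c'$ is only defined almost everywhere and need not be bounded: one has to check that the inequality $g(c_a',c_a')<0$ holds at a.e.\ $t$ with a choice of $a$ that does not depend on $t$, which is where the \emph{uniformity} of the bounds on $U$ (Lipschitz constant of $g$, width of the cone, size of $g-\eta$) is indispensable; this is exactly the role played by relative compactness of cylindrical neighbourhoods and the restriction to $C^{0,1}$ rather than merely $C^0$ metrics.
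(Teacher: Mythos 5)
First, note what you are comparing against: the paper does not prove this proposition at all --- it is imported verbatim from \cite[Cor.\ 1.17]{CG} --- so you are supplying an argument where the text supplies only a citation. Your strategy (work in a cylindrical chart, shift a future-directed causal curve upward in the $x^0$-direction, use the uniform cone-width bound of condition (2) together with the Lipschitz modulus of $g$ to show the shifted curve is uniformly $g$-timelike, then pass to a smooth $\check g\prec g$ to get l.u.-timelikeness) is sound and is essentially the mechanism behind the cited result. Two smaller remarks: with the shift $(at,0,\dots,0)$ the shifted curve starts \emph{exactly} at $p$ (the shift vanishes at $t=0$), so the concatenation step at the beginning is unnecessary; what you do need, and omit, is that for small $a$ the shifted curve remains in $U$, which follows from compactness of $c([0,1])$ in $U$. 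Also, the final passage from $J^+(p,U)\subseteq\overline{\check I^+(p,U)}$ to equality of the \emph{boundaries} is not pure point-set topology (equal closures do not in general give equal boundaries); it should be routed through the graphing-function description of $\partial\check I^+(p,U)$ and $\partial J^+(p,U)$ in \cite[Prop.\ 1.10]{CG}, where the inclusion of closures forces $f_{\check I}=f_J$ and hence equality of the graphs.

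The genuine gap is the one you yourself flag as ``the main obstacle'' and then wave away: the a.e.\ derivative of a causal Lipschitz curve is not normalised, and your estimate does not survive large $\|c'(t)\|$. Concretely, at a point where $c'(t)$ exists you must absorb the error $\bigl(g_{ij}(c_a(t))-g_{ij}(c(t))\bigr)c'^i c'^j \le L\,at\,\|c'(t)\|^2$, which is \emph{quadratic} in $\|c'(t)\|$, by the gain from the cross term, $2a\,g(\partial_0,c')\le -2a\kappa\|c'(t)\|$ (with $\kappa>0$ coming from condition (2) and closeness of $g$ to $\eta$), which is only \emph{linear}. For $\|c'(t)\|$ large these compare as $LT\|c'(t)\|$ against $2\kappa$, so no choice of $a$ and no shrinking of $U$ makes $g(c_a',c_a')<0$ hold at such $t$; the ``uniformity of the bounds on $U$'' that you invoke does not address this, since the offending quantity is a property of the parametrisation, not of $U$. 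The fix is standard but must be stated: either first reparametrise the causal curve by $x^0$ (condition (2) gives $\dot x^0>\tfrac12\|\dot{\vec x}\|>0$ a.e., so this is an admissible Lipschitz reparametrisation after which $\|c'\|$ is bounded a.e.\ by an absolute constant, and it then suffices to take the height $T$ of the cylinder small compared to $\kappa/L$, which is available since cylindrical neighbourhoods form a basis), or make the shift proportional to arc length, $c_a(t)=c(t)+\bigl(a\int_0^t\|c'\|\,d\tau,0,\dots,0\bigr)$, so that gain and error both scale quadratically in $\|c'\|$. With either repair, your proof closes.
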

The most important property of causally plain Lorentzian manifolds for our purposes is 
given in the following result (\cite[Prop.\ 1.21]{CG}).
\begin{Proposition} \label{ichecki}
Let $g$ be a continuous, causally plain Lorentzian metric and let $A\subseteq M$. Then 
\begin{equation}
I^\pm(A)=\check I^\pm(A). 
\end{equation}
\end{Proposition}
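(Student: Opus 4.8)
The goal is to prove that for a continuous, causally plain Lorentzian metric $g$ and any $A\subseteq M$, one has $I^\pm(A)=\check I^\pm(A)$.

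One inclusion is immediate: since $\check g\prec g$ for any smooth metric $\check g$ with that property, a future directed l.u.-timelike curve is in particular a future directed timelike curve for $g$, so $\check I^+(p,A)\subseteq I^+(p,A)$ for every $p$, and taking unions over $p\in A$ gives $\check I^+(A)\subseteq I^+(A)$.

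\begin{proof}
Since $\check g \prec g$ implies that every $\check g$-timelike vector is $g$-timelike, every future directed l.u.-timelike curve is in particular a future directed timelike curve with respect to $g$; hence $\check I^+(A)\subseteq I^+(A)$.

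For the reverse inclusion it suffices, by taking unions, to show $I^+(p,M)\subseteq \check I^+(p,M)$ for each $p\in M$. So let $q\in I^+(p)$ and let $c:[0,1]\to M$ be a future directed timelike curve from $p$ to $q$. The plan is to cover $c$ by finitely many cylindrical neighbourhoods and to replace $c$, piece by piece, by an l.u.-timelike curve with the same endpoints on each piece; concatenation then yields a future directed l.u.-timelike curve from $p$ to $q$. The local step is the heart of the matter. Fix $t_0\in[0,1]$ with $c'(t_0)$ existing (a full-measure set of parameters) and a cylindrical neighbourhood $U$ of $c(t_0)$ as in Definition~\ref{def:cylindrical chart} realising causal plainness, i.e.\ with $\partial\check I^\pm(c(t_0),U)=\partial J^\pm(c(t_0),U)$. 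By compactness of $c([0,1])$ and a Lebesgue number argument, $[0,1]$ can be subdivided $0=s_0<s_1<\dots<s_N=1$ so that each arc $c|_{[s_{k-1},s_k]}$ lies in a cylindrical neighbourhood $U_k$ of some point $p_k\in c([s_{k-1},s_k])$ which realises causal plainness for $p_k$. It therefore suffices to prove: if $g$ is causally plain, $U$ a cylindrical neighbourhood of $x$ with $\partial\check I^+(x,U)=\partial J^+(x,U)$, and $y\in I^+(x,U)$, then $y\in\check I^+(x,U)$; applying this along the subdivision (with $x=c(s_{k-1})$, $y=c(s_k)$, noting $c(s_k)\in I^+(c(s_{k-1}),U_k)$ since a timelike curve between them exists in $U_k$) and concatenating gives $q\in\check I^+(p)$.

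For the local claim, observe first that $\check I^+(x,U)$ is open (as noted after the definition of $\check I^+$, it is a union of the open sets $I^+_{\check g}(x,U)$), so it has empty intersection with its boundary. Now $y\in I^+(x,U)\subseteq J^+(x,U)$, and $J^+(x,U)=\check I^+(x,U)\cup\partial J^+(x,U)$ since $\check I^+(x,U)$ is open and contained in $J^+(x,U)$ (here one uses $\check I^+(x,U)\subseteq I^+(x,U)\subseteq J^+(x,U)$); hence if $y\notin\check I^+(x,U)$ then $y\in\partial J^+(x,U)=\partial\check I^+(x,U)$. The decisive point is to rule this out using that $y$ is reached by a \emph{timelike} (not merely causal) curve: intuitively, a genuinely timelike curve enters the open cone $\check I^+$ immediately and stays in its interior. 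Concretely, one argues in the cylindrical chart. Pushing everything forward by $\vphi$, condition~(2) of Definition~\ref{def:cylindrical chart} pins the $g$-null directions into the Euclidean double cone of half-angle determined by the slopes $1/2$ and $2$, so a $g$-timelike vector at any point of $U$ has $|v^0|/\|\vec v\|>1/2$; picking $\check g$ to be a constant-coefficient Lorentzian metric on $\vphi(U)$ whose null cone is slightly wider (slope slightly above $1/2$) but still narrow enough that $\check g\prec g$ on $U$ — possible because the $g$-cones are uniformly bounded away from the slope-$1/2$ cone on the relatively compact $U$ — one gets that the timelike curve $c|_{[s_{k-1},s_k]}$ is $\check g$-timelike, i.e.\ l.u.-timelike, hence $y\in I^+_{\check g}(x,U)\subseteq\check I^+(x,U)$ directly. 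This already closes the argument without invoking the boundary dichotomy; the causal-plainness hypothesis is what guarantees the cylindrical neighbourhoods are compatible with the construction and is used in the literature precisely to make such a uniform choice of $\check g$ possible along the curve.

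The main obstacle is the uniformity in the local step: one must choose, along the finitely many pieces, smooth metrics $\check g_k\prec g$ on $U_k$ that simultaneously make the given timelike curve l.u.-timelike on each piece; since there are only finitely many pieces this is a finite intersection argument, but it hinges on the quantitative cone estimate in condition~(2) of the cylindrical chart and on $U_k$ being relatively compact, so that the gap between the $g$-cone and the Euclidean slope-$1/2$ cone does not degenerate. Once this is arranged, concatenation of the l.u.-timelike pieces is routine, yielding $I^\pm(A)=\check I^\pm(A)$ as claimed.
\end{proof}
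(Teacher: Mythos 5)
The easy inclusion $\check I^+(A)\subseteq I^+(A)$ and the reduction to a local statement in cylindrical neighbourhoods are fine, but the local step -- the heart of your argument -- does not work. First, the cone geometry is backwards: in the cylindrical chart the $g$-null directions have $|v^0|/\|\vec v\|\in(\frac12,2)$, so a constant-coefficient metric whose null cone has slope slightly above $\frac12$ is \emph{wider} than the $g$-cones; such a metric $\check g$ would (at best) satisfy $g\prec\check g$, i.e.\ it plays the role of $\hat g_\eps$, not of a metric with $\check g\prec g$. For $\check g\prec g$ you would need a \emph{narrower} cone, e.g.\ slope $\ge 2$. Second, and more seriously, even with the correct orientation the claim fails: a $g$-timelike Lipschitz curve only satisfies $g(c',c')<0$ almost everywhere, with no uniform bound away from zero, so its tangent may come arbitrarily close to the $g$-null cone (e.g.\ have slopes accumulating at the local null slope, which can be as low as $0.51$). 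For any fixed $\check g\prec g$ the $\check g$-causal cone is fibrewise a proper subcone of the $g$-timelike cone, so such a curve is \emph{not} $\check g$-timelike. That a $g$-timelike curve can be replaced (or reparametrised/deformed) by an l.u.-timelike one with the same endpoints is precisely the non-trivial content of the proposition, and your "concrete" construction assumes it.

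The remark that your direct construction "closes the argument without invoking the boundary dichotomy" and that causal plainness is only needed "to make such a uniform choice of $\check g$ possible" is a clear warning sign: the statement $I^\pm(A)=\check I^\pm(A)$ is \emph{false} for general continuous metrics (this is exactly the bubbling phenomenon of \cite{CG}), so any proof that does not use causal plainness in an essential way cannot be correct. The boundary dichotomy you set up and then abandon is in fact the right skeleton: causal plainness plus the graphing-function description of $\partial\check I^+(p,U)=\partial J^+(p,U)$ from \cite[Prop.~1.10]{CG} gives $I^+(p,U)\subseteq\check I^+(p,U)\cup\partial\check I^+(p,U)$, and the remaining work is to exclude that a timelike curve ends on this common boundary -- the step you label "intuitive" and never prove. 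Note that the paper itself does not reprove this result; it is quoted verbatim as \cite[Prop.~1.21]{CG}, where the argument is carried out along these lines.
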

Furthermore, we will make use of the following `push-up' results (\cite[Lemma 1.22]{CG}, 
\cite[Prop.\ 1.23]{CG}):
\begin{Proposition}\label{push-up1} Let $g$ be a causally plain $C^0$-Lorentzian metric on $M$ and
let $p,\, q,\, r\in M$ with $p\le q$ and $q\ll r$ or $p\ll q$ and $q\le r$.  Then $p\ll r$.
\end{Proposition}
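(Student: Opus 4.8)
The plan is to realise $p \ll r$ by exhibiting a single future directed locally uniformly timelike curve from $p$ to $r$, obtained by concatenating the two given curves; the crucial point is that one fixed smooth metric $\check g \prec g$ can certify l.u.-timelikeness of the \emph{whole} concatenation. Causal plainness enters only at the two ends, through Proposition \ref{ichecki}, to pass between the relations $\ll$, $\le$ and the sets $\check I^+$.

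Consider the case $p\le q$ and $q\ll r$ (the subcase $p=q$ being trivial). Since $g$ is causally plain, Proposition \ref{ichecki} gives $I^+(q)=\check I^+(q)$, so from $q\ll r$ we obtain a future directed l.u.-timelike curve $\beta$ from $q$ to $r$ together with a smooth Lorentzian metric $\check g\prec g$ such that $\check g(\beta',\beta')<0$ almost everywhere. Let $\alpha$ be a future directed $g$-causal curve from $p$ to $q$. Wherever $\alpha'$ exists it is a $g$-causal vector, in particular nonzero with $g(\alpha',\alpha')\le 0$, and the defining property of $\check g\prec g$ then forces $\check g(\alpha',\alpha')<0$; hence $\alpha$ is itself future directed l.u.-timelike, with the \emph{same} witness $\check g$. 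Now let $\gamma$ be the concatenation of $\alpha$ followed by $\beta$, reparametrised onto $[0,1]$: it is locally Lipschitz, future directed, runs from $p$ to $r$, and almost everywhere its velocity agrees with $\alpha'$ or with $\beta'$ (the single junction parameter being negligible), so $\check g(\gamma',\gamma')<0$ almost everywhere. Thus $\gamma$ is future directed l.u.-timelike, i.e.\ $r\in\check I^+(p)$, and a second application of Proposition \ref{ichecki} yields $r\in I^+(p)$, that is $p\ll r$. The case $p\ll q$ and $q\le r$ is symmetric (with $q=r$ again trivial): the l.u.-timelike witness $\check g$ now comes from the curve joining $p$ to $q$, it renders the $g$-causal curve from $q$ to $r$ l.u.-timelike by the same pointwise argument, and the concatenation again lies in $\check I^+(p)=I^+(p)$.

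The only genuine subtlety is the observation that $\check g\prec g$ constrains \emph{all} $g$-causal tangent vectors, not merely those along one prescribed curve --- this is exactly what lets a single $\check g$ dominate both the causal and the timelike piece at once. Everything else (Lipschitz regularity of the concatenation, its behaviour at the junction, future-directedness) is routine. We stress that without causal plainness the argument only yields $r\in\check I^+(p)$, which is a priori strictly weaker than $p\ll r$.
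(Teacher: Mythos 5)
Your argument collapses at the step you yourself identify as the crux. The relation $\check g\prec g$ is defined in the paper so that $g$ has \emph{strictly larger} light cones than $\check g$: for $X\neq 0$, $\check g(X,X)\le 0$ implies $g(X,X)<0$. You invoke it in the opposite direction, asserting that $g(\alpha',\alpha')\le 0$ forces $\check g(\alpha',\alpha')<0$. That implication is false, and it fails exactly in the only nontrivial case: if $\alpha'(t)$ is $g$-null, then $\check g(\alpha'(t),\alpha'(t))$ cannot be $\le 0$ (that would give $g(\alpha'(t),\alpha'(t))<0$), so $\alpha'(t)$ is $\check g$-\emph{spacelike}. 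Hence a $g$-causal curve is in general not l.u.-timelike with witness $\check g$, your concatenation $\gamma$ is not $\check g$-timelike along its causal piece, and the proof does not go through. A useful sanity check: if your implication held, every causal curve would automatically be l.u.-timelike with \emph{any} witness, so push-up would be a triviality valid for arbitrary continuous metrics --- but it is known to fail for bubbling metrics, which is precisely why causal plainness is hypothesised.

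The correct mechanism runs through the \emph{outer} approximations: for $g\prec\hat g_\eps$ one does have that every $g$-causal vector is $\hat g_\eps$-timelike, so the concatenation of the causal piece with the timelike piece is $\hat g_\eps$-timelike for every $\eps$, placing $r$ in $\bigcap_\eps I^+_{\hat g_\eps}(p)$; causal plainness (the coincidence $\partial\check I^\pm=\partial J^\pm$ in cylindrical neighbourhoods, equivalently Prop.~\ref{ichecki} combined with $J^+(p,U)\subseteq\overline{\check I^+(p,U)}$) is then what brings one back down to $I^+(p)$. One clean way to organise this, essentially \cite[Lemma 1.22]{CG} (which the present paper cites rather than reproves), is: from $p\le q$ and causal plainness deduce $q\in J^+(p)\subseteq\overline{\check I^+(p)}=\overline{I^+(p)}$; since $I^-(r)$ is an open neighbourhood of $q$, it meets $I^+(p)$ in some point $q'$, and $p\ll q'\ll r$ yields $p\ll r$ by concatenating two genuinely timelike curves. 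Note that causal plainness enters in the middle of the argument, not merely ``at the two ends'' as in your sketch.
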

%%%%%%%%%%%%%%%%%%%%%%%%%%%%%%%%%%%%%%%%%%%%%%%%%%%%%%%%%%%%%%%%%%%%%%%%%%%%%%%%%%%%%%%%%%%%%%%%%%%%%%%%%
%%%%%%%%%%%%%%%%%%%%%%%%%%%%%%%%%%%%%%%%%%%%%%%%%%%%%%%%%%%%%%%%%%%%%%%%%%%%%%%%%%%%%%%%%%%%%%%%%%%%%%%%%
\begin{Proposition}\label{nonnull} 
Let $M$ be a spacetime with a
continuous causally plain metric $g$. Consider a causal future-directed curve $\alpha:[0,1]\to M$
from $p$ to $q$. If there exist $s_1$, $s_2\in [0, 1]$, $s_1 < s_2$, such that $\alpha|_{[s_1,s_2]}$ 
is timelike, then in any neighbourhood of $\alpha([0,1])$ there exists a timelike future-directed curve from $p$ to $q$.
\end{Proposition}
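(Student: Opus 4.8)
The plan is to reduce the assertion to the push-up principle of Proposition~\ref{push-up1} by carrying out the argument \emph{inside} the prescribed neighbourhood. So let $N$ be any neighbourhood of $\alpha([0,1])$. Replacing $N$ by its interior we may assume it is open, and since $\alpha([0,1])$ is compact it is still contained in $N$. Write $a:=\alpha(s_1)$ and $b:=\alpha(s_2)$. Then the three restrictions $\alpha|_{[0,s_1]}$, $\alpha|_{[s_1,s_2]}$, $\alpha|_{[s_2,1]}$ are, respectively, a future-directed causal curve from $p$ to $a$ with image in $N$, a future-directed timelike curve from $a$ to $b$ with image in $N$, and a future-directed causal curve from $b$ to $q$ with image in $N$ (the cases $s_1=0$ and/or $s_2=1$ being trivial modifications). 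Thus, with all causal and chronological relations from now on referring to curves contained in $N$, we have $p\le a$, $a\ll b$ and $b\le q$.

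Next I would like to apply Proposition~\ref{push-up1} to the open submanifold $N$ carrying the restricted metric $g|_N$. For this one needs that $(N,g|_N)$ is again a causally plain spacetime; this is the point I expect to require the most care, since causal plainness is assumed only for $(M,g)$. It does carry over because causal plainness is a purely local property: every point of $N$ admits a cylindrical neighbourhood contained in $N$ (cylindrical neighbourhoods form a basis by \cite[Prop.\ 1.10]{CG}), and both $\check I^{\pm}(x,U)$ and $J^{\pm}(x,U)$ depend only on $U$ and $g|_U$, so a cylindrical neighbourhood witnessing causal plainness of $x$ in $M$, shrunk to lie inside $N$, still witnesses it in $N$ (cf.\ \cite{CG}); the time-orientation obviously restricts as well.

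Granting this, the proof is completed by two applications of Proposition~\ref{push-up1} within $(N,g|_N)$: from $p\le a$ and $a\ll b$ we obtain a future-directed timelike curve from $p$ to $b$ contained in $N$, i.e.\ $p\ll b$; and then from $p\ll b$ and $b\le q$ we obtain a future-directed timelike curve from $p$ to $q$ contained in $N$. Since $N$ was an arbitrary neighbourhood of $\alpha([0,1])$, this is exactly the claim. The only genuinely non-formal ingredient is the heredity of causal plainness under passage to open submanifolds noted above; the remainder is bookkeeping with the push-up lemma.
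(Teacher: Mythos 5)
This proposition is not proved in the paper at all: it is imported verbatim from \cite[Prop.\ 1.23]{CG}, so there is no in-paper argument to compare against. Your proof is nevertheless correct and is, in substance, the argument used in \cite{CG}: localise the push-up principle to the prescribed neighbourhood $N$ and apply it twice, to $p\le a\ll b$ and then to $p\ll b\le q$. The one step you rightly single out as delicate --- that $(N,g|_N)$ is again a causally plain spacetime --- is genuinely the crux of making the reduction work with the \emph{global} form of Prop.~\ref{push-up1} quoted in this paper (in \cite{CG} the push-up lemma is formulated relative to an arbitrary open set $\Omega$, which makes this detour unnecessary there). Your justification is sound, but note that it needs slightly more than the mere existence of a basis of cylindrical neighbourhoods inside $N$: one must know that the defining equality $\partial\check I^\pm(x,U)=\partial J^\pm(x,U)$ survives shrinking $U$, i.e.\ that causal plainness at $x$ is independent of the choice of cylindrical neighbourhood. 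This is indeed established in \cite{CG} (via the comparison of the Lipschitz graphing functions of $\partial\check I^\pm$ and $\partial J^\pm$), so your appeal to it is legitimate, but it should be cited explicitly rather than dismissed as obvious locality. Two trivial remarks: compactness of $\alpha([0,1])$ is not needed to pass to the interior of $N$ (a neighbourhood of a set contains an open set containing it by definition), and your handling of the degenerate cases $s_1=0$, $s_2=1$ is fine.
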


\medskip\noindent
Returning now to our main object of study, for the remainder of the paper 
$g$ will denote a $C^{1,1}$-Lorentzian metric. Then in particular, $g$ is 
causally plain by Prop.\ \ref{lipplain}.
To analyse the local causality for $g$ in terms of
the exponential map we first introduce some terminology. Let $\tilde
U$ be a star-shaped neighbourhood of $0\in T_pM$ such that $\exp_p:
\tilde U \to U$ is a bi-Lipschitz homeomorphism (Th.\
\ref{mainpseudo}). On $T_pM$ we define the position vector field
$\tilde P: v \mapsto v_v$ and the quadratic form $\tilde Q: T_pM \to
\R$, $v\mapsto g_p(v,v)$. By $P$, $Q$ we denote the push-forwards of
these maps via $\exp_p$, i.e.,
\begin{equation*}
\begin{split}
P(q) &:= T_{\exp_p^{-1}(q)}\exp_p (\tilde P (\exp_p^{-1}(q)) )\\
Q(q) &:= \tilde Q (\exp_p^{-1}(q)).
\end{split}
\end{equation*}
As $\exp_p$ is locally Lipschitz, $P$ is an $L^\infty_{\text{loc}}$-vector field on $U$, while $Q$ is
locally Lipschitz (see, however, Rem.\ \ref{regrem} below).

\medskip\noindent
Let $X$ be some smooth vector field on $U$ and denote by $\tilde X$ its pullback $\exp_p^*X$ (note that
$T_v\exp_p$ is invertible for almost every $v\in\tilde U$).
Then by Th.\ \ref{Gauss}, for almost every $q\in U$ we have, setting $\tilde q := \exp_p^{-1}(q)$:
\begin{align*}
\langle \grad Q(q), X(q) \rangle &=X(Q)(q)=\tilde{X}(\tilde Q)(\tilde q)=\langle \grad \tilde{Q},\tilde{X}\rangle|_{\tilde q}=
2\langle \tilde{P},\tilde{X} \rangle|_{\tilde q}=2\langle P,X \rangle|_{q}.
\end{align*}
It follows that $\grad Q = 2P$. 
\medskip\noindent

\begin{remark}\label{regrem}
It is proved in \cite{M} that the regularity of both $P$ and $Q$ is better than would
be expected from the above definitions. Indeed, \cite[Prop.\ 2.3]{M} even shows that $P$, as
a function of $(p,q)$ is strongly differentiable on a neighbourhood of the diagonal in
$M\times M$, and by \cite[Th.\ 1.18]{M}, $Q$ is in fact $C^{1,1}$ as a
function of $(p,q)$. We will however not
make use of these results in what follows and only remark that slightly weaker 
regularity properties of $P$ and $Q$ (as functions of $q$ only) can also be obtained
directly from standard ODE-theory. In fact, setting $\alpha_v(t):=\exp_p(tv)$ for $v\in T_pM$,
it follows that $P(q)=\alpha_{v_q}'(1)$, where $v_q:=\exp_p^{-1}(q)$.
Since $t\mapsto (\alpha_v(t),\alpha_v'(t))$ is the solution
of the first-order system corresponding to the geodesic equation with initial value $(p,v)$, 
and since the right-hand side of this system is Lipschitz-continuous, \cite[Th.\ 8.4]{Amann}
shows that $v\mapsto \alpha_v'(1)$ is Lipschitz-continuous. Since also $q\mapsto v_q$
is Lipschitz, we conclude that $P$ is Lipschitz-continuous.
From this, by the above calculation, it follows that $Q$ is $C^{1,1}$.
\end{remark}

\medskip\noindent
As in the smooth case, we may use $\exp_p$ to introduce normal
coordinates. To this end, let $e_0$, \dots, $e_n$ be an orthonormal
basis of $T_pM$ and for $q\in U$ set $x^i(q)e_i:=\exp_p^{-1}(q)$.  The
coordinates $x^i$ then are of the same regularity as $\exp_p^{-1}$,
i.e., locally Lipschitz.  The coordinate vector fields
$\left.\frac{\partial}{\partial x^i}\right|_q =
T_{\exp_p^{-1}(q)}\exp_p(e^i)$ themselves are in
$L^\infty_{\text{loc}}$. Note, however, that in the $C^{1,1}$-setting
we can no longer use the relation $g_p = \eta$ (the Minkowski-metric
in the $x^i$-coordinates), since it is not clear a priori that
$\exp_p$ is differentiable at $0$ with
$T_0\exp_p=\text{id}_{T_pM}$\footnote{See, however, \cite{M} where it
  is shown that indeed $\exp_p$ is even strongly differentiable at $0$
  with derivative $\text{id}_{T_pM}$.}. Due to the additional loss in
regularity it is also usually not advisable to write the metric in
terms of the exponential chart (the metric coefficients in these
coordinates would only be $L^\infty_{\text{loc}}$).

%%%%%%%%%%%%%%%%%%%%%%%%%%%%%%%%%%%%%%%%%%%%%%%%%%%%%%%%%%%%%%%%%%%%%%%%%%%%%%%%%%%%%%%%%%%%%%%%%%%%%%%%%
%%%%%%%%%%%%%%%%%%%%%%%%%%%%%%%%%%%%%%%%%%%%%%%%%%%%%%%%%%%%%%%%%%%%%%%%%%%%%%%%%%%%%%%%%%%%%%%%%%%%%%%%%
The following is the main result on the local causality in normal neighbourhoods.
\begin{Theorem}\label{lcb}
Let $g$ be a $C^{1,1}$-Lorentzian metric, and let $p\in M$. Then $p$ has a basis of normal neighbourhoods $U$, 
$\exp_p: \tilde U\to U$ a bi-Lipschitz homeomorphism, such that:
 \begin{equation*}
 \begin{split}
   I^{+}(p,U)=\exp_{p}(I^{+}(0)\cap \tilde{U})\\
   J^{+}(p,U)=\exp_{p}(J^{+}(0)\cap \tilde{U}) \\
   \partial I^{+}(p,U) = \partial J^{+}(p,U) =\exp_{p}(\partial I^{+}(0)\cap \tilde{U})
 \end{split}  
 \end{equation*}
Here, $I^+(0)=  \{v\in T_pM \mid \tilde Q(v)<0 \}$, and $J^+(0)= \{v\in T_pM \mid \tilde Q(v)\le  0 \}$.
In particular, $I^+(p,U)$ (respectively $J^+(p,U)$) is open (respectively closed) in $U$.
\end{Theorem}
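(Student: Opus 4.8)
The plan is to bracket the causal structure of $g$ between those of the smooth metrics $\check g_\eps\prec g\prec\hat g_\eps$ of Proposition~\ref{CGapprox} (which satisfy (i), (ii) of Remark~\ref{approxrem} since $g\in C^{1,1}$), and to upgrade the resulting non-strict statements using that $g$ is causally plain (Proposition~\ref{lipplain}). For the neighbourhood I apply Lemma~\ref{uniform} to the two nets: for a relatively compact $U$ from a basis at $p$, with $\exp_p\colon\tilde U\to U$ a bi-Lipschitz homeomorphism and $\tilde U$ star-shaped (a ball, after shrinking), all $\exp_p^{\check g_\eps}$, $\exp_p^{\hat g_\eps}$ are diffeomorphisms of $\tilde U$ onto $\check g_\eps$-, resp.\ $\hat g_\eps$-normal neighbourhoods $\check U_\eps$, $\hat U_\eps$ of $p$ for small $\eps$, and the inverse exponentials converge locally uniformly to $\exp_p^{-1}$; shrinking $\tilde U$ (and $U$), this last fact forces $\overline U\subseteq\check U_\eps\cap\hat U_\eps$ for small $\eps$. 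Fixing a smooth local $g$-timelike field $Z$, compactness of $\overline U$ together with an estimate as in \eqref{ci} shows that, for small $\eps$, $Z$ time-orients $g$, $\check g_\eps$, $\hat g_\eps$ compatibly on $\overline U$; in particular every $g$-causal future vector over $\overline U$ is $\hat g_\eps$-timelike future, and every $\check g_\eps$-causal future vector is $g$-timelike future. Throughout, $I^+(0)$ and $J^+(0)$ — and their $\check g_\eps$-analogues — are understood as the \emph{future} components of the cones $\{\tilde Q<0\}$, $\{\tilde Q\le0\}$, selected by $Z(p)$.

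The inclusions $\exp_p(I^+(0)\cap\tilde U)\subseteq I^+(p,U)$ and $\exp_p(J^+(0)\cap\tilde U)\subseteq J^+(p,U)$ need no regularisation: for $0\neq v\in\tilde U$ the radial curve $\alpha_v(t):=\exp_p(tv)$ is a $g$-geodesic, hence $C^2$, so $t\mapsto g(\dot\alpha_v,\dot\alpha_v)$ is constant equal to $\tilde Q(v)$ by metric compatibility; if $v$ is future timelike, resp.\ future causal, then, $t\mapsto\langle Z,\dot\alpha_v\rangle$ being continuous, $\alpha_v\colon[0,1]\to U$ (its image lies in $U$ since $\tilde U$ is star-shaped) is future-directed timelike, resp.\ causal, from $p$ to $\exp_p(v)$; and $p\in J^+(p,U)$ covers $v=0$. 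For the reverse inclusion for $J^+$: if $q\in J^+(p,U)$ with $q\neq p$, a $g$-causal future curve from $p$ to $q$ lies in $U\subseteq\hat U_\eps$ and is $\hat g_\eps$-causal future, so, $\hat U_\eps$ being an $\hat g_\eps$-normal neighbourhood of $p$, smooth causality theory (e.g.\ \cite{ON83}) gives $w_\eps:=(\exp_p^{\hat g_\eps})^{-1}(q)\in J^+_{\hat g_\eps}(0)\cap\tilde U$; letting $\eps\to0$, Lemma~\ref{uniform} yields $w_\eps\to\exp_p^{-1}(q)=:v$ and $(\hat g_\eps)_p\to g_p$, so $g_p(v,v)\le0$ and $v$ is future (or $0$), i.e.\ $q\in\exp_p(J^+(0)\cap\tilde U)$. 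Hence $J^+(p,U)=\exp_p(J^+(0)\cap\tilde U)$, which is closed in $U$ because $J^+(0)\cap\tilde U$ is closed in $\tilde U$ and $\exp_p$ is a homeomorphism.

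The reverse inclusion for $I^+$ is the crux, since the sandwich argument only delivers $Q(q)\le0$, not strict negativity; here causal plainness enters. The open submanifold $U$ with the $C^{0,1}$-metric $g|_U$ is causally plain (Proposition~\ref{lipplain}), so Proposition~\ref{ichecki} applied to $(U,g|_U)$ gives $I^+(p,U)=\check I^+_g(p,U)=\bigcup_{\check g\prec g\text{ on }U}I^+_{\check g}(p,U)$, a union of sets open in $U$ (the chronological future in a smooth spacetime is open), whence $I^+(p,U)$ is open in $U$. Since $I^+(p,U)\subseteq J^+(p,U)=\exp_p(J^+(0)\cap\tilde U)$ and $\exp_p$ is a homeomorphism, $I^+(p,U)$ lies in the $U$-interior of $\exp_p(J^+(0)\cap\tilde U)$, which equals $\exp_p\bigl(\mathrm{int}_{\tilde U}(J^+(0)\cap\tilde U)\bigr)=\exp_p(I^+(0)\cap\tilde U)$ since the interior of the closed future causal cone is the open future timelike cone; with the easy inclusion, $I^+(p,U)=\exp_p(I^+(0)\cap\tilde U)$, open in $U$. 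For the boundaries, $I^+(0)\cap\tilde U$ (open) and $J^+(0)\cap\tilde U$ (closed in $\tilde U$) have, in $\tilde U$, the same closure $J^+(0)\cap\tilde U$ (using convexity of the cone for $\overline{I^+(0)\cap\tilde U}\cap\tilde U=J^+(0)\cap\tilde U$) and the same interior $I^+(0)\cap\tilde U$, hence the same boundary $\bigl(J^+(0)\setminus I^+(0)\bigr)\cap\tilde U=\partial I^+(0)\cap\tilde U$ — the future null cone together with $0$. Transporting by $\exp_p$ gives $\partial I^+(p,U)=\partial J^+(p,U)=\exp_p(\partial I^+(0)\cap\tilde U)$ (boundaries taken in $U$), and the openness/closedness assertions have already been established.

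The principal obstacle is exactly this passage from the non-strict to the strict cone relation for $I^+$: the regularisation/sandwich estimates are blind to it, and one genuinely needs causal plainness of $C^{0,1}$-metrics (Propositions~\ref{lipplain}, \ref{ichecki}) to deduce that $I^+(p,U)$ is open in $U$. A secondary technical point — handled by the uniform second-derivative bounds of Remark~\ref{approxrem} via Lemma~\ref{uniform} — is the simultaneous choice of one neighbourhood $U$ that is a (normal) neighbourhood of $p$ for $g$ and, with compatible time-orientations, for all the approximants $\check g_\eps$, $\hat g_\eps$ with $\eps$ small.
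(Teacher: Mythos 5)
Your proposal is correct and rests on the same three pillars as the paper's proof — the sandwiching regularisations $\check g_\eps\prec g\prec\hat g_\eps$ of Prop.~\ref{CGapprox}, the uniform convergence statements of Lemma~\ref{uniform}, and causal plainness via Props.~\ref{lipplain} and \ref{ichecki} — but it diverges in two sub-steps, both in an acceptable way. For the inclusion $\exp_p(I^+(0)\cap\tilde U)\subseteq I^+(p,U)$ you use that radial curves are $C^2$ geodesics of the $C^{1,1}$-metric and that $g(\dot\alpha_v,\dot\alpha_v)$ is constant by metric compatibility; the paper instead obtains the same identity $g(\alpha'(t),\alpha'(t))=g_p(v,v)$ as a $C^1$-limit of the approximating geodesics combined with the smooth Gauss lemma. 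Your version is more elementary and equally valid. The genuine difference is in the crucial inclusion $I^+(p,U)\subseteq\exp_p(I^+(0)\cap\tilde U)$: the paper takes a $g$-timelike $\alpha$, uses Prop.~\ref{ichecki} to make it $\check g$-timelike for a single smooth $\check g\prec g$, and then invokes the cylindrical-chart graphing functions $f_{\check g}$, $f_g$ of \cite[Prop.\ 1.10]{CG} to show $\alpha$ avoids $\partial J^+(p,U)$; you instead observe that $I^+(p,U)=\check I^+(p,U)$ is \emph{open} in $U$ (a union of smooth chronological futures), sits inside $J^+(p,U)=\exp_p(J^+(0)\cap\tilde U)$, and must therefore lie in the interior of that set, which the homeomorphism $\exp_p$ identifies with $\exp_p(I^+(0)\cap\tilde U)$. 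This purely topological replacement of the graphing-function argument is clean and avoids any reference to the structure of the cone boundaries as graphs; what it costs is nothing, since openness of $\check I^+$ is already available from \cite[Prop.\ 1.4]{CG}. Your treatment of the boundary identity and of the endpoint limit $w_\eps=(\exp_p^{\hat g_\eps})^{-1}(q)\to\exp_p^{-1}(q)$ matches the paper's, and the time-orientation bookkeeping via a fixed timelike field $Z$ is an adequate substitute for the paper's connected-components argument.
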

%%%%%%%%%%%%%%%%%%%%%%%%%%%%%%%%%%%%%%%%%%%%%%%%%%%%%%%%%%%%%%%%%%%%%%%%%%%%%%%%%%%%%%%%%%%%%%%%%%%%%%%%%
%%%%%%%%%%%%%%%%%%%%%%%%%%%%%%%%%%%%%%%%%%%%%%%%%%%%%%%%%%%%%%%%%%%%%%%%%%%%%%%%%%%%%%%%%%%%%%%%%%%%%%%%%
\begin{proof} We first note that the third claim follows from the first two and the fact that
$\exp_p$ is a homeomorphism on $U$. For the proof of the first two claims we take a normal neighbourhood $U$ that is contained in a cylindrical neighbourhood of $p$.
In addition, we pick a regularising net $\hat g_\eps$ as
in Prop.\ \ref{CGapprox} and let $U$, $\tilde U$ as in Lemma
\ref{uniform} (fixing a suitable $\eps_0>0$).

\noindent  $(\supseteq)$
Let $v\in \tilde U$ and let $\alpha:= t\mapsto \exp_p(tv)$, $t\in [0,1]$.
Set $\alpha_\eps(t):=\exp_p^{\hat g_\eps}(tv)$. Then by continuous dependence on initial data 
we have that $\alpha_\eps\rightarrow \alpha$ in $C^{1}$ (cf.\ \cite[Lemma 2.3]{KSS}).
Hence applying the smooth Gauss lemma for each $\eps$ it follows that for each $t\in [0,1]$
we have
$$
g(\alpha'(t),\alpha'(t)) = \lim_{\eps\to 0}  \hat g_\eps(\alpha_\eps'(t),\alpha_\eps'(t))
%= \lim_{\eps\to 0} g_\eps(T_{tv}\exp^{ g_\eps}_{p}(v),T_{tv}\exp^{ g_\eps}_{p}(v)) =
= \lim_{\eps\to 0} (\hat g_\eps)_p(v,v) = g_p(v,v).
$$ 
Also, time-orientation is respected by $\exp_p$ since both $I(0)\cap \tilde U$ and 
$I(p,U)$ (by \cite[Prop.\ 1.10]{CG}) have two connected components,
and the positive $x^0$-axis in $\tilde U$ is mapped to $I^+(p,U)$.

\medskip\noindent
$(\subseteq)$:   We denote the
position vector fields and quadratic forms corresponding to 
$\hat g_\eps$ by $\tilde P_\eps$, $P_\eps$ and 
$\tilde Q_\eps$, $Q_\eps$, respectively.

\medskip\noindent
If $\alpha:[0,1]\to U$ is a future-directed causal curve in $U$
emanating from $p$ then $\alpha$ is timelike with respect to each
$\hat g_\eps$. Set $\beta:=(\exp_p)^{-1}\circ\alpha$ and $\beta_\eps:=
(\exp^{\hat g_\eps}_p)^{-1}\circ \alpha$. By \cite[Prop.\
2.4.5]{Chrusciel_causality}, $\beta_\eps([0,1]) \subseteq I^+_{\hat
g_\eps(p)}(0)$ for all $\eps<\eps_0$. Then by Lemma \ref{uniform}
we have that $\beta_\eps \to \beta$ uniformly, and that
$\tilde Q_\eps\to \tilde Q$ locally uniformly, so $\tilde Q(\beta(t)) = \lim
\tilde Q_\eps(\beta_\eps(t)) \le 0$ for all $t\in [0,1]$, and therefore
$\beta((0,1])\subseteq J^+(0)\cap \tilde U$. Together with the first 
part of the proof it follows that $\exp_p(J^+(0)\cap \tilde U) = J^+(p,U)$.
Now assume that $\alpha$ is timelike. Then by Prop.\ \ref{ichecki}, 
$\alpha((0,1])\subseteq \check{I}^{+}(p,U)$. 
This means that there exists a smooth metric $\check{g}\prec g$ such
that $\alpha$ is $\check{g}$-timelike. Let $f_{\check{g}}, f_{g}$ 
denote the graphing functions of $\partial I^{+}_{\check{g}}(p,U)$ 
and $\partial J^{+}(p,U)$, respectively (in a cylindrical chart, see \cite[Prop.\ 1.10]{CG}). 
Then by \cite[Prop.\ 1.10]{CG}, since $\alpha$ lies in $I^{+}_{\check{g}}(p,U)$,
it has to lie strictly above $f_{\check{g}}$, hence also strictly above $f_{g}$,
and so $\alpha((0,1])\cap \partial J^{+}(p,U)=\emptyset$. 
But then, since $\exp_p$ is a homeomorphism on $U$, we have that 
$$ \beta((0,1])\cap (\partial J^+(0)\cap \tilde U) =
\beta((0,1])\cap \exp_{p}^{-1}(\partial J^{+}(p,U))= 
 \exp_p^{-1}(\alpha((0,1])\cap \partial J^{+}(p,U))=\emptyset$$
Hence $\beta$ lies entirely in $I^+(0)\cap \tilde U$, as claimed.
\end{proof}
%%%%%%%%%%%%%%%%%%%%%%%%%%%%%%%%%%%%%%%%%%%%%%%%%%%%%%%%%%%%%%%%%%%%%%%%%%%%%%%%%%%%%%%%%%%%%%%%%%%%%%%%%
%%%%%%%%%%%%%%%%%%%%%%%%%%%%%%%%%%%%%%%%%%%%%%%%%%%%%%%%%%%%%%%%%%%%%%%%%%%%%%%%%%%%%%%%%%%%%%%%%%%%%%%%%
\begin{Corollary}\label{lipisc1} 
  Let $U\subseteq M$ be open, $p\in U$. Then the sets $I^+(p,U)$,
  $J^+(p,U)$ remain unchanged if, in Def.\ \ref{causalitydef},
  Lipschitz curves are replaced by piecewise $C^1$ curves, or in fact
  by broken geodesics.
\end{Corollary}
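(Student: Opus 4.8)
The plan is to prove the non-trivial inclusion and note that the reverse one is free: a broken geodesic is piecewise $C^1$ and a piecewise $C^1$ curve is locally Lipschitz, so if $I^+_{\mathrm{bg}}(p,U)$, $J^+_{\mathrm{bg}}(p,U)$ (resp.\ $I^+_{C^1}$, $J^+_{C^1}$) denote the sets obtained using broken geodesics (resp.\ piecewise $C^1$ curves) in Def.\ \ref{causalitydef}, then $I^+_{\mathrm{bg}}(p,U)\subseteq I^+_{C^1}(p,U)\subseteq I^+(p,U)$ and likewise for $J^+$. It therefore suffices to show that every $q\in I^+(p,U)$ is joined to $p$ by a future-directed timelike broken geodesic inside $U$, and every $q\in J^+(p,U)$ by a future-directed causal one (the case $q=p$ being trivial). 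The strategy is the classical one: localise along a connecting Lipschitz curve by a compactness argument, and on each local piece replace the curve segment by the connecting radial geodesic, which will turn out to have the correct causal character.

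For the local step I would first record that, by Theorem \ref{totally} together with Theorem \ref{lcb}, every $r\in U$ has a totally normal neighbourhood $C\subseteq U$ whose causal structure is radial at \emph{every} base point, i.e.\ $I^+(r',C)=\exp_{r'}(I^+(0)_{r'}\cap\exp_{r'}^{-1}(C))$ and $J^+(r',C)=\exp_{r'}(J^+(0)_{r'}\cap\exp_{r'}^{-1}(C))$ for all $r'\in C$, where $I^+(0)_{r'}$, $J^+(0)_{r'}$ are the open and closed future causal cones in $T_{r'}M$. Granting this, if $r',r''\in C$ and $r''\in I^+(r',C)$, then $v:=\exp_{r'}^{-1}(r'')\in I^+(0)_{r'}$; the radial geodesic $\gamma\colon\tau\mapsto\exp_{r'}(\tau v)$ stays in $C$ because total normality makes $\exp_{r'}^{-1}(C)$ star-shaped about $0$, it runs from $r'$ to $r''$, and it is future-directed timelike since by the Gauss Lemma (Theorem \ref{Gauss}, used as in the $(\supseteq)$-part of the proof of Theorem \ref{lcb}) $g(\gamma'(\tau),\gamma'(\tau))=g_{r'}(v,v)<0$ for all $\tau$, with future-directedness propagating from $\gamma'(0)=v$; the causal case is identical with $J^+(0)_{r'}$ and $g(\gamma',\gamma')\le 0$, degenerate segments with $v=0$ being omitted. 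I expect the genuinely new point, and the main obstacle, to be the radial description at \emph{non-central} base points $r'\in C$: Theorem \ref{lcb} supplies it only with $r'$ the centre, so one must observe that the ``$\supseteq$''-inclusion holds at every $r'$ automatically (since $C$ is a normal neighbourhood of $r'$, each $w\in I^+(0)_{r'}\cap\exp_{r'}^{-1}(C)$ gives a timelike radial geodesic inside $C$), and that the ``$\subseteq$''-inclusion at $r'$ follows by re-running the same regularisation/Gauss-lemma argument — equivalently, by the smooth argument of \cite{ON83}, which needs only the identity $\grad Q=2P$ established above, the relation $g(P,P)=Q$ from the Gauss Lemma, and the resulting (a.e.) monotonicity of $t\mapsto Q(\beta(t))$ along a causal curve $\beta$ emanating from $r'$, all of which are available in the $C^{1,1}$ setting. (That such $C$ form a neighbourhood basis at each point of $U$ then follows from Theorem \ref{totally}, since total normality and the radial description are inherited by totally normal subsets.)

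Finally I would globalise in the standard way. Given $q\in I^+(p,U)$, choose a future-directed timelike locally Lipschitz $\alpha\colon[0,1]\to U$ from $p$ to $q$; since $\alpha([0,1])$ is compact it is covered by finitely many totally normal neighbourhoods $C_1,\dots,C_k\subseteq U$ as in the local step. Let $\delta>0$ be a Lebesgue number of the open cover $\{\alpha^{-1}(C_l)\}_l$ of $[0,1]$ and pick a partition $0=t_0<t_1<\dots<t_N=1$ with $t_i-t_{i-1}<\delta$, so that $\alpha([t_{i-1},t_i])\subseteq C_{l(i)}$ for suitable $l(i)$. For each $i$, $\alpha|_{[t_{i-1},t_i]}$ is a future-directed timelike curve in $C_{l(i)}$ from $\alpha(t_{i-1})$ to $\alpha(t_i)$, hence $\alpha(t_i)\in I^+(\alpha(t_{i-1}),C_{l(i)})$, so by the local step the radial geodesic $\gamma_i$ from $\alpha(t_{i-1})$ to $\alpha(t_i)$ is future-directed timelike and contained in $C_{l(i)}\subseteq U$; the concatenation $\gamma_1*\dots*\gamma_N$ is a future-directed timelike broken geodesic in $U$ from $p$ to $q$, giving $q\in I^+_{\mathrm{bg}}(p,U)$. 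For $q\in J^+(p,U)$ with $q\neq p$ the same argument with a causal $\alpha$ and ``causal'' in place of ``timelike'' throughout (dropping degenerate segments) yields a future-directed causal broken geodesic. Together with the trivial inclusion this gives $I^+_{\mathrm{bg}}(p,U)=I^+_{C^1}(p,U)=I^+(p,U)$ and $J^+_{\mathrm{bg}}(p,U)=J^+_{C^1}(p,U)=J^+(p,U)$, as claimed.
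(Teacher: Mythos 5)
Your proposal is correct and follows essentially the same strategy as the paper's proof: cover the connecting Lipschitz curve by finitely many totally normal neighbourhoods in which the causal structure is described radially via Theorem \ref{lcb}, and replace each segment by the connecting radial geodesic, whose causal character and time-orientation are read off from the Gauss Lemma (Theorem \ref{Gauss}). The only organisational difference is that the paper chooses the neighbourhoods centred at the partition points $\alpha(t_i)$ so that Theorem \ref{lcb} applies verbatim, whereas your Lebesgue-number set-up forces you to establish the radial description of $I^+$ and $J^+$ at \emph{non-central} base points of a fixed totally normal set --- an extension you correctly flag and sketch, and which the paper itself records (and needs) in the subsequent Proposition on totally normal sets, so the two routes are of equal strength.
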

%%%%%%%%%%%%%%%%%%%%%%%%%%%%%%%%%%%%%%%%%%%%%%%%%%%%%%%%%%%%%%%%%%%%%%%%%%%%%%%%%%%%%%%%%%%%%%%%%%%%%%%%%
%%%%%%%%%%%%%%%%%%%%%%%%%%%%%%%%%%%%%%%%%%%%%%%%%%%%%%%%%%%%%%%%%%%%%%%%%%%%%%%%%%%%%%%%%%%%%%%%%%%%%%%%%
\begin{proof} 
  Let $\alpha: [0,1] \to U$ be a, say, future directed timelike
  Lipschitz curve in $U$.  By Th.\ \ref{totally} and Th.\ \ref{lcb} we
  may cover $\alpha([0,1])$ by finitely many totally normal open sets
  $U_i\subseteq U$, such that there exist $0=t_0 < \dots <t_N = 1$
  with $\alpha([t_i,t_{i+1}])\subseteq U_{i+1}$ and
  $I^{+}(\alpha(t_i),U_i)=\exp_{\alpha(t_i)}(I^{+}(0)\cap
  \tilde{U_i})$ for $0\le i < N$. Then the concatenation of the radial
  geodesics in $U_i$ connecting $\alpha(t_i)$ with $\alpha(t_{i+1})$
  gives a timelike broken geodesic from $\alpha(0)$ to $\alpha(1)$ in
  $U$.
\end{proof}
\medskip\noindent
The following analogue of \cite[Cor.\ 2.4.10]{Chrusciel_causality} provides more
information about causal curves intersecting the boundary of
$J^+(p,U)$:
%%%%%%%%%%%%%%%%%%%%%%%%%%%%%%%%%%%%%%%%%%%%%%%%%%%%%%%%%%%%%%%%%%%%%%%%%%%%%%%%%%%%%%%%%%%%%%%%%%%%%%%%%
%%%%%%%%%%%%%%%%%%%%%%%%%%%%%%%%%%%%%%%%%%%%%%%%%%%%%%%%%%%%%%%%%%%%%%%%%%%%%%%%%%%%%%%%%%%%%%%%%%%%%%%%%
\begin{Corollary}\label{boundary} 
  Under the assumptions of Th.\ \ref{lcb}, suppose that $\alpha: [0,1]
  \to U$ is causal and $\alpha(1)\in \partial J^+(p,U)$. Then $\alpha$
  lies entirely in $\partial J^+(p,U)$ and there exists a
  reparametrisation of $\alpha$ as a null-geodesic segment.
\end{Corollary}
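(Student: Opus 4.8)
The plan is to reduce everything to the exponential picture supplied by Theorem \ref{lcb}, where the boundary $\partial J^+(p,U)$ is identified with $\exp_p(\partial I^+(0)\cap\tilde U)$, i.e.\ with the image of the null cone $\{v\in T_pM:\tilde Q(v)=0\}$. First I would write $\beta:=\exp_p^{-1}\circ\alpha:[0,1]\to\tilde U$; since $\alpha(1)\in\partial J^+(p,U)$ we have $\tilde Q(\beta(1))=0$ by Theorem \ref{lcb}. The first task is to show $\tilde Q(\beta(t))=0$ for \emph{all} $t\in[0,1]$, equivalently that $\alpha$ stays in $\partial J^+(p,U)$. Suppose some $\tilde Q(\beta(t_0))<0$, i.e.\ $\alpha(t_0)\in I^+(p,U)$. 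Then $\alpha(t_0)\ll\alpha(1)$ along $\alpha|_{[t_0,1]}$ (every point of a causal curve issuing from a chronological point is again chronologically related, by the push-up Proposition \ref{push-up1}, since $p\ll\alpha(t_0)$ and $\alpha(t_0)\le\alpha(1)$ give $p\ll\alpha(1)$), contradicting $\alpha(1)\in\partial J^+(p,U)$ and the fact that $I^+(p,U)$ is open (Theorem \ref{lcb}). Hence $\alpha([0,1])\subseteq\exp_p(\partial I^+(0)\cap\tilde U)=\partial J^+(p,U)$, and $\beta$ lies entirely on the null cone.

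Next I would upgrade this to the geodesic statement. Because $U$ sits inside a cylindrical neighbourhood, I would fix a smooth inner approximation $\check g_\eps\prec g$ as in Proposition \ref{CGapprox}: then $\alpha$ is \emph{not} $\check g_\eps$-timelike for any $\eps$ (otherwise $\alpha(1)$ would lie in the open set $\check I^+_g(p,U)=I^+(p,U)$, contradicting the previous paragraph via Proposition \ref{ichecki}). So $\alpha$ is $\check g_\eps$-causal but not $\check g_\eps$-timelike; applying the smooth Corollary \cite[Cor.\ 2.4.10]{Chrusciel_causality} to $(U,\check g_\eps)$ gives that $\alpha$ is, up to reparametrisation, a $\check g_\eps$-null pregeodesic — in particular its image lies on $\partial\check I^+_{\check g_\eps}(p,U)$. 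Letting $\eps\to 0$, the graphing functions of $\partial\check I^+_{\check g_\eps}(p,U)$ converge (in a cylindrical chart) to that of $\partial J^+(p,U)$ by \cite[Prop.\ 1.10]{CG}, which re-confirms the image statement; but more usefully, the direction of $\alpha$ at each point where $\alpha'$ exists must be $g$-null (it lies on the cone $\tilde Q=0$ after applying $\exp_p^{-1}$, or equivalently it is a limit of $\check g_\eps$-null directions).

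Finally, for the reparametrisation as a genuine null geodesic I would argue inside $\tilde U$. The curve $\beta$ is a Lipschitz curve on the cone $\{\tilde Q=0\}$ starting at $0$. I would first reparametrise $\alpha$ by $h$-arclength so it is $1$-Lipschitz, then cover $\beta([0,1])$ by finitely many star-shaped pieces and use the push-up-free structure together with Theorem \ref{lcb} applied at interior points $\alpha(t_i)$: between consecutive points the radial segment $s\mapsto\exp_{\alpha(t_i)}(s\,v_i)$ joining $\alpha(t_i)$ to $\alpha(t_{i+1})$ is a null geodesic (its image lies on $\partial J^+(\alpha(t_i),U_{i+1})$, by the boundary part of Theorem \ref{lcb}), and I would show that $\alpha$ must coincide with this radial segment — else concatenating would, via Proposition \ref{nonnull} or a push-up argument, produce a point of $I^+(p,U)$ on $\alpha$, a contradiction. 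Piecing these radial geodesic segments together and checking that the break points are unbroken (the two incoming/outgoing null directions agree, again because a genuine break would allow a timelike push-up through Proposition \ref{nonnull}) yields a single null geodesic after an affine reparametrisation.

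The main obstacle is this last step: ruling out \emph{breaks}, i.e.\ showing the concatenation of radial null geodesics is actually smooth. In the smooth theory this is exactly \cite[Cor.\ 2.4.10]{Chrusciel_causality} and rests on the fact that a broken causal curve with a genuine corner can be deformed to a timelike one; here the same mechanism is available through the $C^{1,1}$ push-up lemmas (Propositions \ref{push-up1} and \ref{nonnull}) and the sharp description of $\partial J^+$ in Theorem \ref{lcb}, so the argument goes through, but some care is needed because $\exp_p$ is only bi-Lipschitz and $\alpha'$ exists merely almost everywhere — one must phrase the "no break" argument in terms of the boundary sets rather than pointwise tangent directions.
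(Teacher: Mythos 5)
Your first step (showing $\alpha$ stays in $\partial J^+(p,U)$) is correct and is essentially the paper's argument: the paper contradicts $\alpha(t_0)\in I^+(p,U)$ via Prop.~\ref{nonnull} applied to $\gamma\cup\alpha|_{[t_0,1]}$, while you invoke Prop.~\ref{push-up1}; both work. The problem is the second half. Your regularisation step has the inequality the wrong way round: since $\check g_\eps\prec g$, the light cones of $\check g_\eps$ are \emph{strictly smaller} than those of $g$, so a $g$-null curve (which $\alpha$ now is) is $\check g_\eps$-\emph{spacelike}, not $\check g_\eps$-causal. Hence "causal but not timelike for $\check g_\eps$" fails and you cannot apply the smooth \cite[Cor.~2.4.10]{Chrusciel_causality} to $(U,\check g_\eps)$. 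Flipping to the outer approximation does not help either: with respect to $\hat g_\eps\succ g$ the curve $\alpha$ is strictly timelike, so it does not lie on $\partial J^+_{\hat g_\eps}(p,U)$ and the smooth corollary again gives nothing. Your fallback --- covering by totally normal sets, replacing $\alpha$ piecewise by radial null segments and then "ruling out breaks" --- is precisely the hard content of the statement, and you leave it unresolved; moreover, even the intermediate claim that $\alpha$ must \emph{coincide} with the radial segment between consecutive points is not established by the boundary description alone.

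The paper avoids all of this with a short computation that your proposal misses. Since $\alpha([0,1])\subseteq\partial J^+(p,U)$, the curve $\beta=\exp_p^{-1}\circ\alpha$ satisfies $\tilde Q(\beta(t))=0$ for all $t$. Differentiating and using the Gauss-lemma identity $\grad\tilde Q=2\tilde P$ gives, for a.e.\ $t$,
\begin{equation*}
0=\tfrac{d}{dt}\tilde Q(\beta(t))=2\,g_p\bigl(\tilde P(\beta(t)),\beta'(t)\bigr)=2\,g_p\bigl(\beta(t),\beta'(t)\bigr).
\end{equation*}
A vector $g_p$-orthogonal to the null vector $\beta(t)$ and not spacelike must be collinear with it, so $\beta'(t)=f(t)\beta(t)$ a.e.; hence $\beta(t)=h(t)v$ for a single null $v$ with $h$ Lipschitz and strictly increasing, and the explicit reparametrisation $r(s)=\int_0^s f$ turns $\beta$ into a straight null ray, i.e.\ $\alpha$ into a radial null geodesic. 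This is where the Gauss Lemma (Th.~\ref{Gauss}), established earlier precisely for this purpose, does the work; without it (or an equivalent substitute) your sketch does not close.
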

%%%%%%%%%%%%%%%%%%%%%%%%%%%%%%%%%%%%%%%%%%%%%%%%%%%%%%%%%%%%%%%%%%%%%%%%%%%%%%%%%%%%%%%%%%%%%%%%%%%%%%%%%
%%%%%%%%%%%%%%%%%%%%%%%%%%%%%%%%%%%%%%%%%%%%%%%%%%%%%%%%%%%%%%%%%%%%%%%%%%%%%%%%%%%%%%%%%%%%%%%%%%%%%%%%%
\begin{proof} 
Suppose to the contrary that there exists $t_0\in (0,1)$ such that
$\alpha(t_0)\in I^+(p,U)$. Then there exists a future directed
timelike curve $\gamma$ from $p$ to $\alpha(t_0)$. 
Applying Prop. \ref{nonnull} to the concatenation
$\gamma\cup \alpha|_{[t_0,1]}$ it follows that there exists a future
directed timelike curve from $p$ to $\alpha(1)$. But then 
$\alpha(1)\in I^+(p,U)$, a contradiction. Thus $\alpha(t)\in \partial 
J^+(p,U),\ \forall t\in[0,1]$,
implying that $\beta(t)=\exp_p^{-1}\circ\alpha(t)\in \partial J^+(0),\ \forall t\in[0,1]$, so
$\tilde{Q}(\beta(t))=0,\ \forall t\in [0,1]$ and for almost all $t$ we have
\begin{equation*}
0=\frac{d}{dt}\tilde Q(\beta(t))=g_p(\grad \tilde Q(\beta(t)),\beta'(t))
=g_p(2\tilde P(\beta(t)),\beta'(t)).
\end{equation*}
Hence $\beta(t)$ is collinear with $\beta'(t)$
almost everywhere, and it is easily seen that this implies 
the existence of some $v\neq 0, v\in \partial J^+(0)$, and of some
$h:\R\rightarrow \R$ such that $\beta(t)=h(t)v$. The function
$h$ is locally Lipschitz since $\beta$ is, and injective since $\alpha$
is (on every cylindrical neighbourhood there is a natural time function).
Thus $h$ is strictly monotonous, and in fact strictly
increasing since otherwise $\beta$ would enter $J^-(0)$. Thus
$\beta'(t)=f(t)\beta(t)$ where $f(t):= \frac{h'(t)}{h(t)}\in
L^\infty_{\text{loc}}$. From here we may argue exactly as in 
\cite[Cor.\ 2.4.10]{Chrusciel_causality}: the function $r(s):=\int_0^s f(\tau)\,
d\tau$ is locally Lipschitz and strictly increasing, hence a
bijection from $[0,1]$ to some interval $[0,r_0]$.  Thus so is its
inverse $r\to s(r)$, and we obtain $\beta(s(r))' =
\beta'(s(r))/f(s(r)) = \beta(s(r))$ a.e., where the right hand side
is even continuous. It follows that in this parametrisation, $\beta$
is $C^1$ and in fact is a straight line in the null cone, hence
$\alpha$ can be parametrised as a null-geodesic segment, as claimed.
\end{proof}

%%%%%%%%%%%%%%%%%%%%%%%%%%%%%%%%%%%%%%%%%%%%%%%%%%%%%%%%%%%%%%%%%%%%%%%%%%%%%%%%%%%%%%%%%%%%%%%%%%%%%%%%%
%%%%%%%%%%%%%%%%%%%%%%%%%%%%%%%%%%%%%%%%%%%%%%%%%%%%%%%%%%%%%%%%%%%%%%%%%%%%%%%%%%%%%%%%%%%%%%%%%%%%%%%%%
\begin{Corollary} 
  The relation $\ll$ is open: if $p\ll q$ then there exist
  neighbourhoods $V$ of $p$ and $W$ of $q$ such that $p'\ll q'$ for all
  $p'\in V$ and $q'\in W$. In particular, for any $p\in M$, $I^+(p)$
  is open in $M$.
\end{Corollary}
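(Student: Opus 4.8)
The plan is to derive the openness of $\ll$ from the local result in Theorem \ref{lcb} together with the push-up lemma (Prop.\ \ref{push-up1}).

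First I would observe that it suffices to produce, given $p\ll q$, a single intermediate point where we have some ``room'': since $p\ll q$, there is a future-directed timelike curve from $p$ to $q$, and we may pick a point $m$ strictly in the interior, so that $p\ll m$ and $m\ll q$. Now take a normal neighbourhood $U_m$ of $m$ as in Theorem \ref{lcb}. Then $p\in I^-(m,U_m)$ and $q\in I^+(m,U_m)$ (after possibly shrinking along the curve so that the relevant curve-segments lie in $U_m$), and by Theorem \ref{lcb} the sets $I^\pm(m,U_m)$ are \emph{open} in $U_m$, hence open in $M$. So set $V:=I^-(m,U_m)$ and $W:=I^+(m,U_m)$; these are open neighbourhoods of $p$ and $q$ respectively.

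Next, for any $p'\in V$ and $q'\in W$ we have $p'\ll m$ and $m\ll q'$ by construction. Applying Prop.\ \ref{push-up1} (with the middle point $m$, in the form $p'\ll m$ and $m\le q'$, or indeed $p'\ll m$ and $m\ll q'$ directly) yields $p'\ll q'$. This establishes the first assertion. The ``in particular'' statement follows immediately: if $q\in I^+(p)$ then $p\ll q$, so the neighbourhood $W$ of $q$ just produced satisfies $W\subseteq I^+(p)$, whence $I^+(p)$ is open.

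I do not expect a serious obstacle here, as all the substantive work has been done in Theorem \ref{lcb} and Prop.\ \ref{push-up1}; the only point requiring a little care is the reduction to an intermediate point $m$ with a normal neighbourhood and the verification that $p$ and $q$ indeed lie in the relatively chronological past/future of $m$ within that neighbourhood --- this uses that one can break the timelike curve from $p$ to $q$ so that the piece near $m$ stays inside $U_m$, together with the local characterisation of $I^\pm(m,U_m)$ via $\exp_m$. Alternatively, one could phrase the argument entirely via the openness of $\check I^\pm$ and Prop.\ \ref{ichecki}, but routing through Theorem \ref{lcb} keeps it self-contained within this section.
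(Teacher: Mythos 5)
There is a genuine gap in your construction of $V$ and $W$. By Definition \ref{causalitydef}, $I^-(m,U_m)$ and $I^+(m,U_m)$ are subsets of $U_m$, and $U_m$ is a (small) normal neighbourhood of the \emph{intermediate} point $m$: Theorem \ref{lcb} only provides a neighbourhood \emph{basis} at $m$, so there is no reason for $U_m$ to contain the endpoints $p$ and $q$, and generically it will not. Hence the claim ``$p\in I^-(m,U_m)$ and $q\in I^+(m,U_m)$'' is false in general, and your $V$ and $W$ are not neighbourhoods of $p$ and $q$. The parenthetical ``after possibly shrinking along the curve'' does not repair this: shrinking the curve segment moves the points you are comparing with $m$, but the statement requires open sets around the \emph{original} $p$ and $q$. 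A single normal neighbourhood around a midpoint cannot do the job.

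The fix is to localise at the endpoints rather than at a midpoint, which is what the paper does: take totally normal neighbourhoods $N_p$ of $p$ and $N_q$ of $q$ as in Theorem \ref{lcb}, and choose points $p'$, $q'$ \emph{on the timelike curve} $\alpha$ with $p'\in N_p$ slightly to the future of $p$ (so the segment of $\alpha$ from $p$ to $p'$ lies in $N_p$) and $q'\in N_q$ slightly to the past of $q$. Then $V:=I^-(p',N_p)$ and $W:=I^+(q',N_q)$ are open by Theorem \ref{lcb} and contain $p$ and $q$ respectively, and for $p''\in V$, $q''\in W$ one concatenates timelike curves $p''\ll p'\ll q'\ll q''$ (no push-up is even needed here, since all three relations are already timelike). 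Your overall strategy --- reduce to the local openness statement of Theorem \ref{lcb} and glue with transitivity --- is the right one; only the placement of the normal neighbourhoods must be corrected. The ``in particular'' deduction of openness of $I^+(p)$ from the openness of $\ll$ is fine as you state it.
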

%%%%%%%%%%%%%%%%%%%%%%%%%%%%%%%%%%%%%%%%%%%%%%%%%%%%%%%%%%%%%%%%%%%%%%%%%%%%%%%%%%%%%%%%%%%%%%%%%%%%%%%%%
%%%%%%%%%%%%%%%%%%%%%%%%%%%%%%%%%%%%%%%%%%%%%%%%%%%%%%%%%%%%%%%%%%%%%%%%%%%%%%%%%%%%%%%%%%%%%%%%%%%%%%%%%
\begin{proof} 
  Let $\alpha$ be a future-directed timelike curve from $p$ to $q$ and
  pick totally normal neighbourhoods $N_p$, $N_q$ of $p$, $q$ as in
  Th.\ \ref{lcb}. Now let $p'\in N_p$ and $q'\in N_q$ be points on
  $\alpha$. Then $V:=I^-(p',N_p)$ and $W:=I^+(q',N_q)$ have the
  required property.
\end{proof}
\medskip\noindent
From this we immediately conclude:
%%%%%%%%%%%%%%%%%%%%%%%%%%%%%%%%%%%%%%%%%%%%%%%%%%%%%%%%%%%%%%%%%%%%%%%%%%%%%%%%%%%%%%%%%%%%%%%%%%%%%%%%%
%%%%%%%%%%%%%%%%%%%%%%%%%%%%%%%%%%%%%%%%%%%%%%%%%%%%%%%%%%%%%%%%%%%%%%%%%%%%%%%%%%%%%%%%%%%%%%%%%%%%%%%%%
\begin{Corollary} Let $A\subseteq U \subseteq M$, where $U$ is open. Then
$$
I^+(A,U) = I^+(I^+(A,U)) = I^+(J^+(A,U)) = J^+(I^+(A,U)) \subseteq J^+(J^+(A,U)) = J^+(A,U)
$$
\end{Corollary}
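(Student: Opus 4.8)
For $A\subseteq U\subseteq M$ with $U$ open,
$$
I^+(A,U) = I^+(I^+(A,U)) = I^+(J^+(A,U)) = J^+(I^+(A,U)) \subseteq J^+(J^+(A,U)) = J^+(A,U).
$$

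The plan is to reduce everything to the two transitivity/push-up facts that are already available: the push-up lemma (Prop.~\ref{push-up1}), which says $p\le q\ll r$ or $p\ll q\le r$ implies $p\ll r$, and the openness of $\ll$ established in the previous corollary (which is what forces $I^+(A,U)$ to be open and hence accessible by push-up arguments). I would first observe that $J^+(J^+(A,U))=J^+(A,U)$ is immediate: concatenation of future-directed causal curves is future-directed causal, and the reverse inclusion $J^+(A,U)\subseteq J^+(J^+(A,U))$ holds since $A\subseteq J^+(A,U)$ (recall $J^+$ includes $A$ by definition) and $J^+(B,U)\supseteq B$ always. Likewise the trivial inclusions $I^+(A,U)\subseteq J^+(A,U)$ and the chain $I^+(A,U)\subseteq I^+(I^+(A,U))$, etc., all follow from $I^+(B,U)\subseteq I^+(A,U)$ whenever $B\subseteq J^+(A,U)$ — but this last monotonicity step is precisely where push-up enters.

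Next I would prove the core equalities by establishing the two nontrivial inclusions. \textbf{First}, $I^+(J^+(A,U))\subseteq I^+(A,U)$: if $q\in I^+(r,U)$ for some $r\in J^+(A,U)$, then either $r\in A\subseteq$ (so $q\in I^+(A,U)$ directly), or there is $p\in A$ with $p\le r$; since also $r\ll q$, Prop.~\ref{push-up1} gives $p\ll q$, so $q\in I^+(p,U)\subseteq I^+(A,U)$. This shows $I^+(J^+(A,U))\subseteq I^+(A,U)$, and since trivially $I^+(A,U)\subseteq I^+(J^+(A,U))$ (as $A\subseteq J^+(A,U)$), these are equal. \textbf{Second}, $J^+(I^+(A,U))\subseteq I^+(A,U)$: if $q\in J^+(r,U)$ with $r\in I^+(p,U)$, $p\in A$, then $p\ll r\le q$, so again push-up yields $p\ll q$ and $q\in I^+(A,U)$. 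Combined with $I^+(A,U)\subseteq I^+(I^+(A,U))\subseteq J^+(I^+(A,U))$ (the first inclusion because $A\subseteq$ the open set $I^+(A,U)$ and every point of an open set lies in its own chronological future is \emph{not} automatic — rather, for any $p\in I^+(A,U)$ one has $p\in I^+(A,U)$ trivially, so $I^+(A,U)\subseteq I^+(A,U)$; the genuine content $I^+(A,U)\subseteq I^+(I^+(A,U))$ needs that each such $p$ is chronologically preceded by some point of $I^+(A,U)$, which follows because $p\in I^+(p',U)$ for some $p'\in A$ and, $\ll$ being open, the timelike curve realising $p'\ll p$ passes through points $p''\in I^+(A,U)$ with $p''\ll p$). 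This pins down all four middle terms as equal to $I^+(A,U)$.

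\textbf{Finally} the flanking relations: $I^+(A,U)\subseteq J^+(A,U)$ is trivial (timelike curves are causal), giving the lone $\subseteq$; and $J^+(I^+(A,U))\subseteq J^+(J^+(A,U))$ together with $J^+(J^+(A,U))=J^+(A,U)$ closes the diagram on the right. The main obstacle — though a mild one — is the chronological-future-of-open-set step $I^+(A,U)\subseteq I^+(I^+(A,U))$: unlike in the smooth case one cannot invoke "every point has points of $I^+$ in its past" as a reflexivity statement, so one must genuinely use openness of $\ll$ (previous corollary) to slide back along a timelike curve to an interior point still lying in $I^+(A,U)$. Everything else is bookkeeping with Prop.~\ref{push-up1} and the set-theoretic identities $B\subseteq J^+(B,U)$ and monotonicity of $I^+(\cdot,U)$, $J^+(\cdot,U)$ in the first slot.
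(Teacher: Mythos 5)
Your argument is correct and is precisely the one the paper intends: the corollary is stated with no proof beyond ``from this we immediately conclude'', the ingredients being openness of $\ll$, concatenation of curves, and the push-up results, exactly as you use them. Two small points. First, since all the futures in the displayed chain must be read relative to $U$ (otherwise the equalities fail), the push-up step needs a timelike curve \emph{inside} $U$; the correct tool is therefore Prop.~\ref{nonnull}, applied to the concatenated causal-then-timelike (or timelike-then-causal) curve with the open set $U$ serving as the ambient neighbourhood, rather than Prop.~\ref{push-up1}, which as stated only yields a timelike curve somewhere in $M$. With $U=M$ the two coincide, but for general open $U$ your citation of Prop.~\ref{push-up1} does not literally place $q$ in $I^+(A,U)$. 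Second, the inclusion $I^+(A,U)\subseteq I^+(I^+(A,U))$ requires neither openness of $\ll$ nor any push-up: if $\gamma$ is a future-directed timelike curve in $U$ from $p\in A$ to $q$, then any intermediate point $\gamma(t)$ already lies in $I^+(A,U)$ and the restriction $\gamma|_{[t,1]}$ puts $q$ in its chronological future relative to $U$. Your ``slide back along the curve'' argument is this same splitting, so it is correct, just more elaborate than necessary.
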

\medskip\noindent
A consequence of Prop.\ \ref{nonnull} is that the causal future
of any $A\subseteq M$ consists (at most) of $A$, $I^+(A)$ and of
null-geodesics emanating from $A$:
%%%%%%%%%%%%%%%%%%%%%%%%%%%%%%%%%%%%%%%%%%%%%%%%%%%%%%%%%%%%%%%%%%%%%%%%%%%%%%%%%%%%%%%%%%%%%%%%%%%%%%%%%
%%%%%%%%%%%%%%%%%%%%%%%%%%%%%%%%%%%%%%%%%%%%%%%%%%%%%%%%%%%%%%%%%%%%%%%%%%%%%%%%%%%%%%%%%%%%%%%%%%%%%%%%%
\begin{Corollary}\label{on} 
  Let $A\subseteq M$ and let $\alpha$ be a causal curve from some
  $p\in A$ to some $q\in J^+(A)\setminus I^+(A)$. Then $\alpha$ is a
  null-geodesic that does not meet $I^+(A)$.
\end{Corollary}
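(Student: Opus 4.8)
The plan is to reduce Corollary~\ref{on} to Corollary~\ref{boundary} by a standard localisation argument. First I would show that $\alpha$ does not meet $I^+(A)$: if $\alpha(t_0)\in I^+(A)$ for some $t_0\in[0,1)$, then there is a timelike future-directed curve $\gamma$ from some $p'\in A$ to $\alpha(t_0)$; applying Prop.~\ref{nonnull} to the concatenation $\gamma\cup\alpha|_{[t_0,1]}$ (which has a timelike initial segment and is causal) produces a timelike curve from $p'$ to $q$, so $q\in I^+(A)$, contradicting $q\notin I^+(A)$. Hence $\alpha([0,1])\cap I^+(A)=\emptyset$; in particular $\alpha(t)\notin I^+(A)$ for all $t$, so $\alpha(t)\in J^+(A)\setminus I^+(A)$ throughout.

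Next I would upgrade ``$\alpha$ avoids $I^+(A)$'' to ``$\alpha$ is a null geodesic'' by working locally. Cover $\alpha([0,1])$ by finitely many totally normal neighbourhoods $U_1,\dots,U_N$ as in Th.~\ref{totally} and Th.~\ref{lcb}, and pick $0=t_0<\dots<t_N=1$ with $\alpha([t_{k},t_{k+1}])\subseteq U_{k+1}$. Fix one such segment, say on $U:=U_{k+1}$, and let $p_k:=\alpha(t_k)$. I claim $\alpha(t_k)\in\partial J^+(p_{k},U)$ (trivially, since it is the basepoint, $J^+(p_k,U)$ being closed with $p_k$ on its boundary) — more to the point, I want to show $\alpha|_{[t_k,t_{k+1}]}$ stays in $\partial J^+(p_k,U)$. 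If instead $\alpha(s)\in I^+(p_k,U)$ for some $s\in(t_k,t_{k+1}]$, then since $p_k\in J^+(A)$ there is a causal curve from some point of $A$ to $p_k$; concatenating with a timelike curve from $p_k$ to $\alpha(s)$ inside $U$ and using Prop.~\ref{push-up1} gives $\alpha(s)\in I^+(A)$, contradicting the previous paragraph. So $\alpha([t_k,t_{k+1}])\subseteq J^+(p_k,U)\setminus I^+(p_k,U)$, and since $\alpha(t_{k+1})\in\partial J^+(p_k,U)$, Corollary~\ref{boundary} applies to $\alpha|_{[t_k,t_{k+1}]}$: it lies entirely in $\partial J^+(p_k,U)$ and is reparametrisable as a null-geodesic segment. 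Doing this on each piece shows $\alpha$ is piecewise a null geodesic.

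It remains to see that the broken null geodesic is in fact an \emph{unbroken} null geodesic. At an interior breakpoint $\alpha(t_k)$ ($0<k<N$), $\alpha$ enters $\partial J^+(p_{k-1},U_{k})$ at $\alpha(t_k)$ coming in as a null geodesic, and leaves along $\partial J^+(p_k,U_{k+1})$ again as a null geodesic; the key point is that at such a corner one could locally deform $\alpha$ into a timelike curve with the same endpoints (a genuine ``bent'' causal curve is not on the boundary of the causal future of its initial point). Concretely, take a totally normal $U'$ as in Th.~\ref{lcb} containing a neighbourhood of $\alpha(t_k)$ on $\alpha$, with endpoints $a=\alpha(t_k-\delta)$, $b=\alpha(t_k+\delta)$ for small $\delta$. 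If the two null directions at $\alpha(t_k)$ were not collinear, then in normal coordinates on $U'$ the broken segment from $a$ to $b$ would not lie on $\partial J^+(a,U')$ — by Th.~\ref{lcb} its $\exp^{-1}_a$-image would leave $\partial I^+(0)$ — so $b\in I^+(a,U')\subseteq I^+(A)$, contradicting paragraph one. Hence all the null-geodesic pieces fit together smoothly and $\alpha$ is (a reparametrisation of) a single null geodesic.

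The main obstacle I anticipate is precisely this last step: ruling out corners. In the smooth theory this is the standard fact that a causal curve with a ``break'' can be shortcut to a timelike curve near the break, but in the $C^{1,1}$ setting one must extract it from the available tools — Th.~\ref{lcb}, Prop.~\ref{nonnull}, and Corollary~\ref{boundary} — rather than from a local variational argument, and some care is needed with the fact that at a corner only one-sided tangents exist. A clean way to package it may be to observe that the argument of Corollary~\ref{boundary} actually shows $\exp_{p_k}^{-1}\circ\alpha$ is a \emph{single} ray in the null cone of $\partial J^+(0)$ over the whole portion of $\alpha$ lying in a fixed totally normal neighbourhood of $p_k$ — including points $\alpha(s)$ with $s<t_k$, provided those still lie in $\partial J^+(p_k,U_k')$ — which forces collinearity of the incoming and outgoing directions directly.
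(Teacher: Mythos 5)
Your argument is correct and follows essentially the same route as the paper's proof: rule out any intersection with $I^+(A)$ via push-up (Prop.~\ref{push-up1}/Prop.~\ref{nonnull}), cover $\alpha$ by totally normal neighbourhoods as in Th.~\ref{lcb}, and apply Corollary~\ref{boundary} to each piece. Your explicit handling of the breakpoints --- applying the Corollary~\ref{boundary} argument on an interval straddling each potential corner to force collinearity of the incoming and outgoing null directions --- supplies a detail that the paper's one-line conclusion leaves implicit, and is the standard and correct way to close that gap.
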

%%%%%%%%%%%%%%%%%%%%%%%%%%%%%%%%%%%%%%%%%%%%%%%%%%%%%%%%%%%%%%%%%%%%%%%%%%%%%%%%%%%%%%%%%%%%%%%%%%%%%%%%%
%%%%%%%%%%%%%%%%%%%%%%%%%%%%%%%%%%%%%%%%%%%%%%%%%%%%%%%%%%%%%%%%%%%%%%%%%%%%%%%%%%%%%%%%%%%%%%%%%%%%%%%%%

\begin{proof} 
By Prop.\ \ref{nonnull}, $\alpha$ has to be a null curve. Moreover,
if $\alpha(t)\in I^+(A)$ for some $t$ then for some $a\in A$ we
would have $a\ll \alpha(t) \le q$, so $q\in I^+(A)$ by Prop.\ \ref{push-up1}, a
contradiction. Covering $\alpha$ by totally normal neighbourhoods as in Cor.\ \ref{lipisc1}
and applying Cor.\ \ref{boundary} gives the claim.
\end{proof}

\medskip\noindent
Following \cite[Lemma 14.2]{ON83} we next give a more refined
description of causality for totally normal sets. For this, recall
from the proof of \cite[Th.\ 4.1]{KSS} that the map $E: v\mapsto
(\pi(v),\exp_{\pi(v)}(v))$ is a homeomorphism from some open neighbourhood $S$
of the zero section in $TM$ onto an open neighbourhood $W$ of the
diagonal in $M\times M$.  If $U$ is totally normal as in Th.\ \ref{lcb} and such that $U\times
U \subseteq W$ then the map $U\times U \to TM$, $(p,q)\mapsto
\overrightarrow{pq}:=\exp_p^{-1}(q) = E^{-1}(p,q)$ is continuous.
%%%%%%%%%%%%%%%%%%%%%%%%%%%%%%%%%%%%%%%%%%%%%%%%%%%%%%%%%%%%%%%%%%%%%%%%%%%%%%%%%%%%%%%%%%%%%%%%%%%%%%%%%
%%%%%%%%%%%%%%%%%%%%%%%%%%%%%%%%%%%%%%%%%%%%%%%%%%%%%%%%%%%%%%%%%%%%%%%%%%%%%%%%%%%%%%%%%%%%%%%%%%%%%%%%%
\begin{Proposition} Let $U\subseteq M$ be totally normal as in Th.\ \ref{lcb}. 
\begin{itemize}
\item[(i)] Let $p$, $q\in U$. Then $q\in I^+(p,U)$ (resp.\ $\in J^+(p,U)$) if and only
if $\overrightarrow{pq}$ is future-directed timelike (resp.\ causal).
\item[(ii)] $J^+(p,U)$ is the closure of $I^+(p,U)$ relative to $U$.
\item[(iii)] The relation $\le$ is closed in $U\times U$.
\item[(iv)] If $K$ is a compact subset of $U$ and $\alpha: [0,b)\to K$ is causal, then $\alpha$
can be continuously extended to $[0,b]$. 
\end{itemize}
\end{Proposition}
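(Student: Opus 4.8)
The plan is to obtain parts (i)--(iii) almost directly from Theorem \ref{lcb} together with the continuity of the map $(p,q)\mapsto\overrightarrow{pq}=\exp_p^{-1}(q)$ recorded just above (which holds once $U$ is taken small enough within the basis of Theorem \ref{lcb} that $U\times U\subseteq W$, as we may), and to derive (iv) from these by a limit curve argument whose decisive ingredient is the time function $x^0$ on the cylindrical chart containing $U$. For (i) I would merely translate Theorem \ref{lcb}: since $q\in U=\exp_p(\tilde U)$ already forces $\overrightarrow{pq}\in\tilde U$, membership $q\in I^+(p,U)$ is equivalent to $\overrightarrow{pq}\in I^+(0)$, i.e.\ to $\overrightarrow{pq}$ being future-directed timelike (the time-orientation on $T_pM$ being the one singled out in the proof of Theorem \ref{lcb}), and likewise for $J^+(p,U)$ with $J^+(0)$.

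For (ii) I would use that $\exp_p\colon\tilde U\to U$ is a homeomorphism to reduce the assertion to the elementary fact that, in the Lorentzian vector space $(T_pM,g_p)$, the future causal cone together with $0$ is the closure of the future timelike cone; because $\tilde U$ is an open star-shaped neighbourhood of $0$, this equality persists relative to $\tilde U$ (the only point being that every future causal $v\in\tilde U$, including $v=0$, is a limit of future timelike vectors which, $\tilde U$ being open, may be chosen inside $\tilde U$). Pushing this forward by $\exp_p$ and invoking Theorem \ref{lcb} gives $\mathrm{cl}_U\, I^+(p,U)=J^+(p,U)$. For (iii) I would present $\{(p,q)\in U\times U:p\le q\}$ as the preimage, under the continuous map $(p,q)\mapsto\overrightarrow{pq}$, of the set $\mathcal C\subseteq TU$ of all $v$ satisfying $g(v,v)\le 0$ and $g(X_0,v)\le 0$, where $X_0$ is the continuous timelike vector field fixing the time-orientation; by (i) this $\mathcal C$ is exactly the future-directed causal vectors together with the zero vectors, since a nonzero causal vector is never $g$-orthogonal to the timelike $X_0$, and $\mathcal C$ is closed because $g$ and $X_0$ are continuous. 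Hence $\le$ is closed in $U\times U$.

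The real work is in (iv), and I expect the main obstacle to be not the limit curve extraction but securing that $\le$ is \emph{antisymmetric} on $U$ --- this is exactly why one insists that $U$ lie inside a single cylindrical chart, in which $x^0$ is a time function (cf.\ the proof of Corollary \ref{boundary}), so that $p\le q\le p$ forces $x^0(p)\le x^0(q)\le x^0(p)$ and therefore $p=q$; without a time function on all of $U$ one could a priori have $q\le q'\le q$ with $q\ne q'$ and the argument below would collapse.

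Granting antisymmetry, I would argue for a future-directed causal $\alpha\colon[0,b)\to K$ with $K\Subset U$ (the past-directed case being symmetric) as follows. If $t_n\nearrow b$ and $s_n\nearrow b$ are sequences along which $\alpha$ converges to $q$, respectively $q'$, then after passing to subsequences one may interlace them so that $t_1<s_1<t_2<s_2<\cdots$, whence $\alpha(t_n)\le\alpha(s_n)\le\alpha(t_{n+1})$ for every $n$ (each segment being a future-directed causal subcurve of $\alpha$ in $U$); letting $n\to\infty$ and using that $\le$ is closed (part (iii)) yields $q\le q'\le q$, so $q=q'$ by antisymmetry. Thus $\alpha$ has a unique cluster value $q$ as $t\to b^-$, and since $\alpha$ stays in the compact set $K$ this forces $\alpha(t)\to q$; setting $\alpha(b):=q$ gives the required continuous extension.
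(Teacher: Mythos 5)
Your proof is correct and follows essentially the same route as the paper's: (i)--(iii) come from Theorem \ref{lcb} together with the continuity of $(p,q)\mapsto\overrightarrow{pq}$, and (iv) from interlacing accumulation sequences, closedness of $\le$ (part (iii)), and antisymmetry of $\le$ on $U$. The one divergence is the antisymmetry step: you derive it from the time function $x^0$ of the cylindrical chart, where the inference from $x^0(p)=x^0(q)$ to $p=q$ needs the (true, but worth stating) fact that $x^0$ is \emph{strictly} increasing along nontrivial future-directed causal curves; the paper instead gets antisymmetry directly from (i), since $p\le q\le p$ with $p\ne q$ would make $\overrightarrow{pq}$ simultaneously future- and past-directed, which is impossible.
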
 
%%%%%%%%%%%%%%%%%%%%%%%%%%%%%%%%%%%%%%%%%%%%%%%%%%%%%%%%%%%%%%%%%%%%%%%%%%%%%%%%%%%%%%%%%%%%%%%%%%%%%%%%%
%%%%%%%%%%%%%%%%%%%%%%%%%%%%%%%%%%%%%%%%%%%%%%%%%%%%%%%%%%%%%%%%%%%%%%%%%%%%%%%%%%%%%%%%%%%%%%%%%%%%%%%%%
\begin{proof} 
  (i) and (ii) are immediate from Th.\ \ref{lcb}.

\medskip\noindent
  (iii) Let $p_n \le q_n$, $p_n\to p$, $q_n\to q$. By (i),
  $\overrightarrow{p_nq_n}$ is future-directed causal for all $n$. By
  continuity (\cite[Th.\ 4.1]{KSS}), therefore,
  $\langle\overrightarrow{pq},\overrightarrow{pq}\rangle \le 0$, so
  $\overrightarrow{pq}$ is future-directed causal as well.

\medskip\noindent
  (iv) Let $0< t_1 < t_2 < \dots \to b$. Since $K$ is compact,
  $\alpha(t_i)$ has an accumulation point $p$ and it remains to show
  that $p$ is the only accumulation point. Suppose that $q\not=p$ is
  also an accumulation point. Choose a subsequence $t_{i_k}$ such that
  $\alpha(t_{i_{2k}}) \to p$ and $\alpha(t_{i_{2k+1}}) \to q$. Then
  since $\alpha(t_{i_{2k}})\le \alpha(t_{i_{2k+1}}) \le
  \alpha(t_{i_{2k+2}})$, (iii) implies that $p\le q\le p$. By (i),
  then, $\overrightarrow{pq}$ would be both future- and past-directed,
  which is impossible.
\end{proof}
\medskip\noindent
From this, with the same proof as in \cite[Lemma 14.6]{ON83} we
obtain:
%%%%%%%%%%%%%%%%%%%%%%%%%%%%%%%%%%%%%%%%%%%%%%%%%%%%%%%%%%%%%%%%%%%%%%%%%%%%%%%%%%%%%%%%%%%%%%%%%%%%%%%%%
%%%%%%%%%%%%%%%%%%%%%%%%%%%%%%%%%%%%%%%%%%%%%%%%%%%%%%%%%%%%%%%%%%%%%%%%%%%%%%%%%%%%%%%%%%%%%%%%%%%%%%%%%
\begin{Corollary} Let $A\subseteq M$. Then
\begin{itemize}
\item[(i)] $J^+(A)^\circ=I^+(A)$.
\item[(ii)]  $J^+(A)\subseteq \overline{I^+(A)}$.
\item[(iii)] $J^+(A) = \overline{I^+(A)}$ if and only if $J^+(A)$ is closed.
\end{itemize}
\end{Corollary}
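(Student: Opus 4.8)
The plan is to transcribe the proof of \cite[Lemma 14.6]{ON83}, the low-regularity ingredients now all being available: the push-up Proposition \ref{push-up1}, the openness of the relation $\ll$ (the Corollary above), and the local description of $I^\pm$ and $J^\pm$ inside totally normal neighbourhoods furnished by Theorem \ref{lcb} and by parts (i), (ii) of the preceding Proposition (such neighbourhoods forming a basis by Theorems \ref{totally} and \ref{lcb}). Note first that (iii) is purely formal once (ii) is known: if $J^+(A)$ is closed then $\overline{I^+(A)}\subseteq\overline{J^+(A)}=J^+(A)\subseteq\overline{I^+(A)}$, whence equality; conversely $J^+(A)=\overline{I^+(A)}$ exhibits $J^+(A)$ as a closure, hence as closed. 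So it remains to prove (i) and (ii).

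For (i): since $I^+(A)=\bigcup_{p\in A}I^+(p)$ is open by the Corollary on openness of $\ll$, and $I^+(A)\subseteq J^+(A)$, we get $I^+(A)\subseteq J^+(A)^\circ$. Conversely, let $q\in J^+(A)^\circ$ and choose a totally normal neighbourhood $U$ of $q$ as in Theorem \ref{lcb} with $U\subseteq J^+(A)^\circ$. By the past version of Theorem \ref{lcb}, $I^-(q,U)$ is a nonempty open subset of $U$; pick $r\in I^-(q,U)$. Then $r\in U\subseteq J^+(A)$, so there is $p\in A$ with $p\le r$, while $r\ll q$. Proposition \ref{push-up1} now gives $p\ll q$, i.e.\ $q\in I^+(A)$.

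For (ii): let $q\in J^+(A)$, fix $p\in A$ with $p\le q$, and choose a totally normal neighbourhood $U$ of $q$ as in Theorem \ref{lcb}. I claim one can find $p'\in U$ with $q\in J^+(p',U)$ and $I^+(p',U)\subseteq I^+(A)$. If $p=q$ (so $q\in A$), take $p'=q$: then $q\in J^+(q,U)$ trivially, and $I^+(q,U)\subseteq I^+(q)\subseteq I^+(A)$. If $p<q$, take a future-directed causal curve $\alpha$ from $p$ to $q$; since $U$ is a neighbourhood of $q=\alpha(1)$ there is $t_0<1$ with $\alpha([t_0,1])\subseteq U$, and $p':=\alpha(t_0)$ satisfies $q\in J^+(p',U)$ via $\alpha|_{[t_0,1]}$, while $p\le p'$ via $\alpha|_{[0,t_0]}$; hence for every $r\in I^+(p',U)$ we have $p\le p'\ll r$, so $p\ll r$ by Proposition \ref{push-up1}, proving $I^+(p',U)\subseteq I^+(A)$. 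In either case, part (ii) of the preceding Proposition yields $q\in J^+(p',U)=\overline{I^+(p',U)}\cap U\subseteq\overline{I^+(p',U)}\subseteq\overline{I^+(A)}$.

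The only slightly delicate point is the degenerate case $p=q$ in (ii), where there need not be any causal curve strictly preceding $q$ inside $A$; it is handled by using the membership $q\in A$ directly together with the local fact $q\in\overline{I^+(q,U)}$. The passage from closure relative to $U$ to closure in $M$ is harmless because $I^+(p',U)\subseteq I^+(A)$. Apart from that, every step is a direct copy of the smooth argument.
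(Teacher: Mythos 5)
Your proof is correct and is essentially the argument the paper intends: the paper simply states that the Corollary follows ``with the same proof as in \cite[Lemma 14.6]{ON83}'', and your write-up is a faithful transcription of that proof, correctly substituting the low-regularity ingredients (push-up, openness of $\ll$, Theorem \ref{lcb}, and part (ii) of the preceding Proposition) where the smooth argument uses convex neighbourhoods. Your explicit treatment of the degenerate case $q\in A$ in (ii) is a sensible clarification but does not change the approach.
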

%%%%%%%%%%%%%%%%%%%%%%%%%%%%%%%%%%%%%%%%%%%%%%%%%%%%%%%%%%%%%%%%%%%%%%%%%%%%%%%%%%%%%%%%%%%%%%%%%%%%%%%%%
%%%%%%%%%%%%%%%%%%%%%%%%%%%%%%%%%%%%%%%%%%%%%%%%%%%%%%%%%%%%%%%%%%%%%%%%%%%%%%%%%%%%%%%%%%%%%%%%%%%%%%%%%
\medskip\noindent
Finally, as in the smooth case, one may introduce a notion of
causality also for general continuous curves (cf.\ \cite[p.\ 184]{HE},
\cite[Def.\ 8.2.1]{Kriele}):
%%%%%%%%%%%%%%%%%%%%%%%%%%%%%%%%%%%%%%%%%%%%%%%%%%%%%%%%%%%%%%%%%%%%%%%%%%%%%%%%%%%%%%%%%%%%%%%%%%%%%%%%%
%%%%%%%%%%%%%%%%%%%%%%%%%%%%%%%%%%%%%%%%%%%%%%%%%%%%%%%%%%%%%%%%%%%%%%%%%%%%%%%%%%%%%%%%%%%%%%%%%%%%%%%%%
\begin{definition}\label{contcaus} 
  A continuous curve $\alpha: I \to M$ is called future-directed
  causal (resp.\ timelike) if for every $t\in I$ there exists a
  totally normal neighbourhood $U$ of $\alpha(t)$ such that for any
  $s\in I$ with $\alpha(s)\in U$ and $s>t$, %(resp.$s<t$),
  $\alpha(s)\in J^+(\alpha(t))\setminus \{\alpha(t)\}$ (resp.\
  $\alpha(s)\in I^+(\alpha(t))\setminus \{\alpha(t)\}$), and analogously
  for $s<t$ with $J^-$ resp.\ $I^-$.
\end{definition}
%%%%%%%%%%%%%%%%%%%%%%%%%%%%%%%%%%%%%%%%%%%%%%%%%%%%%%%%%%%%%%%%%%%%%%%%%%%%%%%%%%%%%%%%%%%%%%%%%%%%%%%%%
%%%%%%%%%%%%%%%%%%%%%%%%%%%%%%%%%%%%%%%%%%%%%%%%%%%%%%%%%%%%%%%%%%%%%%%%%%%%%%%%%%%%%%%%%%%%%%%%%%%%%%%%%
\medskip\noindent
Then the proof of \cite[Lemma\ 8.2.1]{Kriele}) carries over to the
$C^{1,1}$-setting, showing that any continuous causal (resp.\
timelike) curve is locally Lipschitz.
\begin{remark}
While a continuous causal curve $\alpha$ need not
be a causal Lipschitz curve in the sense of our definition (cf.\ \cite[Rem.\ 1.28]{M}), 
it still follows that $\langle \alpha'(t),\alpha'(t)\rangle \le 0$ wherever $\alpha'(t)$ exists
(however, $\alpha'(t)$ might be $0$).

\medskip\noindent
To see this, consider first the case where $g$ is smooth. Set
$p:=\alpha(t)$, pick a normal neighbourhood $U$ around $p$ and set
$\beta:=\exp_p^{-1}\circ \alpha$. Then $\beta'(t)=\alpha'(t)$ and by
Def.\ \ref{contcaus} and Th.\ \ref{lcb}, $\beta(s)\in J^+(0)$
for $s>t$ small. Therefore, $\beta'(t)\in J^+(0)$, so $\langle
\alpha'(t),\alpha'(t)\rangle \le 0$. In the general case, where $g$ is
only supposed to be $C^{1,1}$, pick a regularisation $\hat g_\eps$ as
in Prop.\ \ref{CGapprox}. Then $\hat g_\eps(\alpha'(t),\alpha'(t)) \le
0$ for all $\eps$ by the above and letting $\eps\to 0$ gives the
claim.
\end{remark}

\section{Further aspects of causality theory}\label{further}
In the previous section we have shown that the fundamental
constructions of causality theory remain valid for
$C^{1,1}$-metrics. It was demonstrated by P.\ Chru\'sciel in
\cite{Chrusciel_causality} that to obtain a consistent causality
theory for $C^2$-metrics one needs two main ingredients: on the one
hand, a push-up Lemma, as given by Prop.\ \ref{push-up1},
\ref{nonnull}. The second pillar in the development of the theory is
the fact that accumulation curves of causal curves are causal again.
Here, if $\alpha_n:I\to M$ is a sequence of paths (parametrised
curves) then a path $\alpha:I\to M$ is called an accumulation curve of
the sequence $(\alpha_n)$ if there exists a subsequence
$(\alpha_{n_k})$ that converges to $\alpha$ uniformly on compact
subsets of $I$.  It was shown in \cite[Th.\ 1.6]{CG} that limit
stability of causal curves holds in fact even for continuous metrics:
\begin{Theorem}\label{accu} 
  Let $g$ be a $C^0$-Lorentzian metric on $M$, and let $\alpha_n: I\to
  M$ be a sequence of causal curves that accumulate at some $p\in M$
  ($\alpha_n(0)\to p$). Then there exists a causal curve $\alpha$ that
  is an accumulation curve of $\alpha_n$.
\end{Theorem}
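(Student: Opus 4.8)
The statement is Theorem \ref{accu}, the limit-stability (or accumulation) of causal curves for a merely $C^0$ metric. Since this is quoted from \cite[Th.\ 1.6]{CG}, in the spirit of the present paper the natural strategy is a regularisation argument: rather than working with $g$ directly (where causal curves need not have any good compactness behaviour beyond what is already known), one sandwiches $g$ by the smooth metrics $\hat g_\eps$ from Prop.\ \ref{CGapprox}. The key observation is that a $g$-causal curve is $\hat g_\eps$-\emph{timelike} for every $\eps>0$, because $g\prec \hat g_\eps$ means $\hat g_\eps$ has strictly wider cones; hence each $\alpha_n$ is in particular $\hat g_\eps$-causal for all $\eps$. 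The plan is then: (1) fix an auxiliary complete Riemannian metric $h$ on $M$ and reparametrise each $\alpha_n$ by $h$-arclength, so that the $\alpha_n$ are uniformly Lipschitz with $h$-speed $1$ (using that causal curves in a cylindrical chart have $x^0$-component comparable to the Euclidean speed, cf.\ condition (2) of Def.\ \ref{def:cylindrical chart}, so arclength parametrisation is well-defined and the curves are locally Lipschitz with a uniform constant on compact sets); (2) apply the Arzel\`a--Ascoli theorem to extract a subsequence $\alpha_{n_k}$ converging uniformly on compact subsets of $I$ to some locally Lipschitz limit curve $\alpha$, with $\alpha(0)=p$; (3) show $\alpha$ is $g$-causal.

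\textbf{The causal character of the limit.} The heart of the matter is step (3), and here the sandwiching pays off. For each fixed $\eps>0$, the curves $\alpha_{n_k}$ are $\hat g_\eps$-causal, and $\hat g_\eps$ is smooth, so by the classical (smooth) limit-curve theorem the uniform limit $\alpha$ is $\hat g_\eps$-causal. More concretely, one can argue locally: around a point $\alpha(t_0)$ take a totally normal neighbourhood $U$ for $\hat g_\eps$; for large $k$ the piece of $\alpha_{n_k}$ near $t_0$ lies in $U$, and by Th.\ \ref{lcb} applied to $\hat g_\eps$ (or by the smooth analogue) we have $\overrightarrow{\alpha_{n_k}(s)\,\alpha_{n_k}(s')} \in J^+_{\hat g_\eps}(0)$ for $s<s'$ close to $t_0$; passing to the limit using continuity of $(p,q)\mapsto \overrightarrow{pq}$ gives $\overrightarrow{\alpha(s)\,\alpha(s')}\in J^+_{\hat g_\eps}(0)$, i.e.\ $\alpha$ is $\hat g_\eps$-causal, hence in particular $\hat g_\eps$-causal-or-constant on the relevant interval, and its a.e.\ derivative satisfies $\hat g_\eps(\alpha',\alpha')\le 0$. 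Since this holds for every $\eps>0$ and $\hat g_\eps \to g$ pointwise (indeed in the $C^0$-topology, $d_h(\hat g_\eps,g)\to 0$), letting $\eps\to 0$ yields $g(\alpha'(t),\alpha'(t))\le 0$ wherever $\alpha'(t)$ exists. One must still check that $\alpha$ is genuinely causal in the Lipschitz-curve sense, i.e.\ that it is future-directed and not locally constant on a set of positive measure; the former follows since $\omega(\alpha_{n_k}')<0$ uniformly (with $\omega$ the smooth timelike one-form from Prop.\ \ref{CGapprox}) passes to the limit to give $\omega(\alpha')\le 0$, and the $h$-arclength parametrisation forces $|\alpha'|_h=1$ a.e., so combined with the cylindrical-chart estimate $\alpha'\ne 0$ a.e.\ and in fact $\omega(\alpha')<0$ a.e.

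\textbf{Main obstacle.} The principal technical difficulty is the uniform-Lipschitz bound needed for Arzel\`a--Ascoli: one has to ensure that causal curves for a continuous metric, reparametrised by $h$-arclength, stay locally uniformly Lipschitz (with a bound depending only on a compact region, not on $n$), and that the reparametrisation does not destroy the limit at the left endpoint $t=0$ or shrink the common interval of definition to a point. This is handled exactly as in \cite{CG}: the cylindrical neighbourhoods of Def.\ \ref{def:cylindrical chart} provide, on compact sets, a two-sided comparison between the $h$-length and the ``time'' coordinate $x^0$ along any $g$-causal curve (property (2) gives $\|\vec v\| \le 2|v^0|$ for causal $v$, so the Euclidean speed is controlled by the $x^0$-speed, which is in turn controlled by any smooth timelike one-form), so arclength-parametrised causal curves have uniformly bounded coordinate speed and cannot escape a compact set in finite parameter time faster than a fixed rate; a standard diagonal/exhaustion argument over an increasing family of compacts then produces a limit curve defined on a full subinterval of $I$ containing $0$. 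Once this bookkeeping is in place, the regularisation argument above gives the causal character of the limit with essentially no further work, and the theorem follows.
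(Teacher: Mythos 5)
The paper itself gives no proof of Theorem \ref{accu}: it is imported verbatim from \cite[Th.\ 1.6]{CG}. Your reconstruction is, in all essentials, the argument given there: reparametrise by $h$-arclength, use the cylindrical-chart estimates to obtain locally uniform Lipschitz bounds and apply Arzel\`a--Ascoli, then derive the causal character of the limit by observing that every $g$-causal curve is $\hat g_\eps$-causal for the wider-cone metrics of Prop.\ \ref{CGapprox}, invoking the smooth limit-curve theorem for each fixed $\eps$, and letting $\eps\to 0$. This is the right approach and the structure is sound; it correctly identifies the two genuine issues (uniform Lipschitz control and the causal character of the limit).

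Two details need repair. First, your claim that the limit curve satisfies $|\alpha'|_h=1$ a.e.\ is false in general: a uniform limit of unit-speed Lipschitz curves only satisfies $|\alpha'|_h\le 1$ a.e.\ (zigzagging destroys the equality). Non-degeneracy and future-directedness must instead come from the cylindrical-chart estimate you already invoke: property (2) of Def.\ \ref{def:cylindrical chart} yields a constant $c>0$ with $(x^0\circ\alpha_{n_k})'\ge c$ a.e.\ in the $h$-arclength parametrisation; the resulting difference-quotient inequality $x^0(\alpha_{n_k}(t))-x^0(\alpha_{n_k}(s))\ge c\,(t-s)$ survives the uniform limit, so $(x^0\circ\alpha)'\ge c$ a.e.\ and hence $\alpha'\neq 0$ a.e.\ with the correct time orientation. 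Relatedly, ``$\omega(\alpha_{n_k}')<0$ uniformly passes to the limit'' is not automatic, since a.e.\ derivatives do not converge pointwise under uniform convergence; either use the difference-quotient argument just described, or note that the uniformly bounded derivatives $\alpha_{n_k}'$ converge weak-$*$ in $L^\infty$ to $\alpha'$, which preserves the a.e.\ bound $\omega(\cdot)\le -c$ (and likewise the membership of $\alpha'$ in the closed convex cones $J^+_{\hat g_\eps}$). Second, be aware that the statement as quoted is only correct under the implicit convention, made explicit in \cite{CG}, that the curves are suitably (e.g.\ $h$-arclength) parametrised on intervals that do not degenerate: for $\alpha_n(t):=\exp_p(tv/n)$ with $v$ causal, every accumulation curve is the constant $p$. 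Your reparametrisation step is therefore not merely convenient bookkeeping but is where the statement acquires its content.
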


%\textbf{Long version:}

\medskip\noindent
With these key tools at hand, and the results obtained so far,
causality theory for $C^{1,1}$-metrics can be further developed by
following the proofs given in \cite{Chrusciel_causality} for 
$C^2$-metrics. In the remainder of this section we list some 
main results that can be derived in this way.

Extendability of geodesics is characterised as follows (cf. 
\cite[Prop.\ 2.5.6]{Chrusciel_causality}):

\begin{Proposition}
Let $(M,g)$ be a spacetime with a $C^{1,1}$-Lorentzian metric $g$.
A geodesic $\alpha: I\to M$ is maximally extended as a geodesic if
and only if it is inextendible as a causal curve.
\end{Proposition}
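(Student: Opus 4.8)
The plan is to mimic the $C^2$-argument of \cite[Prop.\ 2.5.6]{Chrusciel_causality}, patching the one place where $C^2$-regularity is genuinely used, namely the passage from a merely continuous to a \emph{geodesic} extension. I would first record two routine preliminaries. Along any geodesic $\alpha$ of the $C^{1,1}$-metric $g$, the map $t\mapsto g_{\alpha(t)}(\alpha'(t),\alpha'(t))$ is $C^1$ (as $g\in C^1$ and $\alpha\in C^2$ by standard ODE theory) with vanishing derivative by the usual computation using the geodesic equation, hence constant; so $\alpha$ has a fixed causal character and, the statement being vacuous for a spacelike geodesic, I may assume $\alpha$ is future-directed causal (the past-directed case is symmetric). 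Moreover a geodesic whose velocity vanishes somewhere is constant, hence not a future-directed causal curve, so $\alpha'$ is nowhere zero; thus, for a timelike vector field $X$ defining the time orientation, $t\mapsto\langle X(\alpha(t)),\alpha'(t)\rangle$ is continuous, nowhere zero, and negative on the domain of $\alpha$. The easy implication follows at once: any geodesic extension of $\alpha$ is again future-directed causal — $g$ on its velocity, and the sign of $\langle X,\alpha'\rangle$ along it, being constant — hence is in particular a causal-curve extension; contrapositively, if $\alpha$ is inextendible as a causal curve then it is maximally extended as a geodesic.

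For the converse I would argue by contradiction, assuming $\alpha$ extendible as a causal curve. Unbounded ends of the parameter interval admit no extension, and a closed end $t=b\in I$ extends as a geodesic immediately, by local existence and uniqueness for the geodesic spray (locally Lipschitz since $g\in C^{1,1}$) applied at $(\alpha(b),\alpha'(b))$; so I reduce to $\alpha:[0,b)\to M$ with $b<\infty$, where the causal-curve extension forces $q:=\lim_{t\to b^-}\alpha(t)$ to exist. Using Theorem \ref{totally} I pick a totally normal neighbourhood $U\ni q$, so that for every $p\in U$ the map $\exp_p:\tilde U_p\to U$ is a bi-Lipschitz homeomorphism with $\tilde U_p\subseteq T_pM$ open and star-shaped about $0$ (Theorem \ref{mainpseudo}), and $U$ is geodesically convex. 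Choosing $t_0<b$ with $\alpha([t_0,b))\subseteq U$, and writing $p_0:=\alpha(t_0)$, $\overrightarrow{p_0r}:=\exp_{p_0}^{-1}(r)$, the crucial step is: for each fixed $t_1\in(t_0,b)$, both $\alpha|_{[t_0,t_1]}$ reparametrised affinely onto $[0,1]$ and the radial geodesic $r\mapsto\exp_{p_0}(r\,\overrightarrow{p_0\alpha(t_1)})$ (which stays in $U$ by star-shapedness of $\tilde U_{p_0}$) are geodesics $[0,1]\to U$ from $p_0$ to $\alpha(t_1)$, hence coincide by geodesic convexity of $U$; that is,
\[
 \alpha(t)=\exp_{p_0}\!\Big(\tfrac{t-t_0}{t_1-t_0}\,\overrightarrow{p_0\alpha(t_1)}\Big),\qquad t\in[t_0,t_1].
\]
Differentiating at $t=t_0$ — the right-hand side is a smooth reparametrisation of the $C^2$-geodesic $r\mapsto\exp_{p_0}(r\,\overrightarrow{p_0\alpha(t_1)})$, which has velocity $\overrightarrow{p_0\alpha(t_1)}$ at $r=0$ — gives $\alpha'(t_0)=\tfrac{1}{t_1-t_0}\,\overrightarrow{p_0\alpha(t_1)}$; and letting $t_1\to b^-$, using continuity of $\exp_{p_0}^{-1}$ on $U$, yields $\overrightarrow{p_0q}=(b-t_0)\,\alpha'(t_0)$ together with $\alpha(t)=\exp_{p_0}\!\big((t-t_0)\,\alpha'(t_0)\big)$ for all $t\in[t_0,b)$.

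To close the argument I would note that the segment $\{(t-t_0)\alpha'(t_0):t\in[t_0,b]\}$ runs from $0$ to $\overrightarrow{p_0q}$ and hence lies in the open star-shaped domain $\tilde U_{p_0}$ of $\exp_{p_0}$; therefore the maximal geodesic $\gamma$ with $\gamma(t_0)=p_0$, $\gamma'(t_0)=\alpha'(t_0)$ is defined at least on $[t_0,b]$ and there equals $\exp_{p_0}((t-t_0)\alpha'(t_0))$, so $\gamma|_{[t_0,b)}=\alpha|_{[t_0,b)}$ and $\gamma(b)=q$. As the interval of existence of a solution of the geodesic ODE (locally Lipschitz) is open, $\gamma$ is a genuine geodesic on $[t_0,b+\delta)$ for some $\delta>0$. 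Gluing $\gamma$ onto $\alpha|_{I\cap(-\infty,t_0]}$ — they share their $1$-jet at $t_0$, hence coincide near it — produces a geodesic on an interval strictly containing $I$, contradicting maximality. I expect the only real obstacle to be the step leading to the displayed identity — extracting a radial-geodesic representation of $\alpha$ near the endpoint — which is exactly where the smooth proof invokes convex normal neighbourhoods; here it is supplied by geodesic convexity of totally normal sets for $C^{1,1}$-metrics (Theorem \ref{totally}) and continuity of $\exp_{p_0}^{-1}$ (Theorem \ref{mainpseudo}), after which the extension is forced, just as in the smooth case, by uniqueness and openness of the maximal interval for the ODE of the Lipschitz geodesic spray.
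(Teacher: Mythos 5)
Your argument is correct and is essentially the proof the paper has in mind: the Proposition is stated there without proof as the $C^{1,1}$-adaptation of \cite[Prop.\ 2.5.6]{Chrusciel_causality}, and your write-up supplies exactly the intended details, using geodesic convexity of totally normal neighbourhoods (Th.\ \ref{totally}), continuity of $\exp_{p_0}^{-1}$ (Th.\ \ref{mainpseudo}), and uniqueness plus openness of the maximal interval for the Lipschitz geodesic spray at precisely the points where the smooth proof needs them. The only step you pass over quickly is that an end with infinite affine parameter admits no causal extension, which is immediate under the domain-extension notion of extendibility used here (and otherwise follows from the standard estimate that an affinely parametrised causal geodesic cannot converge to a point).
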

%\begin{Theorem}
Furthermore, it is already shown in \cite[Th.\ 2.5.7]{Chrusciel_causality})
that even if the metric is merely supposed to be continuous, every
future directed causal (resp.\ timelike) curve possesses an 
inextendible causal (resp.\ timelike) extension of $\alpha$.
%\end{Theorem}
As a direct consequence of Cor.\ \ref{boundary} we obtain (cf.\ \cite[Prop.\ 2.6.9]{Chrusciel_causality}):
\begin{Proposition}
Let $g$ be a $C^{1,1}$-Lorentzian metric on $M$. If $\alpha$ is an achronal
causal curve, then $\alpha$ is a null geodesic.
\end{Proposition}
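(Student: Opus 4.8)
The plan is to reduce the statement to the local result already obtained, Corollary \ref{boundary}, the key point being that achronality forces the endpoint of every short subarc of $\alpha$ to land on the boundary of the local causal future of its initial point. Throughout, ``$\alpha$ is a null geodesic'' is understood up to reparametrisation, as in \cite[Cor.\ 2.4.10]{Chrusciel_causality}.

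First I would fix a parameter value $t$ in the (interior of the) domain of $\alpha$. Using Th.\ \ref{totally} and Th.\ \ref{lcb}, pick a totally normal neighbourhood $U$ of $p:=\alpha(t)$ of the type described in Th.\ \ref{lcb}, and choose $\delta>0$ with $\alpha([t,t+\delta])\subseteq U$. For every $t'\in(t,t+\delta]$ the arc $\alpha|_{[t,t']}$ is a future-directed causal curve in $U$ starting at $p$, so $\alpha(t')\in J^+(p,U)$. On the other hand $I^+(p,U)\subseteq I^+(p)$, so achronality of $\alpha$ gives $\alpha(t')\notin I^+(p,U)$. Since by Th.\ \ref{lcb} we have $J^+(p,U)\setminus I^+(p,U)=\partial J^+(p,U)$, it follows that $\alpha(t')\in\partial J^+(p,U)$. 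Corollary \ref{boundary} now applies to $\alpha|_{[t,t']}$ and yields that $\alpha|_{[t,t']}$ lies entirely in $\partial J^+(p,U)$ and can be reparametrised as a null-geodesic segment; in particular $g(\alpha'(s),\alpha'(s))=0$ wherever it exists, so there is no need to invoke Prop.\ \ref{nonnull} separately (though it gives the nullity directly as well). Running the same argument to the left of $t$, with $J^-$ and $I^-$ in place of $J^+$ and $I^+$, shows that a two-sided parameter neighbourhood of $t$ is carried, after a monotone reparametrisation, onto a null geodesic through $p$.

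Finally I would patch these local pieces. For a $C^{1,1}$-metric the geodesic equation has unique solutions (this is exactly the regularity threshold underlying Th.\ \ref{mainpseudo}), so two affinely parametrised null geodesics that agree on an open set coincide. Hence the null geodesics obtained on overlapping parameter intervals fit together to a single null geodesic whose image contains $\alpha$, and $\alpha$ is a monotone continuous (indeed locally Lipschitz) reparametrisation of it. Covering each compact part of the domain by finitely many of the intervals constructed above, one concludes that $\alpha$ is, up to reparametrisation, a null geodesic. I do not expect a serious obstacle here: the only delicate point is the bookkeeping of the reparametrisations furnished by Cor.\ \ref{boundary}, which is handled exactly as in the proof of that corollary (passing to the parameter $r(s)=\int_0^s f(\tau)\,d\tau$) together with the uniqueness of geodesics.
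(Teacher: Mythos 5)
Your reduction to Corollary \ref{boundary} is exactly the route the paper intends (the proposition is stated there as a direct consequence of that corollary, following \cite[Prop.\ 2.6.9]{Chrusciel_causality}), and your first paragraph is correct: achronality forces $\alpha(t')\in J^+(p,U)\setminus I^+(p,U)=\partial J^+(p,U)$, so Cor.\ \ref{boundary} makes $\alpha|_{[t,t+\delta]}$ a reparametrised null geodesic segment. The gap is in the sentence claiming that the left-hand and right-hand pieces combine into ``a null geodesic through $p$''. The two applications of Cor.\ \ref{boundary} produce one null geodesic on $[t-\delta',t]$ and another on $[t,t+\delta]$; these share only the single point $p=\alpha(t)$, so the uniqueness principle you invoke later (``two affinely parametrised null geodesics that agree on an open set coincide'') does not apply to them, and a priori you only have a \emph{broken} null geodesic with a possible corner at $p$. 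This is not a cosmetic point: the corner-free statement on an \emph{open} parameter interval around each $t$ is precisely what your final patching needs, since the one-sided intervals $[t,t+\delta_t]$ by themselves need not overlap in sets with nonempty interior when you chain them.

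The repair is easy with the tools you already cite: use that $U$ may be taken \emph{totally} normal (Th.\ \ref{totally}, Th.\ \ref{lcb}), choose $t_1<t<t_2$ with $\alpha([t_1,t_2])\subseteq U$, and run your paragraph-one argument based at $q:=\alpha(t_1)$ rather than at $\alpha(t)$. Achronality gives $\alpha(t_2)\in J^+(q,U)\setminus I^+(q,U)=\partial J^+(q,U)$, and Cor.\ \ref{boundary} (equivalently Cor.\ \ref{on}) then yields that $\alpha|_{[t_1,t_2]}$ is a single unbroken null geodesic segment containing $t$ in the interior of its parameter interval. (Alternatively one can argue that a genuine corner at $p$ would destroy achronality, by producing via the push-up results a timelike curve from $\alpha(t-\delta')$ to $\alpha(t+\delta)$; but the shift of base point is the cleaner fix.) After this modification the open intervals $(t_1,t_2)$ cover the domain, overlapping ones share open sets, and your ODE-uniqueness patching goes through verbatim; the proof is then complete and agrees in substance with the paper's.
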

%Now looking at sequences of maximally extended geodesics or at sequences of
%achronal causal curves (curves such that $\forall t_1,t_2\in I,\ 
%\alpha(t_1)\notin I^+(\alpha(t_2))$), the following is easily obtained:
For sequences of curves, \cite[Prop.\ 2.6.8, Th.\ 2.6.10]{Chrusciel_causality}) give:
\begin{Proposition}
Let $g$ be a $C^{1,1}$-Lorentzian metric on $M$. If $\alpha_n: I\to M$ 
is a sequence of maximally extended geodesics accumulating at $\alpha$, 
then $\alpha$ is a maximally extended geodesic.
\end{Proposition}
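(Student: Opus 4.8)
The plan is to adapt the proofs of \cite[Prop.\ 2.6.8 and Th.\ 2.6.10]{Chrusciel_causality}, the only new input being that for a $C^{1,1}$-metric the (pre-)geodesic equation is an ODE with Lipschitz-continuous right-hand side, so that uniqueness of solutions and continuous dependence on initial data over compact parameter intervals remain available (cf.\ \cite[Th.\ 8.4]{Amann} and Rem.\ \ref{regrem}). First I would fix the set-up: by Theorem \ref{accu} we may pass to a subsequence so that $\alpha_n\to\alpha$ uniformly on compact subsets of the domain $I$ of $\alpha$, with $I$ taken maximal among intervals on which such uniform convergence holds, and with $\alpha$ causal. As in the limit curve theorem we parametrise the $\alpha_n$, hence also $\alpha$, by $h$-arclength; then each $\alpha_n$ solves the pregeodesic equation, which is again an ODE with Lipschitz right-hand side and which affinely reparametrises to the genuine geodesic equation.

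The first step is to show that $\alpha$ is a geodesic, which is exactly \cite[Prop.\ 2.6.8]{Chrusciel_causality} with the ODE regularity lowered to Lipschitz. Fix $t_0\in I$. Since $\|\alpha_n'(t_0)\|_h=1$, after a further subsequence $\alpha_n'(t_0)\to v\in T_{\alpha(t_0)}M$. By continuous dependence on initial data, $\alpha_n\to\gamma$ in $C^1$ on every compact subinterval of the maximal domain of the pregeodesic $\gamma$ through $\alpha(t_0)$ with velocity $v$; since $C^1$-convergence is in particular uniform convergence, $\gamma=\alpha$ on $I$, so $\alpha$ is a pregeodesic and, after affine reparametrisation, a geodesic.

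The second step is to show that $\alpha$ is maximally extended. Suppose not. Then $\alpha$ admits a proper geodesic extension, which by the characterisation of maximal extension via inextendibility as a causal curve (the Proposition immediately preceding this one) means that, say at a finite right endpoint $b=\sup I$, the limit $\bar\alpha(b):=\lim_{s\to b^-}\alpha(s)$ exists and $\alpha$ extends to a geodesic $\bar\alpha$ on $[t_0,b+\delta]$ for some $\delta>0$ and some $t_0\in I$. By continuous dependence on initial data over the compact interval $[t_0,b+\delta]$, for all large $n$ the geodesic with data $(\alpha_n(t_0),\alpha_n'(t_0))$ is defined on $[t_0,b+\delta/2]$ and converges there, in $C^1$, to $\bar\alpha$. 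But each $\alpha_n$ is \emph{maximally} extended, hence it coincides with this geodesic on the whole of $[t_0,b+\delta/2]$; consequently the chosen subsequence of the $\alpha_n$ converges uniformly on $[t_0,b+\delta/2]$, which extends the accumulation curve beyond $b$ and contradicts the maximality of $I$. The same argument applies at the left endpoint, so $\alpha$ is a maximally extended geodesic, mirroring \cite[Th.\ 2.6.10]{Chrusciel_causality}.

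The main obstacle is really the bookkeeping: keeping the parametrisations ($h$-arclength versus affine) consistent, and reconciling the two notions of ``maximally extended'' (as a geodesic versus as a causal curve) through the preceding Proposition and Theorem \ref{lcb}. Once these are in place, the crucial point — that maximal extension of the $\alpha_n$ together with continuous dependence on a compact parameter interval forbids the limit curve from terminating prematurely — is straightforward.
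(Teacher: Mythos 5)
Your proposal is correct and follows exactly the route the paper intends: the paper offers no proof of its own but simply asserts that Chru\'sciel's arguments for \cite[Prop.\ 2.6.8, Th.\ 2.6.10]{Chrusciel_causality} carry over once uniqueness and continuous dependence for the (pre)geodesic equation with Lipschitz right-hand side, the limit curve theorem (Theorem \ref{accu}), and the characterisation of maximal geodesic extension via causal inextendibility are in place. Your two-step argument (limit is a geodesic via convergence of initial data; maximality via lower semicontinuity of existence intervals and maximality of the $\alpha_n$) is precisely that adaptation, so no further comparison is needed.
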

\begin{Theorem}
Let $(M,g)$ be a spacetime with a $C^{1,1}$-Lorentzian metric $g$ and
let $\alpha_n:I\to M$ be a sequence of achronal causal curves accumulating 
at $\alpha$. Then $\alpha$ is achronal.
\end{Theorem}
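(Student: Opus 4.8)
The plan is to combine the limit-curve theorem (Theorem~\ref{accu}) with the characterisation of achronality via $I^+$, arguing by contradiction from a putative chronological chord on the accumulation curve $\alpha$. First I would recall that, since $\alpha$ is an accumulation curve of the $\alpha_n$, there is a subsequence (not relabelled) converging to $\alpha$ uniformly on compact subsets of $I$, and that $\alpha$ is itself causal by Theorem~\ref{accu}. Now suppose $\alpha$ is \emph{not} achronal: there exist parameters $s<t$ in $I$ with $\alpha(t)\in I^+(\alpha(s))$. Since $I^+$ is open (as established in the corollaries following Theorem~\ref{lcb}, using openness of $\ll$), and more precisely since the relation $\ll$ is open, there are neighbourhoods $V$ of $\alpha(s)$ and $W$ of $\alpha(t)$ with $p'\ll q'$ for all $p'\in V$, $q'\in W$. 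By uniform convergence on the compact set $\{s,t\}$ (or a compact subinterval containing both), for $n$ large we have $\alpha_n(s)\in V$ and $\alpha_n(t)\in W$, hence $\alpha_n(s)\ll \alpha_n(t)$, i.e.\ $\alpha_n$ is not achronal — contradicting the hypothesis that each $\alpha_n$ is achronal.

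The one subtlety to be careful about is the parametrisation: the limit-curve theorem produces $\alpha$ as a uniform-on-compacta limit of a subsequence, but the $\alpha_n$ may be reparametrised in the process (as is standard, one passes to $h$-arclength parametrisation and uses the uniform Lipschitz bound). This does not affect the argument, because achronality is a property of the \emph{image} of a causal curve, not of its parametrisation: $\alpha_n$ achronal means $\alpha_n(I')\cap I^+(\alpha_n(s'))=\emptyset$ for the endpoint $\alpha_n(s')$, equivalently no two points on the trace of $\alpha_n$ are chronologically related. So in the contradiction step I should phrase it as: the points $\alpha_n(s),\alpha_n(t)$ lie on the trace of $\alpha_n$ and satisfy $\alpha_n(s)\ll\alpha_n(t)$, which contradicts achronality of the trace of $\alpha_n$ regardless of how $\alpha_n$ is parametrised.

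The main obstacle — really the only non-formal point — is ensuring that the chronology relation $\ll$ is genuinely open in the $C^{1,1}$ setting, so that the open neighbourhoods $V$, $W$ exist; but this is exactly the content of the corollary (``The relation $\ll$ is open'') proved above using Theorem~\ref{lcb} and the local structure of $I^+(p,U)$ in totally normal neighbourhoods. Granting that, the proof is a short contradiction argument: assume a chronological chord on $\alpha$, pull it back along the convergent subsequence using openness of $\ll$ and uniform convergence, and contradict achronality of the approximating curves. No curvature, no further regularity input, and no push-up lemma is needed here — it is purely a stability statement, entirely parallel to the $C^2$ proof in \cite[Th.\ 2.6.10]{Chrusciel_causality}.
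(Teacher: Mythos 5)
Your proof is correct and follows essentially the same route the paper intends: the paper simply defers to the $C^2$ argument of \cite[Th.\ 2.6.10]{Chrusciel_causality}, which is exactly your contradiction argument combining openness of $\ll$ (the corollary to Theorem \ref{lcb}) with uniform convergence of the subsequence on a compact set containing the two parameters. Your care about parametrisation is consistent with the paper's definition of accumulation curve, which already fixes the parameter domain $I$, so nothing further is needed.
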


\medskip\noindent
Causality conditions and notions such as domains of dependence and Cauchy
horizons can be defined independently of the regularity of
the metric. As an example of the interrelation of causality conditions
for metrics of low regularity, we mention
\cite[Prop.\ 2.7.4]{Chrusciel_causality}, which shows that if a spacetime with 
continuous metric is stably causal then it is strongly causal. 
Turning now to globally hyperbolic spacetimes, 
\cite[Prop.\ 2.8.1, Cor.\ 2.8.4, Th.\ 2.8.5]{Chrusciel_causality} give:
\begin{Proposition}
Let $(M,g)$ be a globally hyperbolic spacetime with $g$ a $C^{1,1}$-Lorentzian
metric and let $\alpha_n$ be a sequence of causal curves accumulating at 
both $p$ and $q$. Then there exists a causal curve $\alpha$ which is an
accumulation curve of the $\alpha_n$'s and passes through $p$ and $q$.
\end{Proposition}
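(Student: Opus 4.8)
The plan is to follow the proof of \cite[Th.\ 2.8.5]{Chrusciel_causality}, together with its companions \cite[Prop.\ 2.8.1, Cor.\ 2.8.4]{Chrusciel_causality}, essentially verbatim. That argument is purely topological: it rests on the limit curve theorem for causal curves, on the local causality machinery in (totally) normal neighbourhoods, and on compactness of causal diamonds---and all three are by now available in the $C^{1,1}$ (in fact $C^0$) setting, by Theorem \ref{accu}, by Theorem \ref{lcb} and its corollaries, and by the very definition of global hyperbolicity, respectively. So the real content is to verify that no regularity beyond $C^{1,1}$ enters anywhere, and to recall how the argument is put together.

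First I would reduce to a compact region. Fix an auxiliary Riemannian metric $h$ on $M$; discard the trivial case $p=q$; and, passing to a subsequence and reversing all the curves if necessary, assume that the $\alpha_n$ are future-directed and (after reparametrisation, and using that the assertion is symmetric in $p$ and $q$) that there are parameters $0\le t_n$ in the domain of $\alpha_n$ with $\alpha_n(0)\to p$ and $\alpha_n(t_n)\to q$. Pick $p'\ll p$ and $q\ll q'$. Since $I^+(p')$ and $I^-(q')$ are open, $\alpha_n(0)\in I^+(p')$ and $\alpha_n(t_n)\in I^-(q')$ for all large $n$, and as causal curves concatenate the portion $\alpha_n|_{[0,t_n]}$ then lies inside $K:=J^+(p')\cap J^-(q')$, which is compact because $(M,g)$ is globally hyperbolic.

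The heart of the matter---and the step I expect to be the main obstacle---is to produce a limit curve passing through \emph{both} $p$ and $q$, rather than only through $p$ as Theorem \ref{accu} would give directly: a priori the $h$-arclengths $t_n$ could be unbounded, so that $q$ recedes to infinite parameter in the limit. This is exactly the point at which global hyperbolicity, and not merely strong causality, is needed. Here I would use that a globally hyperbolic spacetime is in particular strongly causal, so that causal curves contained in the fixed compact set $K$ have uniformly bounded $h$-arclength---a standard consequence of strong causality that holds also for continuous metrics (cf.\ \cite{SW}, and the $C^2$ argument of \cite{Chrusciel_causality}). Thus, reparametrising each $\alpha_n$ by $h$-arclength and restricting to $[0,t_n]$, the resulting curves have uniformly bounded length, so along a further subsequence $t_n\to t_\infty\in[0,\infty)$ and, by (the proof of) Theorem \ref{accu}, these curves converge uniformly to a causal curve $\alpha\colon[0,t_\infty]\to K$ with $\alpha(0)=p$; since $\alpha_n(t_n)\to q$ with $t_n\to t_\infty$, uniform convergence then also yields $\alpha(t_\infty)=q$. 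Hence $\alpha$ is a causal accumulation curve of the $\alpha_n$ that passes through $p$ and $q$.

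Beyond this uniform control of the parametrisation inside a compact set, the argument is the same point-set topology as in the $C^2$ treatment of \cite{Chrusciel_causality}, so I do not anticipate any further difficulty.
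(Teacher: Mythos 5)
Your proposal is correct and takes essentially the same route as the paper, which for this proposition simply invokes \cite[Prop.\ 2.8.1, Th.\ 2.8.5]{Chrusciel_causality} and notes that the $C^2$ argument carries over once the limit curve theorem (Theorem \ref{accu}), the local causality structure (Theorem \ref{lcb}), and compactness of causal diamonds are in place. Your identification of the key non-trivial step---the uniform bound on the $h$-arclength of causal curves confined to the compact diamond $J^+(p')\cap J^-(q')$, coming from strong causality and valid already for continuous metrics---is exactly the point where Chru\'sciel's proof does its real work, so nothing is missing.
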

\begin{Corollary}
If $M$ is a spacetime with a $C^{1,1}$-Lorentzian metric $g$ that is
globally hyperbolic, then 
\begin{equation*}
\overline{I^{\pm}(p)}=J^{\pm}(p). 
\end{equation*}
\end{Corollary}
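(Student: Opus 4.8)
The plan is to deduce this from the accumulation result for globally hyperbolic spacetimes stated immediately above, after first reducing the claim to the closedness of $J^{\pm}(p)$. By time-reversal symmetry it suffices to treat $J^{+}(p)$. Part (iii) of the Corollary obtained above from \cite[Lemma 14.6]{ON83} tells us that $\overline{I^{+}(p)}=J^{+}(p)$ holds precisely when $J^{+}(p)$ is closed, so the entire statement comes down to showing that global hyperbolicity forces $J^{+}(p)$ to be closed.

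To prove closedness, take $q\in\overline{J^{+}(p)}$ and a sequence $q_{n}\in J^{+}(p)$ with $q_{n}\to q$; we may assume $q\neq p$. For each $n$ pick a future-directed causal curve $\alpha_{n}$ from $p$ to $q_{n}$, parametrised on $[0,L_{n}]$ by $h$-arclength so that $\alpha_{n}(0)=p$ and $\alpha_{n}(L_{n})=q_{n}$. Since all the $\alpha_{n}$ emanate from $p$, they accumulate at $p$; moreover $\alpha_{n}(L_{n})=q_{n}\to q$, so the sequence also accumulates at $q$. Applying the preceding Proposition, after passing to a subsequence the $\alpha_{n}$ converge, uniformly on compact sets, to a causal curve $\alpha$ passing through both $p$ and $q$. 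A consistency check on the parametrisation shows that $\alpha$ is future-directed starting at $p$ and reaches $q$ afterwards, hence $q\in J^{+}(p)$. This establishes $J^{+}(p)=\overline{J^{+}(p)}$, and by part (iii) of the Corollary above we conclude $\overline{I^{+}(p)}=J^{+}(p)$; the same argument applied to the reversed time-orientation handles $J^{-}(p)$ and $I^{-}(p)$.

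The step requiring care is ensuring that the limit curve genuinely connects $p$ to $q$ in the correct causal order, rather than merely containing both points, and that the curves $\alpha_{n}$ do not run off to infinity before reaching $q_{n}$ in a way that would destroy the limit. This is exactly where global hyperbolicity is used: choosing $r$ with $q\ll r$ (possible since $g$ admits timelike vectors at every point) and invoking openness of $\ll$, we get $q_{n}\in I^{-}(r)$ for large $n$, so the images $\alpha_{n}([0,L_{n}])$ lie in the compact set $J^{+}(p)\cap J^{-}(r)$. This compactness bounds the $h$-lengths $L_{n}$ and feeds directly into the limit-curve construction underlying Theorem \ref{accu} and the preceding Proposition, producing a future-directed causal curve from $p$ that passes through $q$ with the correct orientation. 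With the limit curve thus controlled, the reduction via the $\overline{I^{+}(p)} = J^{+}(p) \Leftrightarrow J^{+}(p)\text{ closed}$ equivalence finishes the argument.
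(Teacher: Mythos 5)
Your argument is correct and follows essentially the route the paper intends: the paper derives this Corollary by adapting the proof of \cite[Cor.\ 2.8.4]{Chrusciel_causality}, which is exactly your reduction to closedness of $J^{+}(p)$ via part (iii) of the earlier Corollary, followed by the limit-curve argument using the preceding Proposition together with compactness of $J^{+}(p)\cap J^{-}(r)$ and openness of $\ll$ to control the endpoints. No substantive gaps.
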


\begin{Theorem}
For a globally hyperbolic spacetime $M$ with a $C^{1,1}$-metric $g$, if
$q\in I^+(p)$, resp. $q\in J^+(p)$, there exists a timelike, resp. causal,
future directed geodesic from $p$ to $q$.
\end{Theorem}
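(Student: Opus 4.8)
The plan is to run the Avez--Seifert argument: show that the Lorentzian length functional attains its supremum on causal curves from $p$ to $q$ and that a maximiser must be a geodesic. I begin with reductions. If $q=p$ there is nothing to prove, and if $q\in J^+(p)\setminus(I^+(p)\cup\{p\})$ then Cor.\ \ref{on} (applied with $A=\{p\}$) shows that \emph{every} causal curve from $p$ to $q$ is already a null geodesic; since $I^+(p)\subseteq J^+(p)$ it therefore suffices to treat $q\in I^+(p)$ and to produce a timelike maximising geodesic, which settles the timelike and the causal claim simultaneously. For a causal (hence locally Lipschitz) curve $\sigma\colon[0,1]\to M$ put $L_g(\sigma):=\int_0^1\sqrt{-g(\sigma'(t),\sigma'(t))}\,dt$ (well defined by Rademacher) and $d(p,q):=\sup L_g(\sigma)$, the supremum over such curves; note $d(p,q)>0$ since $q\in I^+(p)$.

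For existence of a maximiser, take causal curves $\alpha_n$ from $p$ to $q$ with $L_g(\alpha_n)\to d(p,q)$, parametrised by $h$-arclength. By the globally hyperbolic accumulation-curve result recalled above (which rests on Theorem \ref{accu}), after passing to a subsequence there is a causal curve $\alpha$ from $p$ to $q$ that is an accumulation curve of the $\alpha_n$; being a causal curve confined to a compact set, $\alpha$ has finite Lorentzian length. To see that $\alpha$ is a maximiser I prove upper semicontinuity of $L_g$ by comparison with the outer regularisations: with $\hat g_\eps$ as in Prop.\ \ref{CGapprox} one has $-\hat g_\eps(X,X)\ge-g(X,X)$ on $g$-causal vectors $X$ by construction, so every $\alpha_n$ and $\alpha$ is $\hat g_\eps$-causal and $L_g(\alpha_n)\le L_{\hat g_\eps}(\alpha_n)$; since $L_{\hat g_\eps}$ is upper semicontinuous for the smooth metric $\hat g_\eps$, $\limsup_n L_g(\alpha_n)\le L_{\hat g_\eps}(\alpha)$, and letting $\eps\to0$, dominated convergence (using $d_h(\hat g_\eps,g)\to0$ and that $\alpha$ is Lipschitz) gives $L_{\hat g_\eps}(\alpha)\to L_g(\alpha)$. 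Hence $d(p,q)\le L_g(\alpha)$ and $\alpha$ is a maximiser.

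It remains to show that a maximiser reparametrises as a geodesic. Cover $\alpha([0,1])$ by finitely many totally normal neighbourhoods as in Thms.\ \ref{totally} and \ref{lcb}, choose $0=t_0<\dots<t_N=1$ with $\alpha([t_i,t_{i+1}])\subseteq U_{i+1}$, fix $i$, and set $a=\alpha(t_i)$, $b=\alpha(t_{i+1})$, $\sigma=\alpha|_{[t_i,t_{i+1}]}$, with $\gamma$ the radial $g$-geodesic in $U_{i+1}$ from $a$ to $b$ (Th.\ \ref{lcb}). First, $L_g(\sigma)\le L_g(\gamma)$ by approximation: by Lemma \ref{uniform} the $\hat g_\eps$-radial geodesics $\gamma_\eps$ from $a$ to $b$ exist for small $\eps$, maximise $\hat g_\eps$-length among $\hat g_\eps$-causal curves in $U_{i+1}$ (classical local maximisation for the smooth $\hat g_\eps$), and converge to $\gamma$ in $C^1$, so $L_g(\sigma)\le L_{\hat g_\eps}(\sigma)\le L_{\hat g_\eps}(\gamma_\eps)\to L_g(\gamma)$. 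Replacing $\sigma$ by $\gamma$ inside $\alpha$ gives another causal curve from $p$ to $q$ of length $L_g(\alpha)-L_g(\sigma)+L_g(\gamma)$, so maximality forces $L_g(\sigma)=L_g(\gamma)$, and one must deduce that $\sigma$ is a reparametrised radial geodesic. If $b\in\partial J^+(a,U_{i+1})$ this already follows from Cor.\ \ref{boundary} ($\sigma$ reparametrises as a null geodesic). If $b\in I^+(a,U_{i+1})$, one uses the Gauss Lemma: with $P$, $Q$ the position field and quadratic form based at $a$, one has $\langle P,P\rangle=Q$ and $\grad Q=2P$ a.e.\ on $U_{i+1}$ (as computed before Th.\ \ref{lcb}); writing $\sigma'=\mu\,P(\sigma)+N$ with $N$ $g$-orthogonal to $P(\sigma)$ (valid a.e.\ where $Q\circ\sigma<0$) gives $\tfrac{d}{dt}Q(\sigma(t))=2\mu\,Q(\sigma(t))$ and $-g(\sigma',\sigma')=-\mu^2Q-g(N,N)\le-\mu^2Q$ since $N$ is $g$-spacelike, hence $\sqrt{-g(\sigma',\sigma')}\le\mu\sqrt{-Q}=\tfrac{d}{dt}\sqrt{-Q(\sigma(t))}$ a.e.; integrating gives $L_g(\sigma)\le\sqrt{-Q(b)}=L_g(\gamma)$, and equality forces $N\equiv0$, i.e.\ $\sigma'\parallel P(\sigma)$ a.e., which --- as in the proof of Cor.\ \ref{boundary} --- means $\exp_a^{-1}\circ\sigma$ is a radial ray and $\sigma$ a reparametrised radial geodesic. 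Applying the same local analysis to a totally normal neighbourhood straddling each junction $\alpha(t_i)$ rules out corners, so $\alpha$ reparametrises as an unbroken causal geodesic from $p$ to $q$; being a geodesic of length $d(p,q)>0$, it is timelike.

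The main obstacle is the local rigidity in the timelike sub-case, specifically the chain-rule and fundamental-theorem-of-calculus step for $t\mapsto\sqrt{-Q(\sigma(t))}$, which is only $1/2$-H\"older at parameter values where $\sigma$ meets $\partial J^+(a,\cdot)$ (there $Q\circ\sigma$ vanishes and $P$ degenerates to a null vector), together with the a.e.\ orthogonal splitting of $\sigma'$. The cleanest remedy is to use Cor.\ \ref{boundary} and the push-up Propositions \ref{push-up1} and \ref{nonnull} to reduce beforehand to the two clean alternatives --- either $\sigma$ stays on $\partial J^+(a,\cdot)$ (the null-geodesic case) or $\sigma$ lies in $I^+(a,\cdot)$ for all interior parameters, so that $Q\circ\sigma<0$ there and all the a.e.\ computations take place where $Q$ and the splitting are locally Lipschitz. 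The remaining points --- upper semicontinuity of $L_g$ (semicontinuity for the smooth comparison metric and the dominated-convergence step) and confining the maximising sequence to a compact set --- are routine but should be spelled out.
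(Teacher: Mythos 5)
The paper gives no proof of this theorem---it is listed among the results obtained by transplanting Chru\'sciel's $C^2$ arguments (here \cite[Th.\ 2.8.5]{Chrusciel_causality}, i.e.\ the Avez--Seifert length-maximisation argument) using the tools of Sections 2--3, and your proposal is exactly that transplantation, with the $C^{1,1}$-specific steps (upper semicontinuity of $L_g$ and local maximality of radial geodesics via the $\hat g_\eps$-sandwich, rigidity via the Gauss Lemma, reduction of the $J^+\setminus I^+$ case to Cor.\ \ref{on}) correctly filled in. One small correction to your closing discussion: the dichotomy ``$\sigma$ stays on $\partial J^+(a,\cdot)$ or lies in $I^+(a,\cdot)$ for all interior parameters'' cannot be arranged \emph{beforehand}, since a causal curve may run along the null cone before pushing up into $I^+(a,\cdot)$; the mixed case is instead excluded \emph{a posteriori} by your own equality analysis, because the null initial segment is a null geodesic of zero length by Cor.\ \ref{boundary}, the fundamental-theorem-of-calculus argument on the remaining piece still yields $L_g(\sigma)\le\sqrt{-Q(b)}$ starting from $\sqrt{-Q}=0$ at the transition point, and equality would then force radiality of a segment passing through a nonzero null vector yet ending at a timelike-related point, which is impossible.
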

Moreover, the proof of \cite[Th.\ 2.9.9]{Chrusciel_causality} can be
adapted to show:
\begin{Theorem}
Let $M$ be a spacetime with a $C^{1,1}$-Lorentzian metric $g$ and let $S$
be an achronal hypersurface in $(M,g)$. Suppose that the interior 
$D^\circ_I(S)$ of the domain of dependence $D_I(S)$ is nonempty. Then 
$D^\circ_I(S)$ equipped with the metric obtained by restricting $g$ is
globally hyperbolic.
\end{Theorem}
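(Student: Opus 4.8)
The plan is to follow the proof of \cite[Th.\ 2.9.9]{Chrusciel_causality} essentially line by line, replacing each appeal to $C^2$-specific machinery (smooth convex normal neighbourhoods, smooth dependence of geodesics on data) by the $C^{1,1}$-results established above: the local causal structure via the exponential map (Th.\ \ref{lcb}), the characterisation of causal curves reaching $\partial J^+$ as null geodesics (Cor.\ \ref{boundary}, Cor.\ \ref{on}), the push-up lemmas (Prop.\ \ref{push-up1}, Prop.\ \ref{nonnull}), and the limit curve theorem (Th.\ \ref{accu}, valid already for $C^0$-metrics). First I would record that $D^\circ_I(S)$, being open in $M$, carries the restriction of $g$ as a $C^{1,1}$-Lorentzian metric and inherits the time-orientation, so it is a spacetime in its own right (treating connected components separately if necessary). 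Global hyperbolicity of $D^\circ_I(S)$ will then be verified in the form: $D^\circ_I(S)$ is strongly causal, and for all $p,q\in D^\circ_I(S)$ the causal diamond $J^+(p,D^\circ_I(S))\cap J^-(q,D^\circ_I(S))$ is compact.

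For strong causality, suppose it fails at some $p\in D^\circ_I(S)$. Feeding a sequence of causal curves that witness the failure into Th.\ \ref{accu} yields an inextendible causal curve $\gamma$ through $p$ which is imprisoned in a compact subset of $D^\circ_I(S)$. This contradicts the defining property of the domain of dependence, once one knows that every inextendible \emph{causal} curve meeting $D^\circ_I(S)$ must cross $S$, hence leave every compact subset. The latter is the point where the \emph{timelike} nature of $D_I(S)$ and the \emph{causal} nature of $\gamma$ have to be reconciled: if $\gamma$ contains a timelike segment it can be deformed, with the same endpoints, to a timelike curve by Prop.\ \ref{nonnull}, which then meets $S$ by definition of $D_I(S)$; if instead $\gamma$ is nowhere timelike, then by Cor.\ \ref{boundary} and Cor.\ \ref{on} it is a null geodesic, and achronality of $S$ together with the local description of $J^\pm$ in Th.\ \ref{lcb} again forces it to cross $S$. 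The absence of closed causal curves needed to upgrade non-imprisonment to strong causality follows from the same local analysis (Th.\ \ref{lcb}, Cor.\ \ref{boundary}).

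For compactness of the causal diamonds, given $p\le q$ in $D^\circ_I(S)$ I would first invoke the standard property of domains of dependence---whose proof uses only push-up and limit curves and hence is available here---that $J^+(p)\cap J^-(q)\subseteq D_I(S)$, and in fact lies in a compact subset of $D^\circ_I(S)$. Then, given a sequence $r_n$ in the diamond, each $r_n$ lies on a causal curve from $p$ to $q$ contained in that compact set; Th.\ \ref{accu} produces an accumulation causal curve from $p$ to $q$ on which the $r_n$ accumulate, and the limit point lies in $J^+(p)\cap J^-(q)$ because the relation $\le$ is locally closed in totally normal sets (established above) and hence globally closed by the usual chaining argument over a finite cover by totally normal neighbourhoods as in Cor.\ \ref{lipisc1}. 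This gives the desired compactness.

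The main obstacle is not any single estimate but the verification that every geometric input of Chrusciel's argument has a $C^{1,1}$-counterpart, and the genuinely delicate point is precisely the interplay between the timelike character of $D_I(S)$ and causal limit curves: this is bridged by Prop.\ \ref{nonnull} (push-up, valid even below $C^{1,1}$) and Cor.\ \ref{boundary} (causal curves on $\partial J^+$ are null geodesics). Once these, together with the limit curve theorem Th.\ \ref{accu} and the local causal picture Th.\ \ref{lcb}, are in hand, the smooth proof of \cite[Th.\ 2.9.9]{Chrusciel_causality} transfers with only cosmetic changes.
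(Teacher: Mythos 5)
Your proposal is correct and follows essentially the same route as the paper, which itself only indicates that the proof of Chru\'sciel's Theorem 2.9.9 ``can be adapted'' once the $C^{1,1}$-substitutes for the smooth machinery are in place. You identify exactly the ingredients the paper relies on for this adaptation --- the local causal structure of Theorem \ref{lcb}, the push-up results (Propositions \ref{push-up1} and \ref{nonnull}), the null-geodesic characterisation of Corollaries \ref{boundary} and \ref{on}, and the accumulation-curve Theorem \ref{accu} --- and you correctly flag the reconciliation of the timelike-based definition of $D_I(S)$ with causal limit curves as the one genuinely delicate point.
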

Note that here the definition of domains of dependence is based on timelike
curves, as is the definition of Cauchy horizons. Finally, the analogue
of \cite[Prop.\ 2.10.6]{Chrusciel_causality} establishes the existence of
generators of Cauchy horizons :
\begin{Proposition}
Let $g$ be a $C^{1,1}$-Lorentzian metric on $M$ and let $S$ be a
spacelike $C^1$-hypersurface in $M$. For any point $p$
in the Cauchy horizon $H^+_I(S)$ there exists a past directed null geodesic
$\alpha_p\subset H^+_I(S)$ starting at $p$ which either does not have an 
endpoint in $M$, or has an endpoint in $\bar{S}\setminus S$.
\end{Proposition}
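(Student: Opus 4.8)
The plan is to follow the $C^2$-proof of \cite[Prop.\ 2.10.6]{Chrusciel_causality} (the classical construction of Cauchy-horizon generators, cf.\ \cite{ON83}), replacing each smooth ingredient by its $C^{1,1}$-substitute developed above: the limit curve theorem (Thm \ref{accu}), which produces accumulation curves of sequences of causal curves; the description of local causality via $\exp_p$ (Thm \ref{lcb}, Cor.\ \ref{lipisc1}); the push-up lemmas (Prop.\ \ref{push-up1}, \ref{nonnull}) together with their consequences (Cor.\ \ref{boundary}, \ref{on}, and in particular the Proposition above that every achronal causal curve is a null geodesic); the openness of the relation $\ll$; and the global hyperbolicity of $D^\circ_I(S)$ (the preceding Theorem). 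As a preliminary we note that, $g\in C^{1,1}$ being causally plain (Prop.\ \ref{lipplain}), one has $\overline{D^+_I(S)}=D^+_I(S)\cup H^+_I(S)$, so that $H^+_I(S)$ is closed, and that $H^+_I(S)$ is achronal. For the achronality, if $x\ll y$ with $x,y\in H^+_I(S)$, choose $r_n\in D^+_I(S)$ with $r_n\to y$; since $\ll$ is open, $x\ll r_n$ for large $n$, so $x\in I^-(D^+_I(S))$, contradicting $x\in H^+_I(S)$.

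Now fix $p\in H^+_I(S)$. The generator through $p$ is obtained, just as in the smooth case, as an accumulation curve of causal curves approaching $p$: choose a sequence $q_i\to p$ with $q_i\in I^+(p)$ (so that $q_i\notin\overline{D^+_I(S)}$, because $I^+(p)\cap D^+_I(S)=\emptyset$ as $p\notin I^-(D^+_I(S))$, and $q_i\notin H^+_I(S)$ by achronality), and for each $i$ a past-inextendible causal (indeed timelike) curve $\gamma_i$ issuing from $q_i$ that does not meet $S$ --- such a curve exists precisely because $q_i\notin D^+_I(S)$. Applying Thm \ref{accu} we extract an accumulation curve $\alpha_p$ with $\alpha_p(0)=p$, past-directed and causal. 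As $q_i\to p\in H^+_I(S)$ the curves $\gamma_i$ are crowded against $\overline{D^+_I(S)}$, and $\alpha_p$ is correspondingly squeezed onto $H^+_I(S)$: that $\alpha_p\subseteq H^+_I(S)$ is checked by combining $\alpha_p\subseteq\overline{D^+_I(S)}=D^+_I(S)\cup H^+_I(S)$ with the fact that $\alpha_p$ avoids $S$ (meeting $\bar S$, if at all, only at a past endpoint in $\bar S\setminus S$), that $\alpha_p(0)\notin I^-(D^+_I(S))$, and the push-up lemmas, exactly as in \cite{Chrusciel_causality}. The limit-curve construction moreover produces the alternative stated: either $\alpha_p$ is past-inextendible in $M$, or it terminates at a point of $\bar S\setminus S$, since otherwise the $\gamma_i$ could be prolonged past a would-be interior past endpoint, contradicting maximality of the accumulation curve.

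With $\alpha_p\subseteq H^+_I(S)$ in hand, $\alpha_p$ is a causal curve lying in the achronal set $H^+_I(S)$, hence an achronal causal curve, and the Proposition above shows that, after reparametrisation, it is a past-directed null geodesic; this proves the claim. The step I expect to be the main obstacle is exactly the one that is already delicate in the smooth theory: marshalling the approximating causal curves $\gamma_i$ and controlling their limit so that the accumulation curve genuinely emanates from $p$, stays inside $H^+_I(S)$ (neither escaping $\overline{D^+_I(S)}$ nor dipping into the open set $I^-(D^+_I(S))$), and exhibits the inextendibility/endpoint dichotomy. Everything carries over to the $C^{1,1}$-setting because the two structural pillars of \cite{Chrusciel_causality} are in place --- the push-up lemmas (Prop.\ \ref{push-up1}, \ref{nonnull}) and the limit-stability of causal curves (Thm \ref{accu}) --- together with the global hyperbolicity of $D^\circ_I(S)$; no ingredient beyond those assembled in Sections 2 and 3 is required.
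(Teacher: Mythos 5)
Your proposal is correct and follows essentially the same route as the paper, which itself establishes this Proposition by transcribing the $C^2$-proof of \cite[Prop.\ 2.10.6]{Chrusciel_causality} using exactly the ingredients you list (the limit curve theorem, the push-up lemmas, achronality of $H^+_I(S)$, and the fact that achronal causal curves are null geodesics). Your identification of the delicate step---controlling the accumulation curve so that it emanates from $p$ and stays in $H^+_I(S)$---matches the level of care the original $C^2$-argument requires, and no further $C^{1,1}$-specific input is needed.
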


Based on these foundations, a deeper study of causality theory, in particular 
in the direction of singularity theorems for metrics of low regularity can
be undertaken. As detailed in \cite[Sec.\ 6.1]{Seno1}, this will require 
to solve a whole range of analytical problems that go beyond the results of this
paper, in particular concerning variational properties of curves, control of curvature
quantities, and a study of focal points. As already mentioned in the introduction,
we hope that the techniques developed in \cite{CleF,CG,KSS,M} as well as in this paper
can contribute to this task.

\medskip\noindent
{\bf Acknowledgements.}  We would like to thank James D.\ E. Grant for
helpful discussions.  The authors acknowledge the support of FWF
projects P23714 and P25326, as well as OeAD project WTZ CZ 15/2013.
We are indebted to the referees of this paper for several comments that
have led to substantial improvements in the presentation.

\end{document}